\documentclass[12pt, a4paper, reqno]{amsart}
\usepackage[utf8]{inputenc}
\usepackage{hyperref}
\hypersetup{unicode=true}
\usepackage{xurl}

\usepackage{color}
\usepackage[margin=1.1in,top=1.4in,bottom=1.4in]{geometry}
\usepackage{amsmath, amsthm, amssymb, amsfonts}
\usepackage{calc}
\usepackage{url}
\usepackage{enumitem}
\usepackage{booktabs}

\usepackage{cleveref}
\crefformat{section}{\S#2#1#3} 
\crefformat{subsection}{\S#2#1#3}
\crefformat{subsubsection}{\S#2#1#3}

\numberwithin{equation}{section}
\newtheorem{theorem}{Theorem}[section]

\newtheorem{lemma}[theorem]{Lemma}

\theoremstyle{definition}

\theoremstyle{remark}

\newenvironment{psmallmatrix}
  {\left(\begin{smallmatrix}}
  {\end{smallmatrix}\right)}

\title{Sets whose differences avoid a bracket quadratic}
\author{Khalid Younis}
\address{Mathematics Institute, Zeeman Building, University of Warwick, Coventry CV4
7AL, United Kingdom}
\email{younis.maths@outlook.com}

\begin{document}
\begin{abstract}
Suppose a set of integers \(A\subseteq\{1,\dots,N\}\) has no solutions to \(a-a'=n\lfloor\sqrt[3]2n\rfloor,\) for distinct   \(a,a'\in A,\) and \(n\in \mathbb{N}.\) We show  that \(|A|\ll N^{1-c}\) for some absolute constant \(c>0.\) To do this, we prove quantitative bounds on the van der Corput property for certain sets of bracket quadratics. This comes as a consequence of establishing exponential sum estimates for these sets, utilising a theorem  of Green and Tao on the quantitative equidistribution of polynomial orbits on nilmanifolds, closely following the approach of Neale who went on to prove a Waring-type result. We also extend our result to differences avoiding a  family of bracket polynomials (also known as generalised polynomials).
\end{abstract}

\maketitle

\section{Introduction}

A celebrated theorem of Furstenberg \cite{Furst} and S\'{a}rk\"{o}zy \cite{sarkozy} states that  subsets of integers \(A\subseteq\{1,\dots,N\}\) for which  every non-zero difference \(a-a'\) is not a  square, where \(a,a'\in A,\) must have size \(|A|=o(N)\) as \(N\rightarrow \infty.\)  Furstenberg  established this result using ergodic theory, whereas S\'{a}rk\"{o}zy employed Fourier analysis. The latter approach yielded quantitative bounds of \[|A|\ll N(\log \log N)^{2/3}(\log N)^{-1/3}.\]This was improved by  Pintz, Steiger, and Szemer\'{e}di \cite{pinstesze} to \[|A|\ll N(\log N)^{-(\log \log \log \log N)/12},\] then by Bloom and Maynard \cite{blomay} for a bound of \[[A|\ll N(\log N)^{-c\log \log \log N}\] for some constant \(c>0,\) and most recently by Green and Sawhney \cite{gresaw} to \[|A|\ll Ne^{-c\sqrt{\log N}}.\]  It remains an open problem to prove whether or not \(|A|\ll N^{1-c}.\) Lower bounds are discussed in \cite{gresaw}, for example, though we highlight that  Ruzsa \cite{ruz} constructed such a set of size  \(|A|\gg N^{0.73307\dots},\) which was  improved independently by Beigel and  Gasarch \cite{beigas} and by Lewko \cite{lewko}  to \(|A|\gg N^{0.73341\dots},\) with the current best bound of \(|A|\gg N^{0.75279\dots}\) due to Krachun \cite{krachun}.

There have since been many generalisations, including to differences not equal to polynomials \cite{adajaretal} and multivariate polynomials \cite{doylerice,fanlott}, and to avoiding inhomogeneous progressions \cite{PelPrend}, along with some   accompanying  lower bounds of the author \cite{younis}. See \cite{le,deanetal} for more background and related problems. 
Of particular relevance to us is the problem of avoiding differences equal to shifted primes \(p-1.\)
Forgoing the density increment strategy, Green \cite{greenprime}  achieved a bound of \(|A|\ll N^{1-c}\) by proving a quantitative \emph{van der Corput property} (which we discuss in detail later) for the set of shifted primes. Function field analogues of this result were subsequently proved \cite{fanlott2,kowalska}.

In much earlier work,  Rivat and  S\'{a}rk\"{o}zy  \cite{rivsar} attained a bound of \(|A|\ll N^{1-c}\) for differences avoiding \(\lfloor n^{d} \rfloor\) for suitable \(d\) (see the discussion in \cite[\S1]{okee}).  O'Keeffe \cite{okee} improved this constant \(c\)  for some range of \(d\) by investigating a quantitative van der Corput property, and also extended the result to functions with a similar growth rate to \(\lfloor n^{d} \rfloor.\) Here and throughout, the floor function is defined by \(\lfloor x\rfloor=\max\{y\in \mathbb{Z}: y\leq x\}.\)  Fan and Lott \cite{fanlott} proved a bound of  \(|A|\ll_\varepsilon N^{7/8+\varepsilon}\) for differences avoiding the sum of two squares, again by establishing a quantitative  van der Corput property. Results on the quantitative van der Corput property for squares exist, as in work of Slijep\v{c}evi\'{c}  \cite{slij}, but these are not strong enough to improve known bounds for sets lacking square differences. 

The goal of this article is to show one can achieve similar quantitative bounds for differences avoiding so-called \emph{bracket quadratics.} 
\begin{theorem}\label{mainthm1}
Suppose \(\beta\) and \(\beta^2\) are  real, irrational, and non-Liouville. Let \(N\) be a positive integer, and  let \(A\subseteq\{1,\dots,N\}\) be a set of integers with no solutions to \[a-a'=n\lfloor\beta n\rfloor\] with distinct \(a,a'\in A,\) and \(n\in \mathbb{N}.\) Then \[|A|\ll N^{1-c}\] for some  constant \(c=c(\beta)>0.\)
\end{theorem}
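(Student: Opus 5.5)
We follow Green's van der Corput strategy for shifted primes: rather than a density increment, we prove a quantitative van der Corput property for the set \(S=\{n\lfloor\beta n\rfloor:n\in\mathbb{N}\}\). Let \(H\) be a parameter, a small power of \(N\). The reduction step is to construct a non-negative trigonometric polynomial
\[T(\theta)=a_0+\sum_{1\le |h|\le H,\ h\in \pm S} a_h e(h\theta),\qquad T(0)=1,\quad T\ge 0,\]
with \(a_0\ll H^{-c'}\). Given such \(T\), suppose \(A-A\) avoids \(S\setminus\{0\}\) with \(H\le N\). Then
\[\int_{\mathbb{T}} |\widehat{1_A}(\theta)|^2 T(\theta)\,d\theta=a_0|A|+\sum_h a_h\,\#\{a-a'=h\}=a_0|A|.\]
Positivity of \(T\) and continuity of \(T\) near \(0\) give a lower bound \(\gg |A|^2/N\) for the left-hand side. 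Indeed, on an arc of length \(\asymp 1/N\) around \(0\) we have \(|\widehat{1_A}|\gg |A|\), and one smooths \(T\) by convolving with a Fejér kernel of scale \(N\), keeping \(T(0)\asymp 1\) on that arc since frequencies are at most \(H\le N\). Hence \(|A|\ll a_0N\ll N^{1-c}\), and choosing \(H=N\) completes the reduction. So the theorem reduces to a quantitative van der Corput bound for \(S\).

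To build \(T\), take the Fejér-type kernel
\[T(\theta)=\Big|\frac1M\sum_{m\le M}e(\theta\, m\lfloor\beta m\rfloor\cdot\ldots)\Big|^2,\]
or, following Green, weight the elements of \(S\) of size \(\asymp X\) with \(X\approx H\). Set \(F(\theta)=\frac1M\sum_{m\le M} w(m)\,e(\theta\, m\lfloor\beta m\rfloor)\), where \(w\) is a smooth bump. Put \(T=\operatorname{Re}\) of a normalised combination of \(F\) with a Fejér kernel to enforce positivity, as in Green's and O'Keeffe's arguments. Everything then comes down to two estimates. The first is a minor-arc bound: \(|F(\theta)|\ll M^{-c}\) unless \(\theta\) lies within \(M^{-2+c}\) of a rational \(a/q\) with \(q\le M^{c}\). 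The second is a major-arc asymptotic: near \(a/q\), \(F(\theta)\) approximately equals \(S(a,q)\cdot(\text{singular integral})\). Here \(S(a,q)\) is a complete sum of \(e(a\,m\lfloor\beta m\rfloor/q)\) averaged using the equidistribution of \(\{\beta m\}\) in residue classes. We need \(S(a,q)\ll q^{-\delta}\), and we also need the local factors to be positive so that the positive-definite combination has small constant term. This major-arc analysis mirrors Green's treatment of primes with the Ramanujan-type sums replaced.

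The main obstacle is the minor-arc estimate. Write \(m\lfloor\beta m\rfloor=\beta m^2-m\{\beta m\}\), so that \(e(\theta m\lfloor\beta m\rfloor)=e(\theta\beta m^2)\,e(-\theta m\{\beta m\})\). This is a bracket polynomial and can be realised as \(G(g(m)\Gamma)\) for a polynomial sequence \(g\) on a Heisenberg-type \(2\)-step nilmanifold, with \(G\) only piecewise smooth. First smooth the fractional part by approximating \(\{x\}\) with Lipschitz functions, discarding \(m\) with \(\{\beta m\}\) near \(0\); this is controlled by the non-Liouville hypothesis on \(\beta\), which gives discrepancy \(M^{-c}\) for \(\beta m\). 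Then apply the Green--Tao quantitative Leibman theorem. If \(|F(\theta)|\ge M^{-c}\), the orbit fails to be \(M^{-c}\)-equidistributed, so some horizontal character \(\eta\) with \(|\eta|\ll M^{O(c)}\) satisfies \(\|\eta\circ g\|_{C^\infty}\ll M^{O(c)}\). Concretely, integers \(k_1,k_2,k_3\) of size at most \(M^{O(c)}\), not all zero, make \(k_1\beta+k_2\theta\beta+k_3\theta\) (or the relevant quadratic coefficients) lie within \(M^{-2+O(c)}\) of an integer, with a similar condition on the linear coefficient. Following Neale's case analysis, the non-Liouville property of \(\beta\) and of \(\beta^2\) excludes every degenerate relation except \(\theta\) lying near a rational of small height. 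This is exactly the minor-arc conclusion. The delicate point, where I expect most of the work, is tracking the relations among \(\theta,\theta\beta\) forced by the horizontal character and showing that the non-Liouville hypotheses give polynomial rather than merely qualitative control.
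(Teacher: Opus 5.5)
Your overall framework (van der Corput property, Heisenberg nilmanifold, Green--Tao, Neale's method) matches the paper's, but the minor-arc step as you describe it would fail. With \(g_1(n)\) having coordinates \(x=\theta n\), \(y=\beta n\), the horizontal characters on \(G_1/\Gamma_1\) are \(k_1x+k_2y\). So a single application of the Green--Tao theorem only gives \(\Vert k_1\theta+k_2\beta\Vert_{\mathbb{R}/\mathbb{Z}}\ll\delta_1^{-O(1)}X^{-1}\), i.e.\ \(\theta=(a+b\beta)/q+t\) with \(t\ll\delta_1^{-O(1)}X^{-1}\). There is no \(\theta\beta\) term and no \(X^{-2}\) precision. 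Even if you work on a product with an extra torus, nothing guarantees that the character returned has a non-zero quadratic component.

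No Diophantine hypothesis can rule out \(b\neq 0\) at this stage: \(\theta\) is a free parameter, and for such \(\theta\) the orbit genuinely fails to equidistribute. You have to show that the sum is small there, and the missing idea is a descent:
\begin{itemize}
\item Factor \(g_1=\varepsilon_2g_2\gamma_2\), with \(\varepsilon_2\) slowly varying, \(\gamma_2\Gamma_1\) periodic mod \(q\), and \(g_2\) in the abelian subgroup \(G_2\) with entries \(bx,qx,y\).
\item Transport \(F_1\) to \(F_2(g\Gamma_2)=F_1(\varepsilon_2g\gamma_2\Gamma_1)\). This is well defined because \(\gamma_2^{-1}\Gamma_2\gamma_2\subseteq\Gamma_1\), and it integrates to zero over the central coordinate. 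Hence total equidistribution on \(G_2/\Gamma_2\) gives cancellation.
\item Otherwise a character \(k_3x+k_4(y-bqx^2/2)\) yields \(\Vert 2k_4(\beta t+b\beta^2/(2q))\Vert_{\mathbb{R}/\mathbb{Z}}\ll\delta_2^{-O(1)}X^{-2}\).
\end{itemize}
Only at this second stage does \(\beta^2\) enter. Combined with \(t\ll\delta_1^{-O(1)}X^{-1}\), it forces \(b=0\), and then \(t\ll\delta_2^{-O(1)}X^{-2}\). A third stage on the one-dimensional \(G_3\), which is equidistributed because \(\beta\) is non-Liouville, then yields the major-arc asymptotic.

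There are two further problems. First, your candidate \(T=|M^{-1}\sum_m e(\theta m\lfloor\beta m\rfloor)|^2\) has frequencies in \(S-S\), not \(S\). Combining \(F\) with a Fejér kernel also introduces frequencies outside \(S\cup\{0\}\). The paper simply takes uniform weights, \(T(\theta)=a_0+a\sum_{h\in S}\cos(2\pi h\theta)\) with \(a_0/a\approx X^{1-2c}\). Then \(T\geq 0\) reduces to the one-sided bound \(\Re\sum_{n\leq X}e(-\theta n\lfloor\beta n\rfloor)\geq -X^{1-c'}\). Since all coefficients are non-negative, no smoothing is needed in the reduction, and no decay \(S(a,q)\ll q^{-\delta}\) is needed either.

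Second, you leave unverified that the whole major-arc main term has positive real part, and this is what the argument actually needs. The local factor is \(\mathbb{E}_{j,m~(\textrm{mod } q)}e(-ajm/q)=1/q\) for \((a,q)=1\). The singular integral has real part \(X^{-1}\int_0^X\cos(2\pi|\beta t|v^2)\,\mathrm{d}v\gg\min\{1,(|\beta t|X^2)^{-1/2}\}\gg\delta_2^{O(1)}\), by a Fresnel-integral lower bound together with \(t\ll\delta_2^{-O(1)}X^{-2}\). You mention only the local factors and do not compute them, and you do not address the singular integral at all. This sign check is exactly what fails for squares (via Gauss sums), so it is the crux of the argument rather than a detail.
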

A real  number \(\beta\) is \emph{non-Liouville} when, for some \(m=m(\beta)>0\) and  \(M=M(\beta)\geq 1,\) the inequality 
\[0<\left|\beta-\frac{a}{q}\right|<\frac{m}{q^M}\] has no solutions with integers \(a,q\) such that  \(q\geq 1.\) In other words, \(\beta\) cannot be approximated by different rationals at an arbitrary polynomial rate. Given \(\beta,\) the infimum of such \(M\) is called the \emph{irrationality exponent}. One can verify that all rationals are non-Liouville. A classical theorem of Liouville states that every irrational algebraic number is non-Liouville, and in fact  Roth's theorem shows that any \(M>2\) is permissible  provided that \(m\) also depends on \(M\). For instance, in Theorem \ref{mainthm1} one may take  \(\beta=2^{1/3}\) or \(\beta=\sqrt{2}+1\) (in which case  \(n\lfloor(\sqrt{2}+1) n\rfloor=n\lfloor\sqrt{2} n\rfloor+n^2\)). It is perhaps worth mentioning that the set of Liouville numbers has measure zero, and in fact has Hausdorff dimension zero by Jarn\'{i}k's theorem.

Our result is the first to demonstrate any quantitative bounds on the size of \(A.\) Via ergodic theory, Bergelson and H{\aa}land \cite{berghal}  reached the qualitative conclusion that \(|A|=o(N)\) for a family of bracket polynomials including that in Theorem \ref{mainthm1}.  Related problems involving more general bracket polynomials are handled qualitatively by H\aa land Knutson and MCutcheon \cite{berghalmcc}, and  Bergelson, H\aa land Knutson, and Son \cite{berghaletal}. The ergodic approach is discussed in \cite{le}, for example.  

We have an accompanying lower bound to Theorem \ref{mainthm1} (and Theorem \ref{polythm}  below) of \(|A|\gg N^{1/2}.\)  Indeed,  for a general set \(S\) of non-zero integers,  if \(A\subseteq\{1,\dots , N\}\) is  maximal such that \(a-a'\not\in S\) for all \(a,a'\in A,\) then  maximality implies \(\bigcup_{a\in A} (\{a\} \pm S)\supseteq \{1,\dots, N\},\) and so   by the  union bound  \(2|A||S|\geq \sum_{a\in A}|\{a\}\pm S|\geq |\bigcup_{a\in A} (\{a\} \pm S)|\geq N.\) A construction with a more refined implicit constant is given in \cite[\S{B.2}]{lyall}.

Crucial to our analysis are ideas of Neale \cite{neale} who proved a version of Waring's problem for bracket quadratics \(n\lfloor \sqrt{2} n\rfloor,\) that is, there exists some integer \(s\) such that every sufficiently large number is expressible as the sum of \(s\) of these bracket quadratics. Neale even obtained an asymptotic formula for the count of such solutions. To do this, Neale used the nilpotent Hardy--Littlewood circle method, establishing bounds for exponential sums with a bracket quadratic phase using a theorem of Green and Tao \cite{GT} on the quantitative equidistribution of polynomial sequences on nilmanifolds. One obtains a power saving on the minor arcs, and an explicit formula (up to an error term) on the major arcs.  Wooley \cite{wooltalk} obtained bounds for the moments of exponential sums for bracket quadratics via a variant of Hua's lemma, and also handled moments for other bracket polynomials utilising discrete restriction variants of Vinogradov's mean value theorem; the former significantly reduced the number of variables needed in the Waring-type problem of Neale. 

We also prove a version of our result for a more general collection of bracket polynomials. Here and throughout, \(\mathbb{L}'\subseteq\mathbb{R}\) denotes the set of  non-Liouville numbers. Recall that \(\mathbb{Q}\subseteq\mathbb{L}'\).
\begin{theorem}\label{polythm}
Let \(\ell \in \mathbb{Z}\) and \(Q\in \mathbb{Q}[n]\) be such that \(Q:\mathbb{Z}\rightarrow \mathbb{Z}.\) Suppose \(R,R^2\in \mathbb{L}'[n]\setminus\mathbb{Q}[n].\)  Assume \(\deg Q, \deg R\geq 1\) and  \(Q(0)=R(0)=0.\)  Let \(N\) be a positive integer, and let \(A\subseteq\{1,\dots,N\}\) be a set of integers with no solutions to \[a-a'=Q(n)\lfloor R(n)+\ell \rfloor\] with distinct \(a,a'\in A,\) and \(n\in \mathbb{N}.\) Then \[|A|\ll N^{1-c}\] for some  constant \(c=c(Q,R,\ell)>0.\)

The same result holds with \(R^2\in \mathbb{Q}[n]\) instead of \(R^2\in \mathbb{L}'[n]\setminus\mathbb{Q}[n],\)  provided that \(R\) is not a constant multiple of \(Q.\)
\end{theorem}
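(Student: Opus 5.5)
We follow the strategy the introduction outlines for Theorem \ref{mainthm1}: reduce to a quantitative van der Corput property for the set \(S=\{s_n:=Q(n)\lfloor R(n)+\ell\rfloor : n\in\mathbb{N}\}\), and deduce that property from exponential sum estimates obtained via Green--Tao. First, the exponential sums. Since \(R(0)=0\) and \(\ell\) is an integer, \(\lfloor R(n)+\ell\rfloor=\lfloor R(n)\rfloor+\ell\). So we may treat \(s_n=Q(n)\lfloor R(n)\rfloor+\ell Q(n)\), and the term \(\ell Q(n)\) is an honest polynomial phase. Write \(\lfloor R(n)\rfloor=R(n)-\{R(n)\}\). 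Then
\[e(\alpha s_n)=e\bigl(\alpha Q(n)R(n)+\alpha\ell Q(n)\bigr)\,e\bigl(-\alpha Q(n)\{R(n)\}\bigr).\]
This is a polynomial nilsequence \(F(g(n)\Gamma)\) on the Heisenberg-type nilmanifold, with \(g(n)\) built from \(\alpha Q(n)\) and \(R(n)\), and \(F\) only piecewise Lipschitz. Following Neale, we smooth \(F\) at scale \(N^{-\delta}\) and control the discontinuity set by equidistribution of \(R(n)\bmod 1\).

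The key estimate is the following dichotomy. Either
\[\Bigl|\sum_{n\le X}e(\alpha s_n)\Bigr|\le X^{1-c},\]
or \(\alpha\) lies in a major arc \(|\alpha-a/q|\le X^{-\deg(QR)+\varepsilon}\) with \(q\le X^{\varepsilon}\). To prove it, assume the sum is large. Green--Tao then supplies a horizontal character \(\eta\) with \(|\eta|\ll X^{O(\delta)}\) such that \(\|\eta\circ g\|_{C^\infty}\ll X^{O(\delta)}\). The horizontal coordinates are the coefficients of \(\alpha Q\) and of \(R\), so this yields simultaneous small-denominator Diophantine conditions of the form
\[\|k_1\cdot(\text{coeffs of }\alpha Q)+k_2\cdot(\text{coeffs of }R)\|\ \text{small}.\]
We split into cases according to which coefficients appear in \(\eta\). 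If \(\eta\) involves \(R\) nontrivially, the non-Liouville hypothesis on \(R\notin\mathbb{Q}[n]\) contradicts the condition at polynomial scale. In the case \(R^2\in\mathbb{Q}[n]\), the assumption that \(R\) is not a multiple of \(Q\) plays the same role. Otherwise \(\eta\) only sees \(\alpha Q\), and we obtain a rational approximation to \(\alpha\).

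One more case remains. When \(\alpha\) is rational and close to \(a/q\), the vertical (commutator) direction is what drives things. There we need a genuine evaluation: the sum approximately factors as a Gauss-type sum times an integral. To get this we need equidistribution of \((R(n),R(n)^2)\) modulo \(1\) along progressions mod \(q\), and that is precisely where \(R^2\in\mathbb{L}'[n]\setminus\mathbb{Q}[n]\) (or the alternative hypothesis) enters.

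Next we establish the quantitative van der Corput property. For \(H\approx N^{c_0}\) we build a nonnegative cosine polynomial
\[T(\theta)=a_0+\sum_{h\le H}a_h\cos(2\pi s_h\theta),\qquad T(0)=1,\quad T\ge0,\quad a_0\ll N^{-c}.\]
Following Green's treatment of shifted primes, we take \(T\) to be a normalised Fejér-type average \(\bigl|\sum_{n}w_ne(\theta s_n)\bigr|^2\)-like weight, or more simply \(1+\mathrm{Re}\) of a weighted average of \(e(s_n\theta)\) restricted to \(n\) divisible by \(q_0=\mathrm{lcm}(1,\dots,W)\). The restriction to multiples of \(q_0\) forces major-arc contributions to be nonnegative, exactly as in Rivat--Sárközy and O'Keeffe. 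On minor arcs the power saving above gives \(|T-a_0|\) small, and on major arcs the explicit main term is nonnegative up to \(O(N^{-c})\). Combining these gives \(T\ge0\) after adding \(a_0\ll N^{-c}\).

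The final step is standard. If \(A\) avoids \(S\)-differences, then
\[0=\sum_{h}a_h\,\#\{a-a'=s_h\}=\int|\hat1_A|^2(T-a_0)\ge -a_0\int|\hat1_A|^2,\]
while \(|\hat1_A(0)|^2=|A|^2\) together with \(T(0)=1\) gives \(|A|^2\ll a_0N|A|\). Hence \(|A|\ll N^{1-c}\).

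The main obstacle is the major arc analysis for the bracket phase: handling \(\alpha\) near \(a/q\) uniformly in \(q\le N^{\varepsilon}\), and ensuring the main term is nonnegative. For this we would first try to reduce, via Fourier expansion of \(\{R(n)\}\) with the smoothed sawtooth and the non-Liouville equidistribution estimates, to complete sums mod \(q\) whose positivity after the \(q_0\)-restriction can be read off directly.
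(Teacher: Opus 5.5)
\textbf{There is a genuine gap in your minor-arc dichotomy.} You claim that if the Green--Tao character $\eta$ involves the $R$-coordinate, then non-Liouvilleness of $R$ gives a contradiction. That is true only when $\eta$ does not also involve $\alpha Q$. The paper uses non-Liouvilleness of $R$ to show $k_1\neq 0$, not $k_2=0$. When $k_1\neq 0\neq k_2$, the conditions $\Vert k_1\alpha q_i+k_2r_i\Vert_{\mathbb{R}/\mathbb{Z}}\ll\delta_1^{-O(1)}X^{-i}$ just say that $\alpha$ is near a point of the shape $(a+b\beta)/q$, as in \eqref{thetamulti}; there is no Diophantine tension.

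Concretely, take $Q(n)=n$, $R(n)=\sqrt2 n$, $\ell=0$ and $\alpha=2\sqrt2$. A character $k_1x+k_2y$ with $k_2\neq0$ annihilates $g(n)$ identically. Moreover $2\sqrt2\,n\lfloor\sqrt2 n\rfloor\equiv-\{\sqrt2 n\}^2\pmod 1$, so the average of $e(\alpha s_n)$ tends to $\int_0^1e(-x^2)\,\mathrm{d}x\neq0$. Yet $\sqrt2 n\in\mathbb{L}'[n]\setminus\mathbb{Q}[n]$, and $2\sqrt2$ lies in none of your rational arcs. So no argument that uses only $R$ at this level can produce your dichotomy.

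The missing idea is the paper's second descent. For $\alpha$ satisfying \eqref{thetamulti}--\eqref{thetapolyform}, one factorises $\widetilde{g_1}=\widetilde{\varepsilon_2}\widetilde{g_2}\widetilde{\gamma_2}$ and tests $\widetilde{g_2}$ on $G_2/\Gamma_2$. The horizontal characters there see the term $\frac{k_1k_2}{2}(R(n)/k_1)^2$. Lemma \ref{squarecoeff} supplies an irrational, non-Liouville coefficient of $R^2$ of degree exceeding $\deg R$, and \eqref{tstar} then forces $k_2=0$.

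The same step is what narrows the rational arcs from width $X^{-\deg Q+o(1)}$ (all the first level gives) to the width $X^{-\deg Q-\deg R+o(1)}$ that you assert without proof. That narrower width is needed to freeze the phase $tQ(n)R(n)$ on short progressions. Conversely, the hypothesis on $R^2$ is not used where you put it. On the rational arcs the paper needs only total equidistribution of the one-dimensional orbit $R(n)/q$ in $G_3/\Gamma_3$, which depends on the coefficients of $R$ alone. Your remark for the case $R^2\in\mathbb{Q}[n]$ is essentially right: there, $k_2\neq0$ at the first level does force $R$ to be a multiple of $Q$, as in Lemma \ref{majorbracketlemmaii}.

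\textbf{Your positivity step also needs repair, though the fix is easy.} Restricting to $n\equiv0\pmod{\mathrm{lcm}(1,\dots,W)}$ cannot make every modulus $q\le X^{\varepsilon}$ divide the lcm, since keeping the lcm below $X$ forces $W\ll\log X$. No such device is needed anyway. With $P=-\ell Q$, the local factor in Lemma \ref{majorbracketlemma} is
\[
\mathbb{E}_{j,m}\,e\bigl(\tfrac aq(P(j)-Q(j)m)\bigr)=\mathbb{E}_{j}1_{aQ(j)\in q\mathbb{Z}}\geq 1/(hq),
\]
which is positive for every $q$. So the uniform weights $a_h$ from the introduction already suffice. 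What you do need, and omit, is that the archimedean factor $\frac1X\int_0^Xe(tP(v)-tQ(v)R(v))\,\mathrm{d}v$ has nonnegative real part. The paper gets the stronger bound $\gg\delta_2^{O(1)}$ from Fresnel-type estimates for $\int_0^Y\cos(2\pi w^{d})\,\mathrm{d}w$ with $d=\deg Q+\deg R$.
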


\subsection{The van der Corput property}
For sets \(S\subseteq \{-N,\dots, N\}\setminus\{0\}\) (formally sequences of sets for increasing \(N\)) suppose one can construct a cosine polynomial  \[T(\theta)=a_0+\sum_{h\in S}a_h\cos (2 \pi \theta h)\] such that \(a_0,a_h\in \mathbb{R},T(0)=1, T(\theta)\geq 0\) for all \(\theta\in \mathbb{R},\) and \(a_0\rightarrow 0\) as \(N\rightarrow \infty.\) Then \(S\) is said to satisfy the  \emph{van der Corput property}. This is a strictly stronger property than \(S\) being  \emph{intersective}, which means that \(|A|=o(N)\) for any set \(A\subseteq\{1,\dots, N\}\) whose differences avoid \(S.\)  A more thorough discussion of these properties is given in \cite{greenprime, le} and \cite[\S{2}]{mont2}

 Having the van der Corput property implies being intersective. To prove this claim,  consider \[\int_0^1\left|\sum_{a\in A}e(\theta a)\right|^2T(\theta)~\mathrm{d}\theta.\]
 On the one hand, by the hypothesised property on \(A,\) this equals 
 \[a_0\int_0^1\left|\sum_{a\in A}e(\theta a)\right|^2~\mathrm{d}\theta+\Re \int_0^1\left|\sum_{a\in A}e(\theta a)\right|^2\sum_{h\in S}a_h e(\theta h)~\mathrm{d}\theta=a_0|A|.\]
 On the other hand, since the integral is non-negative, we can lower bound it by the contribution near the origin, so that it is at least
 \[\int_{-1/(10N)}^{1/(10N)}\left|\sum_{a\in A}e(\theta a)\right|^2T(\theta)~\mathrm{d}\theta\gg \frac{|A|^2T(0)}{N}= \frac{|A|^2}{N},\] since \(\Re e(\theta n)=\cos(2\pi \theta n)\geq 1/2,\) say, when \(|\theta|\leq 1/(10N)\) and \(|n|\leq N.\)
 Together these imply that \[|A|\ll a_0 N,\] thus proving the claim. Bourgain \cite{bourgrec} proved by a construction  that the reverse implication does not hold in general. 
 
 To prove Theorem \ref{mainthm1}, it therefore suffices to show that such a \(T\) exists for the set of bracket quadratics 
 \[S=\{h=n\lfloor \beta n\rfloor: -N\leq h\leq N, n\in \mathbb{N}\}\setminus\{0\},\]  
  where \(a_0\ll N^{-c}.\) 
We may assume that \(N\) is sufficiently large in terms of \(\beta\) because otherwise Theorem \ref{mainthm1} holds trivially upon adjusting \(c.\) Observe that 
\[S=\{n\lfloor \beta n\rfloor: Y < n\leq X\},\] for some integers \(X=\sqrt{N/|\beta|}+O(1)\) and \(0\leq Y \ll 1/|\beta|\) (which ensure the floor function is non-zero).
 Then \(|S| =X-Y=\sqrt{N/|\beta|}+O(1/|\beta|)+O(1).\)  

 For a suitably small constant \(c>0,\) choose 
 \[a_0=\frac{(N/|\beta|)^{1/2-c}}{(N/|\beta|)^{1/2-c}+|S|} \quad  \textrm{ and } \quad a_h=\frac{1}{(N/|\beta|)^{1/2-c}+|S|} .\] This choice clearly satisfies all the required properties of our cosine polynomial, except possibly \(T(\theta)\geq 0,\)  which will be the main focus of this paper. Following the method of Neale, we prove the following.
 \begin{lemma}\label{nonnegative} Suppose \(\beta\) and \(\beta^2\) are real, irrational, and non-Liouville.  There is  some  constant \(c'=c'(\beta)>0\) such that for all \(\theta\in \mathbb{R}\) and all \(X\)  sufficiently large in terms of \(\beta,\) we have
\[\Re \sum_{1\leq n\leq X}e(-\theta n\lfloor \beta n \rfloor)\geq -X^{1-c'}.\] 
 \end{lemma}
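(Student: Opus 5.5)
Following Neale, I will use a major/minor arc dichotomy driven by the Green--Tao quantitative equidistribution theorem for polynomial orbits on nilmanifolds. Write \(\lfloor \beta n\rfloor = \beta n - \{\beta n\}\), so that
\[
\theta n\lfloor\beta n\rfloor = \theta\beta n^2 - \theta n\{\beta n\}.
\]
The phase \(e(-\theta n\lfloor \beta n\rfloor)\) is then a bracket quadratic. It can be realised as \(F(g(n)\Gamma)\) for the \(3\)-dimensional Heisenberg nilmanifold \(G/\Gamma\), a polynomial sequence \(g(n)\) built from \(\theta\beta\), \(\theta\) and \(\beta\), and a function \(F\) that is Lipschitz away from the boundary of a fundamental domain. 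The discontinuity of \(\{\cdot\}\) at integers is handled by smoothing: approximate \(F\) by a Lipschitz function \(\tilde F\) with Lipschitz constant \(X^{O(\delta)}\). The error is controlled by the proportion of \(n\le X\) with \(\|\beta n\|\le X^{-\delta}\), which is \(\ll X^{-\delta}\) once \(X\) is large, by equidistribution of \(\beta n\) with a rate coming from the non-Liouville hypothesis on \(\beta\).

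\emph{Minor arcs.} If the orbit \((g(n)\Gamma)_{n\le X}\) is \(X^{-c}\)-equidistributed, then \(\sum_{n\le X}\tilde F(g(n)\Gamma)=X\int\tilde F+O(X^{1-c''})\). The integral \(\int \tilde F\) is essentially zero: the mean of the vertical character vanishes, or in the degenerate case it is a nonnegative real quantity. Either way the real part of the sum is at least \(-X^{1-c'}\). If the orbit is not equidistributed, the Green--Tao theorem produces a nontrivial horizontal character \(\eta\) with \(|\eta|\le X^{O(c)}\) and \(\|\eta\circ g\|_{C^\infty[X]}\le X^{O(c)}\). Unwinding this gives integers \(q_1,q_2,q_3=X^{O(c)}\), not all zero, such that
\[
\|q_1\theta\beta+q_2\beta+q_3\theta\|\le X^{O(c)}/X^{j}
\]
for the appropriate degrees \(j\in\{1,2\}\). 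This is the step where the irrationality and non-Liouville hypotheses on \(\beta\) and \(\beta^2\) enter: they force the resulting relation to pin \(\theta\) very close to a rational \(a/q\) with \(q\le X^{O(c)}\), i.e.
\[
|\theta-a/q|\le X^{-2+O(c)}.
\]
For instance, a relation among the coefficients of \(n^2\) would otherwise give a good rational approximation to \(\beta\) or \(\beta^2\).

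\emph{Major arcs.} Now let \(\theta=a/q+\xi\) with \(q\) and \(X^2|\xi|\) both at most \(X^{O(c)}\). Since \(\lfloor\beta n\rfloor\) is an integer, \(e(-(a/q)n\lfloor\beta n\rfloor)\) depends only on the residues of \(n\) and \(\lfloor\beta n\rfloor\) modulo \(q\). Split \(n\le X\) into residue classes \(n\equiv r\pmod q\), and further by the residue of \(\lfloor\beta n\rfloor\) modulo \(q\). The latter is determined by whether \(\{\beta n/q\}\) lies in certain intervals, and \(\beta n/q\) equidistributes jointly along each progression with a power-saving rate, using that \(\beta\) is irrational and non-Liouville. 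Handling the slowly varying phase \(e(-\xi n\lfloor\beta n\rfloor)\) by partial summation over blocks, the sum becomes
\[
\Big(\frac{1}{q^2}\sum_{r,s\bmod q}e(-ars/q)\Big)\sum_{n\le X}e(-\xi\beta n^2)+O(X^{1-c'}).
\]
The inner quantity \(\frac1q\#\{(r,s):rs\equiv 0\}\cdot\frac1q\) equals \(\frac1{q^2}\sum_{r}q\,1_{q\mid ar}\cdot\), which is real and nonnegative. It remains to show \(\Re\sum_{n\le X}e(-\xi\beta n^2)\ge -X^{1-c'}\). This is the main obstacle, since an oscillatory integral \(\int_0^X e(-\xi\beta t^2)\,dt\) is not obviously nonnegative in real part. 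I would handle it by noting that its real part is a Fresnel integral \(\int_0^X\cos(2\pi|\xi\beta|t^2)\,dt\), whose value is \(\gg\min(X,|\xi\beta|^{-1/2})\) up to an error \(O(1/(|\xi| X))\). Thus it is nonnegative up to an acceptable error, because the partial Fresnel integrals \(\int_0^Y\cos(u^2)\,du\) are positive for all \(Y>0\). Combining the two cases with \(c'\) small in terms of the Green--Tao exponents completes the argument.
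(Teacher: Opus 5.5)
Your smoothing step, the elementary major-arc evaluation and the Fresnel positivity argument are fine. They match Sections~\ref{sectiong3} and~\ref{sectionpositive} of the paper in substance: the paper's one-dimensional subnilmanifold $G_3/\Gamma_3$ encodes exactly your equidistribution of $\beta n/q$ along progressions mod $q$.

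\textbf{The gap.} It lies in the minor-arc step. You claim that failure of equidistribution on the Heisenberg nilmanifold already pins $|\theta-a/q|\le X^{-2+O(c)}$, but no mechanism in your setup can produce that.
\begin{itemize}
\item Horizontal characters on the Heisenberg group vanish on $[G,G]$, which is the central $z$-coordinate. So $\eta(g(n))=k_1\theta n+k_2\beta n$ sees only the $x$- and $y$-coordinates, and these are linear in $n$.
\item Hence no $n^2$-coefficient, no $\theta\beta$ and no $\beta^2$ can appear, whatever you put in the $z$-entry.
\item One application of Green--Tao therefore gives only $\Vert k_1\theta+k_2\beta\Vert_{\mathbb{R}/\mathbb{Z}}\ll X^{-1+O(c)}$. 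Non-Liouville $\beta$ then forces $k_1\neq0$, so $\theta=(a+b\beta)/q+t$ with $b,q\le X^{O(c)}$ and $t\ll X^{-1+O(c)}$ (Lemma~\ref{delta1equid}).
\item For such $\theta$ your major-arc analysis fails. The phase still contains the genuinely quadratic bracket term $(b/q)\beta n\lfloor\beta n\rfloor$, which is not determined by residues mod $q$. Also $t\beta n^2$ is an oscillating quadratic phase, not a slowly varying weight.
\item Carrying $\theta\beta n^2$ in a separate torus factor, so that a character sees an $n^2$-coefficient, does not help. For $\theta=1/\beta$ that factor is constant, so the orbit is not equidistributed. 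Yet $\theta$ is far from every $a/q$ with $q\le X^{O(c)}$, since $1/\beta$ is non-Liouville.
\end{itemize}

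\textbf{The missing idea: a second descent.} This is what the paper does in Section~\ref{sectiong2}.
\begin{itemize}
\item Factor $g_1(n)=\varepsilon_2(n)g_2(n)\gamma_2(n)$. Here $\varepsilon_2$ is smooth (it comes from $t$), $\gamma_2(n)\Gamma_1$ is periodic mod $q$ (it comes from $a/q$), and $g_2$ takes values in the two-dimensional abelian subgroup $G_2$.
\item If $g_2$ is totally $\delta_2$-equidistributed on $G_2/\Gamma_2$, the sum still cancels (Lemma~\ref{totalequidg2}). One freezes $\varepsilon_2$ and $\gamma_2$ on short progressions mod $q$, and the integral over the central coordinate vanishes.
\item Otherwise Green--Tao on $G_2/\Gamma_2$ gives a character $k_3x+k_4(y-bqx^2/2)$, which does see the centre because $G_2$ is abelian.
\item Its $n^2$-coefficient is $-2k_4(\beta t+b\beta^2/(2q))$. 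Non-Liouville $\beta$ forces $k_4\neq0$, and then non-Liouville $\beta^2$ forces $b=0$.
\item Only after that does one get $t\ll X^{-2+O(c)}$ (Lemma~\ref{delta2equid}).
\end{itemize}
This second stage is the only place the hypothesis on $\beta^2$ enters. Your proposal asserts its conclusion without supplying it, so the minor-arc/major-arc dichotomy you rely on is not established.
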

 {This is enough to prove \(T(\theta)\geq 0,\) and hence Theorem \ref{mainthm1}.  It is clear that Theorem \ref{polythm} follows from an analogous result, which we prove in Section \ref{bracketpolysection}. Further details regarding the constants \(c\) and \(c'\) appearing in these results  can be found in Sections \ref{sectionpositive} and \ref{bracketpolysection}. 
 \subsection{Major and minor arcs}
 It turns out that the major arcs for the relevant exponential sum are centred around rationals of small denominator. We show that the exponential sum is small in absolute value (by a power saving), or else \(\theta\approx a/q\)  and \(q\geq 1\) not too large. In the latter case, 
 \[\sum_{1\leq n\leq X}e(-\theta n\lfloor \beta n \rfloor)\approx X\mathbb{E}_{j,m ~(\textrm{mod } q)}e\left(\frac{-ajm}{q}\right).\] The average over residues modulo \(q\) corresponds to the fact that \(n\) and \(\lfloor \beta n\rfloor\) are `independent' modulo \(q.\) Fortunately, this average is easily seen to equal \(1/q\) when  \((a,q)=1,\) and in particular points in the positive real direction. This is contrasted with the case of squares, where we have the Gauss sum \[X\mathbb{E}_{j~(\textrm{mod } q)}e\left(\frac{-aj^2}{q}\right),\]  which may well point in the negative real direction. For example, if \(q=5\) then this average equals \(1/\sqrt{5}\) at \(a=1,\) and \(-1/\sqrt{5}\) at \(a=2.\) As observed by  H{\aa}land  \cite{hal1,hal3} in a qualitative setting, and Neale \cite{neale} in both a qualitative and  quantitative setting, the  major arcs in this problem change dramatically depending on Diophantine properties of \(\beta\) and \(\beta^2,\) and are not always centred around rationals of small denominator as one might expect.  When \(\beta^2\) does not satisfy our hypotheses, for instance if  \(\beta =\sqrt{2},\) one arrives at major arcs at \(\theta\approx(a+b\sqrt{2})/q\) with \(q\geq 1\) such that \(b\) and \(q\) are not too large,  and as Neale \cite{neale} established one instead has
 \begin{equation}\label{root2main}  X \mathbb{E}_{j,m ~(\textrm{mod } 2q)
}e\left(\frac{-ajm}{q}-\frac{b}{2q}(2j^2+m^2)\right)\int_0^1 e\left(\frac{b}{2q}x^2\right) ~\mathrm{d}x.\end{equation}

 For some intuition as to why quadratic irrationals such as \(\beta=\sqrt{2}\) present such different behaviour,  observe that
\begin{align*}\{\sqrt 2n\}^2 &=\left(\sqrt 2 n-\lfloor \sqrt{2}n \rfloor \right)^2\\
&=2n^2+\lfloor \sqrt{2}n \rfloor^2-2\sqrt{2}n\lfloor \sqrt{2}n \rfloor \\
&\equiv - 2\sqrt{2} n \lfloor \sqrt{2} n \rfloor \pmod 1.
\end{align*}
According to H\aa land \cite{hal1,hal3}, this observation is due to  Ruzsa. It appears to have been independently noted in comments by Tao \cite{taoblog}.
 If we believe that \(\left(\{\sqrt 2 n\}\right)_{n\in \mathbb{N}}\) is equidistributed in \(\mathbb{R}/\mathbb{Z},\) then  \(\left(\{\sqrt{2} n\}^2\right)_{n\in \mathbb{N}}\) is not. 
  In particular, \(2\sqrt 2 n\) and \(\lfloor \sqrt 2 n\rfloor\)  do not behave `independently'.  For \(\beta\in \mathbb{R}\setminus\mathbb{Q},\) H\aa land \cite{hal1} established that \((\theta n \lfloor \beta n \rfloor \mod 1)_{n\in \mathbb{N}}\)  is equidistributed in \(\mathbb{R}/\mathbb{Z}\) if and only if either \(\beta^2\in \mathbb{Q}\) and \(1,\theta,\beta\) are linearly independent over \(\mathbb{Z},\) or \(\beta^2\not \in \mathbb{Q}\) and \(\theta\not \in \mathbb{Q}\) (see also the proof in \cite[\S 6]{neale} and the discussion in \cite[\S1, \S5]{hal3}). As well as works previously mentioned, qualitative equidistribution results for other bracket polynomials (also called generalised polynomials) include \cite{veech,peres,hal3,hal1,Hal2,halknut,bergleib}.
  
 We closely follow the  nilpotent approach of Neale for evaluating exponential sums.  In addition to the natural changes to the analysis of horizontal characters due to a different Diophantine condition on \(\beta^2\) (see Sections \ref{sectiong2} and \ref{sectiong3}),  we  cover major and minor arc estimates for   exponential sums of more general bracket polynomials  coming from the Heisenberg nilmanifold (see Section \ref{bracketpolysection}),  which presents further technical challenges. We remark that a somewhat simpler quantitative equidistribution result for a similar  family of  bracket polynomials to ours is also handled in \cite[\S 6]{pand}, and that the result corresponding to \eqref{root2main} for general \(\beta\in \mathbb{R}\setminus \mathbb{Q}\) such that \(\beta^2 \in \mathbb{Q}\) is presented in \cite{dask} (as well as Lemma \ref{majorarcrootestimate} below). Further differences between our treatment and that of Neale include our introduction of Sobolev theory to  estimate Lipschitz norms (see Section \ref{nilbackground}) and our means to avoid the need for `conjugate nilmanifolds'  (see Section \ref{sectiong2}).   A more detailed introduction to our problem  is presented by the author in \cite[\S 1]{ younisthesis}.

\subsection*{Notation} 
We adopt standard asymptotic notation. We write \(f\ll g\) or \(f=O(g)\) to mean \(|f|\leq C g\) for some constant \(C>0.\) Let \(f\asymp g\) signify \(f\ll g \ll f.\) When the implicit constant depends on some parameter, this is indicated by a subscript.  For some parameter tending to infinity (which should be clear from context) we also write \(f=o(g)\) if \(f/g\rightarrow 0,\) and \(f\sim g\) if \(f/g\rightarrow 1.\)

  We write \(e(\theta)=e^{2\pi i \theta}.\)   Given a set or event \(E,\) we write \(1_E\) for its indicator function.  The notation \(\Vert x \Vert_{\mathbb{R}/\mathbb{Z}}\) denotes the distance from \(x\) to the nearest integer. The floor function is defined by \(\lfloor x\rfloor=\max\{y\in \mathbb{Z}: y\leq x\}\) and the ceiling function by \(\lceil x\rceil=\min\{y\in \mathbb{Z}: y\geq x\}.\) For the greatest common divisor of \(a\) and \(b\) we write \((a,b).\)  For sets \(A\) and \(B,\) the sumset is denoted \(A+B=\{a+b: a\in A, \ b\in B\}.\) 
We also use the shorthand \([N]=\{1\leq n\leq N: n\in \mathbb{Z}\}.\) For a non-empty set \(S,\)  the notation \(\mathbb{E}_{n\in S}\) means the normalised sum \(|S|^{-1}\sum_{n\in S},\) and we may abbreviate this to \(\mathbb{E}_n\) when the set \(S\) is clear from context. We also write \(S[n]\) for the set of polynomials in \(n\) whose coefficients are in \(S,\) even when \(S\) is not a ring. 
   
 \subsection*{Acknowledgements}
 The author is thankful for the  guidance of Sam Chow. The author is also grateful to Vicky Neale for sharing her thesis and for her encouragement, to Joel Moreira for pointing out some relevant papers in the ergodic theory literature, to Joni Ter\"{a}v\"{a}inen for helpful comments, and to Sean Eberhard for an enlightening talk and discussions on the algebraic theory of nilsequences. 
The author was supported by the Warwick Mathematics Institute Centre for Doctoral Training, and gratefully acknowledges funding by the Swinnerton-Dyer scholarship. 
\subsection*{Rights Retention} For the purpose of open access, the author has applied a Creative Commons Attribution (CC-BY) licence to any Author Accepted Manuscript version arising
from this submission.
 \subsection{Organisation} Section \ref{nilbackground} covers background theory on polynomial equidistribution on nilmanifolds, as well as some Sobolev theory to facilitate the estimation  of Lipschitz norms which we invoke throughout the argument. In Section \ref{sectiong1} we set up the problem in terms of a polynomial sequence on the Heisenberg  nilmanifold. We show that a lack of equidistribution leads to \(\theta\) being somewhat close to \((a+b\beta)/q\) for integers \(a,b,\) and \(q\) which are not too large. For such \(\theta,\)  we descend onto a subnilmanifold in Section \ref{sectiong2} and show that in fact \(b=0\) and that \(\theta\) is even closer to rational numbers \(a/q\) with \(a\) and \(q\) not too large. In Section \ref{sectiong3} we pass to yet another subnilmanifold to establish a major arc estimate for these \(\theta.\) We show in Section \ref{sectionpositive} that the main term points in the positive real direction to establish Lemma \ref{nonnegative}, thus proving Theorem \ref{mainthm1}. The goal of Section \ref{bracketpolysection} is to adapt the arguments to handle more general bracket polynomials and  prove Theorem \ref{polythm}.

 \section{Background theory}\label{nilbackground}
 We begin this section by covering some background material on group theory and equidistribution, much of which can be found in \cite{GT}.
 Let \(G\) be a connected, simply connected nilpotent Lie Group. A \emph{filtration} \(G_\bullet\) of \emph{degree} \(d\) is a sequence of closed connected subgroups
\[G=G_0=G_1\geq G_2 \geq \dots \geq G_d \geq G_{d+1}=\{1\}\] with the commutators satisfying  \([G_i,G_j]\leq G_{i+j}.\) (The notation \(G_i\) is not to be confused with the same notation used for specific groups defined  throughout the paper.)

 Let \(\mathfrak{g}\) be the Lie algebra associated to \(G.\) Suppose \((X_1,\dots,X_m)\) is a basis of \(\mathfrak{g}.\) The structure constants \(c_{i,j,k}\) are given by the relations of Lie brackets \[[X_i,X_j]=\sum_{k}c_{i,j,k}X_k.\] Mal'cev's criterion states that the structure constants are rational if and only if \(G\) admits a lattice \(\Gamma\) (meaning a discrete co-compact subgroup). For \(Q>0,\) the basis is said to be \emph{\(Q\)-rational} if the structure constants are rationals of height at most \(Q.\) A \emph{nilmanifold} is the quotient space \(G/\Gamma=\{g\Gamma:g\in G\}.\) The \emph{dimension} of \(G/\Gamma\) is the dimension of \(\mathfrak{g},\) equal to \(m.\)
 
 A \emph{Mal'cev basis} for \(G/\Gamma\) is a basis \((X_1,\dots,X_m)\) of \(\mathfrak{g}\) such that:
 \begin{itemize}
 \item The subspace \(\mathfrak{h}_j=\langle X_{j},\dots, X_m\rangle\) is a Lie algebra ideal of \(\mathfrak{g}\) for  \(j=1,\dots,m+1.\) 
 \item Each \(g\in G\) can be written uniquely as \(g=e^{t_1X_1}\dots e^{t_mX_m}\) for \(t_i\in \mathbb{R},\) and \(g\in \Gamma\) if and only if all \(t_i\in \mathbb{Z}.\)
 \end{itemize}
 In this context, \(e\) with an exponent denotes the exponential map from \(\mathfrak{g}\) to \(G.\) (From Section \ref{sectiong1} onwards, we implicitly make use of the formula \(e^X=\sum_{i\geq 0}X^i/i!,\) with \(X\) a square matrix.) One can show that every  \(g\Gamma\in G/\Gamma\) is uniquely expressible as \(g\Gamma=e^{t_1X_1}\dots e^{t_mX_m}\Gamma\) with all \(t_i\in [0,1),\) and so this corresponds to a \emph{fundamental domain} of \(G/\Gamma.\)
 Writing \(H_j=e^{\mathfrak{h}_j}\) and \(H_0=H_1,\) one may show that  
 \[G=H_0=H_1\geq H_2\geq \dots \geq H_{m+1}=\{1\}\] is a filtration. The Mal'cev basis is \emph{adapted to \(G_\bullet\)} if each \(G_j\) is equal to some \(H_i.\)
 
There are several equivalent definitions of a polynomial sequence \(g:\mathbb{Z}\rightarrow G.\) One in terms of a Mal'cev basis is that \(g(n)=e^{t_1(n)X_1}\dots e^{t_m(n)X_m}\) for some ordinary polynomials \(t_i:\mathbb{Z}\rightarrow\mathbb{R}.\) We say that \(g\) is a polynomial \emph{with respect to  \(G_\bullet\)} if  \(\deg t_i <j\) for  \(1\leq i\leq m\) whenever \(G_j\leq H_{i+1}.\) 
Given a filtration \(G_\bullet\) of degree \(d\) adapted to a Mal'cev basis, \(g\) is  always a polynomial with respect to a new filtration adapted to the Mal'cev basis, constructed by inserting sufficiently many copies of \(G\) into the filtration \(G_\bullet.\) The degree of the new filtration is then bounded in terms of \(d\) and \(\max_i \deg t_i.\)

Central to our analysis is the following   quantitative version of Leibman's theorem  by Green and Tao \cite{GT} (the proof of this statement, with the slightly stronger hypothesis of being not \emph{totally} equidistributed, is deduced in \cite[Cor.~2.24]{neale}, or in \cite[Thm.~3.5]{shaoter}).
\begin{theorem}[Green--Tao quantitative equidistribution]\label{GTequid}
Let \(m,d\geq 0,0<\delta<1/2,\) and \(X\geq 1.\) Suppose that \(G/\Gamma\) is a nilmanifold of dimension \(m,\) with a filtration \(G_\bullet\) of degree \(d,\) and  a \(\delta^{-1}\)-rational Mal'cev basis adapted to \(G_\bullet.\)   Let \(g:\mathbb{Z}\rightarrow G\) be a polynomial with respect to \(G_\bullet.\) If \((g(n)\Gamma)_{n\in [X]}\) is not totally \(\delta\)-equidistributed in \(G/\Gamma,\) then there exists a horizontal character \(\eta:G\rightarrow\mathbb{R}/\mathbb{Z}\) such that \(0<|\eta|\ll\delta^{-O_{d,m}(1)}\) and 
\[\lVert \eta \circ g \rVert_{C^\infty[X]}\ll\delta^{-O_{d,m}(1)}.\]
\end{theorem}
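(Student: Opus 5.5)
The plan is to deduce this from the version of the Green--Tao theorem in \cite{GT} with the hypothesis ``not \(\delta\)-equidistributed'', which is their main theorem. That version states: if \((g(n)\Gamma)_{n\in[X]}\) is not \(\delta\)-equidistributed, then there is a horizontal character \(\eta\) with \(0<|\eta|\ll\delta^{-O_{d,m}(1)}\) and \(\lVert\eta\circ g\rVert_{C^\infty[X]}\ll\delta^{-O_{d,m}(1)}\). The only work is passing from total equidistribution to ordinary equidistribution along a progression.

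\emph{Step 1: restrict to a progression.} Failure of total \(\delta\)-equidistribution gives an arithmetic progression \(P\subseteq[X]\) with \(|P|\geq\delta X\) and a Lipschitz function \(F\) satisfying
\[\Bigl|\mathbb{E}_{n\in P}F(g(n)\Gamma)-\int_{G/\Gamma}F\Bigr|>\delta\lVert F\rVert_{\mathrm{Lip}}.\]
Write \(P=\{a+rm:m\in[X']\}\) with \(X'\geq\delta X\). Since \(P\subseteq[X]\) has at least \(\delta X\) elements, we also get \(r\leq\delta^{-1}\) and \(|a|\leq X\).

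\emph{Step 2: apply the equidistributed version.} Set \(g'(m)=g(a+rm)\). This is again a polynomial sequence with respect to \(G_\bullet\), because polynomial sequences are closed under affine reparametrisation of the domain (Lazard--Leibman). By Step 1, \((g'(m)\Gamma)_{m\in[X']}\) is not \(\delta\)-equidistributed. The equidistributed version of the theorem therefore yields \(\eta\) with \(0<|\eta|\ll\delta^{-O(1)}\) and \(\lVert\eta\circ g'\rVert_{C^\infty[X']}\ll\delta^{-O(1)}\).

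\emph{Step 3: transfer back to \([X]\).} Put \(p=\eta\circ g\). This is a polynomial \(\mathbb{Z}\to\mathbb{R}/\mathbb{Z}\) of degree at most \(d\), and \(\eta\circ g'(m)=p(a+rm)\). I would prove that there is an integer \(1\leq q\ll\delta^{-O_d(1)}\) with \(\lVert qp\rVert_{C^\infty[X]}\ll\delta^{-O_d(1)}\). Replacing \(\eta\) by \(q\eta\) then finishes the proof: \(q\eta\) is a nonzero horizontal character, \(|q\eta|\ll\delta^{-O(1)}\), and \(q\eta\circ g=qp\).

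To prove this, expand \(p(n)=\sum_j\alpha_j\binom{n}{j}\), and similarly \(p(a+rm)=\sum_j\alpha'_j\binom{m}{j}\). Work from the top coefficient downward.
\begin{itemize}
\item The top coefficient satisfies \(\alpha'_d=r^d\alpha_d\). Hence \(\lVert r^d\alpha_d\rVert_{\mathbb{R}/\mathbb{Z}}\ll\delta^{-O(1)}X'^{-d}\ll\delta^{-O(1)}X^{-d}\).
\item Multiplying by \(d!\) as needed clears the denominators that come from the binomial basis.
\item For each lower \(j\), \(\alpha'_j\) equals \(r^j\alpha_j\) plus an integer combination, with denominators bounded by \(d!\), of the terms \(\alpha_k a^{k-j}r^j\) for \(k>j\). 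Multiplying by the denominators \(q_k\) already obtained, and using \(|a|\leq X\), the error terms are controlled by \(\delta^{-O(1)}X^{k}\cdot X^{-k}\cdot X^{-j}\)-type bounds.
\item This inductively gives \(\lVert q_j\alpha_j\rVert\ll\delta^{-O(1)}X^{-j}\) with \(q_j\ll\delta^{-O(1)}\). Take \(q=\prod_j q_j\).
\end{itemize}

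The main obstacle is Step 3. The cross terms involve \(a^{k-j}\), which can be as large as \(X^{k-j}\). Multiplying by an integer only scales \(\lVert\cdot\rVert_{\mathbb{R}/\mathbb{Z}}\) linearly, so each higher coefficient must first be made genuinely close to an integer, not merely small mod \(1\) at scale \(X'\). Only then can it be substituted into the lower-degree relations without losing more than \(\delta^{-O(1)}\). This is the content of the standard lemma in \cite[\S7]{GT} (see also \cite[Cor.~2.24]{neale}), and the main care needed is to track that each of the \(O_d(1)\) steps costs only a polynomial factor in \(\delta^{-1}\).
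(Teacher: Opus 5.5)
Your proposal is correct, and it is the standard reduction that the paper outsources to \cite[Cor.~2.24]{neale} and \cite[Thm.~3.5]{shaoter}: restrict to a progression of length at least $\delta X$, apply the non-total Green--Tao theorem to $g(a+r\,\cdot)$, and pull the smoothness bound back to $[X]$ by top-down induction, which works because the coefficient of $\binom{m}{j}$ in $\binom{a+rm}{k}$ is an integer of size $\ll_d \delta^{-O(1)}X^{k-j}$. Only cosmetic fixes are needed: when $|P|\geq 2$ one gets $r\leq 2\delta^{-1}$ rather than $\delta^{-1}$, and in the binomial basis no $d!$ denominators arise at all.
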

Let us define some of the terminology appearing in this theorem.  For an ordinary  polynomial \(\alpha(n)=\alpha_0+\alpha_1\binom{n}{1}+\dots +\alpha_s\binom{n}{s},\) the \emph{smoothness norm} is defined by \[\Vert \alpha\Vert_{C^\infty[X]}=\sup_{1\leq j\leq s}X^j\Vert\alpha_j\Vert_{\mathbb{R}/\mathbb{Z}}. \]

 A sequence \((g(n)\Gamma)_{n\in[X]}\) in \(G/\Gamma\) is said to be  \emph{\(\delta\)-equidistributed} in \(G/\Gamma\) when 
\[\left|\mathbb{E}_{n\in [X]}F(g(n)\Gamma)-\int_{G/\Gamma}F\right|\leq \delta\lVert F \rVert _{\textup{Lip}}\] for all Lipschitz functions \(F:G/\Gamma\rightarrow \mathbb{C}.\) We say \((g(n)\Gamma)_{n\in[X]}\) is \emph{totally \(\delta\)-equidistributed} in \(G/\Gamma\) when
\[\left|\mathbb{E}_{n\in P}F(g(n)\Gamma)-\int_{G/\Gamma}F\right|\leq \delta\lVert F \rVert _{\textup{Lip}}\]
for all Lipschitz functions \(F:G/\Gamma\rightarrow \mathbb{C}\)  and for all arithmetic progressions \(P\subseteq\mathbb[X]\) of size \(|P|\geq \delta X.\) Clearly, being totally \(\delta\)-equidistributed implies being \(\delta\)-equidistributed. 

The Lipschitz constant for a continuous function \(F:G/\Gamma\rightarrow \mathbb{C}\) is defined by
\[\lVert F \rVert _{\textup{Lip}}=\Vert F \Vert _{\infty}+\sup_{\substack{g\Gamma,h\Gamma\in G/\Gamma\\g\Gamma \neq h\Gamma}}\frac{|F(g\Gamma)-F(h\Gamma)|}{d(g\Gamma,h\Gamma)},\] for a suitable  distance function \(d\) which we discuss later.  Although we do not use the terminology elsewhere, a \emph{nilsequence} is a function of the form \(n\mapsto F(g(n)\Gamma)\) with  \(g:\mathbb{Z}\rightarrow G\)  a polynomial sequence and \(F:G/\Gamma\rightarrow\mathbb{C}\)  Lipschitz. However, we also need to work with functions which are not necessarily continuous. 

Given a nilmanifold \(G/\Gamma,\) a \emph{horizontal character} \(\eta:G\rightarrow \mathbb{R}/\mathbb{Z}\) is a continuous additive homomorphism, so that \(\eta(gh)=\eta(g)+\eta(h),\) such that \(\eta(\Gamma)=\{0\}.\) It turns out that every horizontal character is of the form \(\eta(g)=(k_1,\dots k_m)\cdot(t_1,\dots,t_m),\) where \(k_i\in \mathbb{Z}\) and  \(g=e^{t_1X_1}\dots e^{t_mX_m}\) is written in terms of a Mal'cev basis. The \emph{modulus} of \(\eta\) is defined as \(|\eta|=\Vert(k_1,\dots,k_m)\Vert_2=\sqrt{k_1^2+\dots+ k_m^2}.\) 

Given a  basis \((X_1,\dots,X_m)\) for a nilmanifold \(G/\Gamma,\) we can define an inner product on \(\mathfrak{g}\) (i.e.~a Riemannian metric) by 
\[\left\langle \sum_ia_iX_i,\sum_ib_iX_i\right\rangle=\sum_ia_ib_i.\] This induces a (Euclidean) distance function \(d_2:G/\Gamma\times G/\Gamma\rightarrow [0,\infty)\) given by 
\[d_2(g\Gamma,h\Gamma)=\inf\left\{\int_0^1\Vert \gamma'(t) \Vert~\mathrm{d}t\right\},\] where the infimum is taken over all piecewise smooth curves \(\gamma:[0,1]\rightarrow G/\Gamma\) such that \(\gamma(0)=g\Gamma\) and \(\gamma(1)=h\Gamma.\) One may show that \(d_\infty(g\Gamma,h\Gamma) \asymp_m d_2(g\Gamma,h\Gamma),\) where \(d_\infty\) is the distance function introduced in \cite{GT}. Since we view \(m\) as bounded, we may instead  calculate Lipschitz constants with respect to \(d_2,\) and work instead with the norm associated to the  Sobolev space \(W^{1,\infty}(G/\Gamma).\) The idea of using Sobolev norms is not new (see e.g.~\cite{bloominverse}), but it seems not to have been put into practice before.   Some background  on Sobolev spaces on Riemann manifolds  can be found in \cite{heb}, for example.
  With respect to Mal'cev coordinates  we have the following:   writing \(F_{h\Gamma}(g)=F(gh\Gamma)\) for \(g,h\in G,\)  we have \[\sup_{\substack{g\Gamma,h\Gamma\in G/\Gamma\\g\Gamma \neq h\Gamma}}\frac{|F(g\Gamma)-F(h\Gamma)|}{d_2(g\Gamma,h\Gamma)}=\textrm{ess}\sup_{h\in G}\Vert \nabla F_{h\Gamma}|_0 \Vert_2=\textrm{ess}\sup_{h\in G}\sqrt{\sum_{1\leq i\leq m} \left|\frac{\partial F_{h\Gamma}}{\partial t_i}\rvert_0\right|^2 }.\] Here the essential supremum ensures this makes sense even for Lipschitz functions which are not differentiable everywhere (which should be compared with Rademacher's theorem from classical analysis).  As it happens, in our work  we only consider such calculations with  smooth functions, in which case  the essential supremum may be replaced by an ordinary supremum.

  For particular functions \(F:G/\Gamma\rightarrow \mathbb{C}\) with discontinuities, we work with smooth functions \(\Psi:G/\Gamma\rightarrow [0,1]\) such that \(\Psi F\) is smooth (and in particular Lipschitz). We take smooth \(\Psi\) to equal \(0\) around the points of discontinuity of \(F,\) to equal \(1\) outside some small neighbourhood of such points, and to smoothly interpolate the remaining values. 
The purpose of this transition into working with Sobolev norms is that it makes calculating certain Lipschitz norms a simple matter of differentiation and using the product rule  \(\nabla f_1f_2=f_1\nabla f_2  + f_2 \nabla f_1.\) In contrast,  such calculations can be difficult  with the norm provided in \cite{GT}.
 
\section{Equidistribution in \texorpdfstring{$G_1/\Gamma_1$}{G1/Γ1}}\label{sectiong1}
 Let \(G_1=\left\{\begin{psmallmatrix}
1 & x & z\\
0 & 1 & y\\
0 & 0 & 1
\end{psmallmatrix}: x,y,z\in \mathbb{R}\right\}\) and  \(\Gamma_1=\left\{\begin{psmallmatrix}
1 & x & z\\
0 & 1 & y\\
0 & 0 & 1
\end{psmallmatrix}: x,y,z\in \mathbb{Z}\right\},\) with Mal'cev basis given by
\(X_1=\begin{psmallmatrix}
0 & 1 & 0\\
0 & 0 & 0\\
0 & 0 & 0
\end{psmallmatrix}, X_2=\begin{psmallmatrix}
0 & 0 & 0\\
0 & 0 & 1\\
0 & 0 & 0
\end{psmallmatrix}\) and \(X_3=\begin{psmallmatrix}
0 & 0 & 1\\
0 & 0 & 0\\
0 & 0 & 0
\end{psmallmatrix}.\)   Let \(g_1(n)=\begin{psmallmatrix}
1 & \theta n & 0\\
0 & 1 & \beta  n\\
0 & 0 & 1
\end{psmallmatrix},\) with \(\theta\) real and with \(\beta\) and \(\beta^2\) both real, irrational, and non-Liouville. 
 One may easily verify that our Mal'cev basis is \(1\)-rational, and that \(g_1(n)\) is a polynomial sequence in \(G_1.\) Note that  \[
\begin{pmatrix}
1 & x & z\\
0 & 1 & y\\
0 & 0 & 1
\end{pmatrix}\begin{pmatrix}
1 & x' & z'\\
0 & 1 & y'\\
0 & 0 & 1
\end{pmatrix}=\begin{pmatrix}
1 & x+x' & z+z'+xy'\\
0 & 1 & y+y'\\
0 & 0 & 1
\end{pmatrix},\] where the \(xy'\) highlights that \(G_1\) is not Abelian.  Observe that
\begin{equation}\label{fund}\begin{pmatrix}1&x&z\\ 0&1&y\\ 0&0&1\\
\end{pmatrix}\begin{pmatrix}1&-\lfloor{x}\rfloor &-\lfloor{z-x\lfloor{y}\rfloor}\rfloor\\ 0&1&-\lfloor{y}\rfloor\\ 0&0&1\\
\end{pmatrix}=\begin{pmatrix}1&\{x\}&\{z-x\lfloor{y}\rfloor\}\\ 0&1&\{y\}\\ 0&0&1\\
\end{pmatrix},\end{equation} so every \(g_1\Gamma_1\in G_1/\Gamma_1\) has a representative with all three coordinates in \([0,1).\) A simple matrix calculation reveals that this representative is unique. Thus, we may view \(G_1/\Gamma_1,\) known as the Heisenberg nilmanifold, as a twisted cube \([0,1]^3.\) 

\begin{lemma}[Equidistribution in \(G_1/\Gamma_1\) implies cancellation]\label{equidG1} Let \(0<\delta_1<1/10.\) If \((g_1(n)\Gamma_1)_{n\in [X]}\) is \(\delta_{1}\)-equidistributed in \(G_1/\Gamma_1,\) then \[\mathbb{E}_{n\in [X]}e(-\theta n \lfloor \beta  n\rfloor)\ll \delta_1^{1/2}.\]  
\end{lemma}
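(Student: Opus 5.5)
Write \(g_1(n)\) in the fundamental domain using \eqref{fund}. With \(x=\theta n\), \(y=\beta n\), \(z=0\), the reduced representative has coordinates
\[\left(\{\theta n\},\ \{\beta n\},\ \{-\theta n\lfloor \beta n\rfloor\}\right).\]
So the function \(F(x,y,z)=e(z)\) on the fundamental domain \([0,1)^3\) satisfies \(F(g_1(n)\Gamma_1)=e(-\theta n\lfloor\beta n\rfloor)\) exactly.

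The plan is to apply the definition of \(\delta_1\)-equidistribution to \(F\). First I check that \(\int_{G_1/\Gamma_1}F=0\). The Haar measure in Mal'cev coordinates on the fundamental domain is Lebesgue measure on \([0,1)^3\), and \(\int_0^1 e(z)\,\mathrm{d}z=0\).

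The obstacle is that \(F\) is not continuous on \(G_1/\Gamma_1\). Crossing the face \(z\in\{0,1\}\) is harmless, since \(e(z)\) is \(1\)-periodic. But crossing \(y=1\) to \(y=0\) changes \(\lfloor y\rfloor\) by one, so the \(z\)-coordinate jumps by \(x\), and \(e(z)\) becomes \(e(z-x)\). Crossing \(x=1\) to \(x=0\) is fine, because \(x\) enters \(z\) only through \(x\lfloor y\rfloor\) with \(\lfloor y\rfloor=0\) on the domain. So the discontinuity sits on the set \(\{y\in\{0,1\}\}\), and only where \(x\notin\mathbb{Z}\).

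Following Section \ref{nilbackground}, I take a smooth cutoff \(\Psi:G_1/\Gamma_1\to[0,1]\) that depends only on \(y\). It vanishes for \(\|y\|_{\mathbb{R}/\mathbb{Z}}\le\varepsilon\), equals \(1\) for \(\|y\|_{\mathbb{R}/\mathbb{Z}}\ge2\varepsilon\), and has \(|\Psi'|\ll\varepsilon^{-1}\). Then \(\Psi F\) is smooth on \(G_1/\Gamma_1\), because near the bad face it vanishes identically. To bound its Lipschitz norm with the Sobolev formula, I compute the partial derivatives of \((\Psi F)_{h\Gamma}\) at the identity in the coordinates \(t_1,t_2,t_3\). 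Left multiplication by \(e^{t_1X_1}e^{t_2X_2}e^{t_3X_3}\) changes \(z\) by \(t_3+t_1 y'\), with \(y'\in[0,1)\) bounded. So every partial derivative is \(O(1)\) from differentiating \(F\), plus \(O(\varepsilon^{-1})\) from differentiating \(\Psi\). This gives \(\|\Psi F\|_{\mathrm{Lip}}\ll\varepsilon^{-1}\) and \(\int\Psi F=0\), since \(\Psi\) is independent of \(z\).

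The equidistribution hypothesis then yields
\[\left|\mathbb{E}_{n\in[X]}\Psi F(g_1(n)\Gamma_1)\right|\ll\delta_1\varepsilon^{-1}.\]
The remaining error is
\[\left|\mathbb{E}_n (1-\Psi)F(g_1(n)\Gamma_1)\right|\le\mathbb{E}_n\,\tilde\Psi(g_1(n)\Gamma_1),\]
where \(\tilde\Psi\) is a smooth function of \(y\) alone, equal to \(1\) on \(\|y\|\le2\varepsilon\), supported on \(\|y\|\le3\varepsilon\), with Lipschitz norm \(\ll\varepsilon^{-1}\) and integral \(\ll\varepsilon\). Applying equidistribution again, this error is \(\ll\varepsilon+\delta_1\varepsilon^{-1}\).

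Combining, the total is \(\ll\varepsilon+\delta_1\varepsilon^{-1}\). Choosing \(\varepsilon=\delta_1^{1/2}\), which is valid because \(\delta_1<1/10\), gives the claimed bound \(\ll\delta_1^{1/2}\). The main care needed is the smoothness and Lipschitz check of \(\Psi F\) across the twisted gluing; the rest is routine.
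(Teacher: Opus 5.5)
Your proposal is correct and follows the paper's proof essentially step for step. It uses the same function \(F_1=e(z-x\lfloor y\rfloor)\) (your \(e(z)\) on the reduced fundamental domain), the same cutoff \(\psi(y)\) with \(\Vert \Psi F\Vert_{\mathrm{Lip}}\ll\varepsilon^{-1}\), the vanishing integral from integrating in \(z\), and the choice \(\varepsilon=\delta_1^{1/2}\); the only cosmetic difference is your auxiliary majorant \(\tilde\Psi\), whereas the paper applies equidistribution directly to \(1-\Psi\), which is already smooth with Lipschitz norm \(\ll\varepsilon^{-1}\).
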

\begin{proof}
Let \(F_1:G/\Gamma_1\rightarrow \mathbb{C}\) be defined by  \(F_1\left(\begin{psmallmatrix}
1 & x & z\\
0 & 1 & y\\
0 & 0 & 1
\end{psmallmatrix}\Gamma_1\right)=e(z-x\lfloor y\rfloor).\) This is well-defined (consider the upper-right coordinate in \eqref{fund}), but is not continuous (when \(y\in \mathbb{Z}\) and \(x\not \in \mathbb{Z}\)), so we cannot use it as a test function for equidistribution. Let us introduce a smooth weight 
\(\Psi_1\left(\begin{psmallmatrix}
1 & x & z\\
0 & 1 & y\\
0 & 0 & 1
\end{psmallmatrix}\Gamma_1\right)=\psi(y),\) where \(\psi=\psi_\varepsilon:\mathbb{R}\rightarrow [0,1]\) is a \(1\)-periodic  function which, for \(-1/2\leq y<1/2,\) equals 0 when \(|y|<\varepsilon,\) equals \(1\) when \(|y|>2\varepsilon,\) and smoothly interpolates the rest. Here \(\varepsilon>0\) is a small quantity to be chosen later. Now the product \(\Psi_1 F_1\) is continuous.

By the equidistribution property, 
\[\left|\mathbb{E}_{n\in [X]} \Psi _1F_{1}(g_1(n)\Gamma_1)-\int_{G_1/\Gamma_1}  \Psi_1 F_{1}\right|\leq \delta_1 \Vert \Psi_1 F_1 \Vert_{\textrm{Lip}}.\] 
Let us address each term involving \(\Psi_1 F_{1}.\) In terms of Mal'cev coordinates, we have \(F_1\left(e^{t_1X_1}e^{t_2X_2}e^{t_3X_3}\Gamma_1\right)=e(t_3+t_1t_2-t_1\lfloor t_2\rfloor)\) and \(\Psi_1\left(e^{t_1X_1}e^{t_2X_2}e^{t_3X_3}\Gamma_1\right)=\psi(t_2),\) so that \[\int_{G_1/\Gamma_1} \Psi_1 F_1=\int_0^1  \int_0^1  \int_0^1  e(t_3+t_1t_2-t_1\lfloor t_2\rfloor)\psi(t_2)
~\mathrm{d}t_1\, \mathrm{d}t_2\,\mathrm{d}t_3=0,\] since the integral over \(t_3\) is \(0.\)

Next, using that  \(\Vert F_1 \Vert_\infty\leq 1,\) and that \(1-\Psi_1\) is continuous with \( \Vert 1-\Psi_1  \Vert_{\textrm{Lip}}\ll \varepsilon^{-1},\) we estimate  the difference 
\begin{equation}\label{smoothdiffexp}
\begin{split}
\left|\mathbb{E}_{n\in [X]}F_1(g_1(n)\Gamma_1)-\mathbb{E}_{n\in [X]}\Psi_1 F_{1}(g_1(n)\Gamma_1)\right|&\leq   \mathbb{E}_{n\in [X]}(1-\Psi_1 )(g_1(n)\Gamma_1)\\&=\int_{G_1/\Gamma_1} (1-\Psi_1)+O(\delta_1 \Vert 1-\Psi_1 \Vert_{\textrm{Lip}})\\&=\int_0^1 1- \psi(t_2)~\mathrm{d}t_2 +O(\delta_1 \varepsilon^{-1})\\&\ll \varepsilon + \delta_1 \varepsilon^{-1}.
\end{split}
\end{equation}

 Finally, one may verify that \(\Vert \Psi_1 F_1 \Vert_{\textrm{Lip}}\ll \varepsilon^{-1}\) following the discussion at the end of Section \ref{nilbackground}. 
Take \(\varepsilon=\delta_1^{1/2}\) to complete the proof.
\end{proof}
From now unitl Section \ref{bracketpolysection}, let \(X\) be sufficiently large in terms of \(\beta,\)  let \(\kappa_1,\kappa_2,\kappa_3>0\) be   sufficiently small.  Let \(\mu\geq 1\) be an uppoer bound for the irrationality exponents of both \(\beta\) and \(\beta^2.\)   Let \begin{equation}\label{deltas}\delta_1=X^{-\kappa_1/\mu},\delta_2=X^{-\kappa_2/\mu}, \textrm{ and } \delta_3=X^{-\kappa_3/\mu}.\end{equation} We also insist that \(\kappa_1\) is sufficiently small in terms of \(\kappa_2,\) and that \(\kappa_2\) is sufficiently small in terms of \(\kappa_3.\)
\begin{lemma}[Lack of equidistribution in \(G_1/\Gamma_1\) implies being close to major arcs]\label{delta1equid} Let  \(\delta_1\) satisfy \eqref{deltas} and let \(X\) be sufficiently large in terms of \(\beta.\) If \((g_1(n)\Gamma_1)_{n\in [X]}\) is not \(\delta_1\)-equidistributed in \(G_1/\Gamma_1,\) then \begin{equation}\label{theta1}\theta=\frac{a+b\beta }{q}+t\end{equation} for integers \(a,b,q\) with \(b,q\ll \delta_1^{-O(1)}, q\geq 1,\) and \(t\ll \delta_1^{-O(1)}X^{-1}.\) 
\end{lemma}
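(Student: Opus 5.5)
We apply Theorem \ref{GTequid} to \(G_1/\Gamma_1\) with the filtration \(G_1=G_1\geq [G_1,G_1]\geq\{1\}\) of degree \(2\). The Mal'cev basis \((X_1,X_2,X_3)\) is \(1\)-rational and adapted to this filtration, since \([G_1,G_1]=e^{\langle X_3\rangle}\). First we check that \(g_1\) is a polynomial sequence with respect to this filtration. Writing \(g_1(n)\) in Mal'cev coordinates gives \(g_1(n)=e^{\theta nX_1}e^{\beta nX_2}e^{t_3(n)X_3}\). Multiplying out shows that the top-right entry is \(t_3(n)+\theta\beta n^2\), and this must equal \(0\). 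Hence \(t_3(n)=-\theta\beta n^2\) has degree \(2\), as allowed in \(G_2\), while \(t_1\) and \(t_2\) are linear. Since \(\delta_1\)-equidistribution fails, total \(\delta_1\)-equidistribution fails too. Theorem \ref{GTequid} then gives a horizontal character \(\eta\) with \(0<|\eta|\ll\delta_1^{-O(1)}\) and \(\Vert\eta\circ g_1\Vert_{C^\infty[X]}\ll\delta_1^{-O(1)}\).

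Next we identify the horizontal characters. A horizontal character must vanish on \([G_1,G_1]\), because it is a homomorphism into an abelian group and this subgroup consists of commutators. So \(\eta(e^{t_1X_1}e^{t_2X_2}e^{t_3X_3})=k_1t_1+k_2t_2\) with \(k_1,k_2\in\mathbb{Z}\) not both zero and \(|k_1|,|k_2|\ll\delta_1^{-O(1)}\). Therefore \(\eta\circ g_1(n)=(k_1\theta+k_2\beta)n\), and the smoothness norm bound says
\[\Vert k_1\theta+k_2\beta\Vert_{\mathbb{R}/\mathbb{Z}}\ll\delta_1^{-O(1)}X^{-1}.\]

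Two cases remain. If \(k_1\neq0\), write \(k_1\theta+k_2\beta=a+s\) with \(a\in\mathbb{Z}\) and \(|s|\ll\delta_1^{-O(1)}X^{-1}\). After replacing \(k_1\) by \(|k_1|\) and adjusting signs, set \(q=|k_1|\), \(b=-k_2\) (with sign matching) and \(t=s/k_1\). This gives \eqref{theta1} with \(b,q\ll\delta_1^{-O(1)}\) and \(t\ll\delta_1^{-O(1)}X^{-1}\).

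The main obstacle is the case \(k_1=0\), which we must rule out using the non-Liouville hypothesis on \(\beta\). In that case \(k_2\neq0\) and \(\Vert k_2\beta\Vert_{\mathbb{R}/\mathbb{Z}}\ll\delta_1^{-C}X^{-1}\) for some absolute \(C\). Since \(\beta\) is irrational, there is an integer \(a'\) with \(0<|\beta-a'/k_2|\ll\delta_1^{-C}X^{-1}/|k_2|\). The non-Liouville property gives the lower bound \(|\beta-a'/|k_2||\geq m|k_2|^{-M}\gg_\beta\delta_1^{O(\mu)}\). Recalling \(\delta_1=X^{-\kappa_1/\mu}\), we get \(\delta_1^{-O(\mu)}=X^{O(\kappa_1)}\). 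So we obtain \(X^{-1+O(\kappa_1)}\gg_\beta X^{-O(\kappa_1)}\), which is a contradiction once \(\kappa_1\) is small and \(X\) is large in terms of \(\beta\). This is exactly why the exponents in \eqref{deltas} are scaled by \(1/\mu\): the powers of \(\delta_1\) coming from Theorem \ref{GTequid} must survive being raised to the power of the irrationality exponent. One should take \(M\) close to \(\mu\), absorbing the slack into \(\kappa_1\).
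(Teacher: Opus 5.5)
Your proposal is correct and follows the paper's proof. You apply Theorem \ref{GTequid}, identify the horizontal characters as $k_1x+k_2y$, deduce $\Vert k_1\theta+k_2\beta\Vert_{\mathbb{R}/\mathbb{Z}}\ll\delta_1^{-O(1)}X^{-1}$, and rule out $k_1=0$ using the non-Liouville property of $\beta$ together with the $1/\mu$ scaling in \eqref{deltas}. The extra checks you supply (the degree-$2$ filtration, $t_3(n)=-\theta\beta n^2$, and that failure of $\delta_1$-equidistribution implies failure of total $\delta_1$-equidistribution) are details the paper leaves implicit.
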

\begin{proof}
By Theorem \ref{GTequid} there exists a non-trivial horizontal character \(\eta:G_1\rightarrow \mathbb{R}/\mathbb{Z},\) with \(|\eta|\ll \delta_1^{-O(1)},\) such that \(\Vert\eta \circ g_1\Vert_{C^\infty[X]}\ll \delta_1^{-O(1)}.\) By the additive property of \(\eta,\) one finds that such characters are of the form \(\eta\begin{psmallmatrix}
1 & x & z\\
0 & 1 & y\\
0 & 0 & 1
\end{psmallmatrix}=k_1x+k_2y\) for integers \(k_1,k_2\ll\delta_1^{-O(1)} \) not both zero. Since \(\eta(g_1(n))=k_1\theta n+ k_2 \beta  n,\) we see that 
\begin{equation}\label{linrelation} \Vert k_1\theta + k_2 \beta  \Vert_{\mathbb{R}/\mathbb{Z}}\ll \frac{\delta_1^{-O(1)}}{X}.\end{equation}  

Suppose for contradiction that \(k_1=0,\) and so \(k_2\neq 0.\) Since \(\beta\) is irrational and non-Liouville, there exists a positive integer \(M=M(\beta)\leq 2\mu\) such that \[\Vert k_2 \beta  \Vert_{\mathbb{R}/\mathbb{Z}}\gg_\beta \frac{1}{|k_2|^{M-1}} \gg \delta_1^{O(M)}.\]  Therefore \(X \ll \delta_1^{-O(M)},\) which is impossible for large \(X\) given \eqref{deltas}. Thus \(k_1\neq 0\) and the result follows from \eqref{linrelation}.
\end{proof}
\section{Equidistribution in \texorpdfstring{$G_2/\Gamma_2$}{G2/Γ2}}\label{sectiong2}
Let \(G_2=\left\{\begin{psmallmatrix}
1 & bx & y\\
0 & 1 & qx\\
0 & 0 & 1
\end{psmallmatrix}: x,y\in \mathbb{R}\right\}\) and  \(\Gamma_2=\left\{\begin{psmallmatrix}
1 & bx & y\\
0 & 1 & qx\\
0 & 0 & 1
\end{psmallmatrix}: x,y-bqx^2/2\in \mathbb{Z}\right\},\)
with Mal'cev basis given by
\(Y_1=\begin{psmallmatrix}
0 & b & 0\\
0 & 0 & q\\
0 & 0 & 0
\end{psmallmatrix}\) and \(Y_2=\begin{psmallmatrix}
0 & 0 & 1\\
0 & 0 & 0\\
0 & 0 & 0
\end{psmallmatrix}.\) This basis is certainly \(1\)-rational. In fact, all the structure constants equal \(0\) because \(Y_1\) and \(Y_2\) commute with one another.
 When \(\theta\) is as in \eqref{theta1},
 we can factorise the polynomial
\begin{align*}
g_1(n)&=\begin{pmatrix}
1 & \theta n & 0\\
0 & 1 & \beta n\\
0 & 0 & 1
\end{pmatrix}\\&=\begin{pmatrix}
1 & \left(\frac{a+b\beta }{q}+t\right) n & 0\\
0 & 1 & \beta n\\
0 & 0 & 1
\end{pmatrix}
\\&=\begin{pmatrix}
1 & t n & 0\\
0 & 1 & 0\\
0 & 0 & 1
\end{pmatrix}\begin{pmatrix}
1 & \frac{b\beta  n}{q} & -\beta tn^2\\
0 & 1 & \beta n\\
0 & 0 & 1
\end{pmatrix}
\begin{pmatrix}
1 & \frac{an}{q} & 0\\
0 & 1 & 0\\
0 & 0 & 1
\end{pmatrix}
\\&=\varepsilon_2(n)g_2(n)\gamma_2(n).\end{align*}
Here \(\varepsilon_2(n)\) is `smooth' as it is approximately constant because \(t\) is small, \(g_2(n)\in G_2\) is a polynomial, and \(\gamma_2(n)\Gamma_1\) is `periodic' as it depends only on \(n\) modulo \(q.\)  (A more general factorisation framework of this kind  is developed by Green and Tao in \cite{GT}.) 

\begin{lemma}[Total equidistribution in \(G_2/\Gamma_2\) implies cancellation]\label{totalequidg2} Let \(\delta_1,\delta_2\) satisfy \eqref{deltas} and let \(X\) be sufficiently large in terms of \(\beta.\) Suppose \(\theta\) is of the form \eqref{theta1}. If \((g_2(n)\Gamma_2)_{n\in [X]}\) is totally \(\delta_2\)-equidistributed in \(G_2/\Gamma_2,\) then
\[\mathbb{E}_{n\in [X]}e(-\theta n \lfloor \beta  n\rfloor)\ll \delta_1^{-O(1)}\delta_2^{1/2}.\] 
\end{lemma}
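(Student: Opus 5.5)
Write $e(-\theta n\lfloor\beta n\rfloor)=F_1(g_1(n)\Gamma_1)$ and use the factorisation $g_1(n)=\varepsilon_2(n)g_2(n)\gamma_2(n)$. Since $\gamma_2(n)\in\Gamma_1$ whenever $q\mid n$, the natural first step is to split $[X]$ into arithmetic progressions modulo $q$. Each such progression is further cut into subprogressions of length about $\delta_1^{C}\delta_2 X$, where $C$ is large enough that $\varepsilon_2(n)$ is essentially constant on each piece. Here $tn\ll\delta_1^{-O(1)}$ varies by at most $\delta_1^{-O(1)}|P|/X$ over a piece $P$, which is small. On a piece $P\subseteq\{n\equiv r \ (\mathrm{mod}\ q)\}$ we write $\gamma_2(n)=\gamma_2(r)\gamma'$ with $\gamma'\in\Gamma_1$. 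Then
\[F_1(g_1(n)\Gamma_1)=F_1\bigl(\varepsilon_2(n)g_2(n)\gamma_2(r)\Gamma_1\bigr),\]
so on $P$ the summand equals $\tilde F_{P}(g_2(n)\Gamma_2)$, where $\tilde F_P(x\Gamma_2)=F_1(\varepsilon_0 x\gamma_2(r)\Gamma_1)$ and $\varepsilon_0=\varepsilon_2(n_0)$ for some $n_0\in P$. Replacing $\varepsilon_2(n)$ by $\varepsilon_0$ costs a small error, since it changes $z-x\lfloor y\rfloor$ by a controlled amount away from the discontinuity.

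Two checks are needed to make $\tilde F_P$ a legitimate function on $G_2/\Gamma_2$. First, it must be well defined: conjugating $\Gamma_2$ by $\gamma_2(r)$ must land in $\Gamma_1$. Because $\gamma_2(r)$ has the rational entry $ar/q$, this may fail, so I would pass to progressions modulo $2q$ (or $q^2$) and adjust $\Gamma_2$ or absorb the rational part into the representative. This costs only factors of $\delta_1^{-O(1)}$. Second, $\tilde F_P$ must be smoothed exactly as in Lemma~\ref{equidG1}. The discontinuities of $F_1$ lie at $y\in\mathbb{Z}$, which in $G_2$ means $qx+\text{const}\in\mathbb{Z}$. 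We multiply by a cutoff $\Psi$ in the $x$-coordinate vanishing within $\varepsilon$ of these $O(q)$ hyperplanes, giving Lipschitz norm $\ll q\varepsilon^{-1}\ll\delta_1^{-O(1)}\varepsilon^{-1}$ by the Sobolev discussion of Section~\ref{nilbackground}.

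Now apply total $\delta_2$-equidistribution on each $P$, which is allowed since $|P|\geq\delta_2 X$ once $\delta_1^{C}$ is absorbed, or we pass to slightly coarser pieces. The main terms become $\int_{G_2/\Gamma_2}\Psi\tilde F_P$, and this should vanish. In the coordinates $x,y$ the function has the form $e(y+\phi(x))$ times a function of $x$: multiplying by $e^{sY_2}$ shifts the top-right entry $z$ by $s$ and leaves $x\lfloor y\rfloor$ unchanged. The $Y_2$-direction is central and the fundamental domain contains a full unit interval in $y$, so integrating over $y$ kills it.

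The error terms are handled as in \eqref{smoothdiffexp}. The region where $\Psi\neq1$ has measure $\ll q\varepsilon$ and is controlled by equidistribution applied to $1-\Psi$. In total this gives $\ll\delta_1^{-O(1)}(\varepsilon+\delta_2\varepsilon^{-1})$ plus the $\varepsilon_2$-approximation error. Choosing $\varepsilon=\delta_2^{1/2}$ yields the claimed bound $\delta_1^{-O(1)}\delta_2^{1/2}$. The main obstacle I expect is the well-definedness step: checking that right multiplication by $\gamma_2(r)$ and left multiplication by $\varepsilon_0$ respect the lattices, in particular the $bqx^2/2$ twist in $\Gamma_2$. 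I would first try to verify directly, with the matrix product formula, that $\gamma_2(r)^{-1}\Gamma_2\gamma_2(r)\subseteq\Gamma_1$ for $n$ in a fixed class modulo $2q$.
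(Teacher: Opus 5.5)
Your outline is the paper's proof. You factor $g_1=\varepsilon_2g_2\gamma_2$, split into progressions of common difference $q$, freeze $\varepsilon_2$ and replace $\gamma_2(n)$ by $\gamma_2(r)$, transport $F_1,\Psi_1$ to $G_2/\Gamma_2$, kill the main term by integrating in the central $Y_2$-direction, and take $\varepsilon=\delta_2^{1/2}$. However, the step you flag as the main obstacle is misdiagnosed, and the remedy you propose would not work.

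\textbf{Well-definedness.} Conjugating $\begin{psmallmatrix}1&bx&y\\0&1&qx\\0&0&1\end{psmallmatrix}$ by $\gamma_2(r)$ just replaces $y$ by $y-arx$, and $arx\in\mathbb{Z}$ for $x\in\mathbb{Z}$. So the rational entry $ar/q$ is harmless, the residue class $r$ plays no role, and passing to classes modulo $2q$ or $q^2$ changes nothing. The real issue is the twist. Elements of $\Gamma_2$ have $y\in bqx^2/2+\mathbb{Z}$, so $\gamma_2(r)^{-1}\Gamma_2\gamma_2(r)\subseteq\Gamma_1$ holds exactly when $bq$ is even. If $bq$ is odd, right multiplication by the element of $\Gamma_2$ with $x=1$, $y=1/2$ multiplies your $\tilde F_P$ by $e(1/2)=-1$, so $\tilde F_P$ is not a function on $G_2/\Gamma_2$ at all.

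The fix, as in the paper, is to replace $(a,b,q)$ by $(2a,2b,2q)$ in \eqref{theta1} before $G_2,\Gamma_2$ are defined. This keeps $b,q\ll\delta_1^{-O(1)}$ and leaves $\varepsilon_2(n),g_2(n),\gamma_2(n)$ unchanged. It also makes $\Gamma_2$ consist exactly of the elements of $G_2$ with $x,y\in\mathbb{Z}$, so the conjugate lies in $\Gamma_1$. This must be a normalisation of the data fixed before the hypothesis is imposed; the same doubled data then feed Lemma~\ref{delta2equid}. ``Adjusting $\Gamma_2$'' after assuming equidistribution on the original quotient is not free. For odd $bq$ the new lattice has index $2$ in the old one, and equidistribution does not automatically lift to a double cover.

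\textbf{Progression length.} Do not shrink the pieces to length $\delta_1^{C}\delta_2X$. Total $\delta_2$-equidistribution says nothing about progressions shorter than $\delta_2X$, and the factor $\delta_1^{C}$ cannot be ``absorbed'' there. Shrinking is also unnecessary. Left multiplication by $\varepsilon_2$ does not move $y=\beta n$, so replacing $\varepsilon_2(n)$ by $\varepsilon_2(n_0)$ changes the phase by exactly $t(n-n_0)\{\beta n\}$, with no discontinuity issue. On pieces of common difference $q$ and length between $\delta_2X$ and $2\delta_2X$, this costs at most $|t|q|P|\ll\delta_1^{-O(1)}\delta_2$, already within the target. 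Such pieces exist because each residue class modulo $q$ has about $X/q$ elements. That is far more than $\delta_2X$, since $q\ll\delta_1^{-O(1)}$ and $\kappa_1$ is small in terms of $\kappa_2$.

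With these two repairs your argument goes through as in the paper.
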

\begin{proof} Recall that \(F_1(g_1(n)\Gamma_1)=e(-\theta n \lfloor \beta n\rfloor)\) and  \(\Psi_1\left(\begin{psmallmatrix}
1 & x & z\\
0 & 1 & y\\
0 & 0 & 1
\end{psmallmatrix}\Gamma_1\right)=\psi(y).\) 
 By total equidistribution, \[\left|\mathbb{E}_{n\in P} \Psi_2 F_2(g_2(n)\Gamma_2)-\int_{G_2/\Gamma_2} \Psi_2 F_2\right|\leq \delta_2\Vert \Psi_2 F_2 \Vert_{\textrm{Lip}} \] for all arithmetic progressions \(P\subseteq [X]\) of size \(|P|\geq \delta_2X,\) where \(\Psi_2,F_2:G_2/\Gamma_2\rightarrow\mathbb{C}\) are given by \(F_2(g_2\Gamma_2)=F_1(\varepsilon_2g_2\gamma_2\Gamma_1)\) for some specially chosen \(\varepsilon_2,\gamma_2\in G_2,\)  and \(\Psi_2(g_2\Gamma_2)=\Psi_1(\varepsilon_2g_2\gamma_2\Gamma_1)\) with \(\varepsilon=\delta_2^{1/2}.\) We emphasise that \(F_2\) and \(\Psi_2\) depend on the constants \(\varepsilon_2\) and \(\gamma_2.\)  

 The functions \(\Psi_2\) and \(F_2\) are  well-defined  provided \(\gamma_2^{-1}\Gamma_2\gamma_2\subseteq \Gamma_1.\) Indeed, if \(g_2\Gamma_2=g_2^*\Gamma_2\) then \(g_2\gamma_2\gamma_2^{-1}\Gamma_2\gamma_2=g_2^*\gamma_2\gamma_2^{-1}\Gamma_2\gamma_2,\) so \(g_2\gamma_2\Gamma_1=g_2^*\gamma_2\Gamma_1,\) and thus \(\varepsilon_2 g_2\gamma_2\Gamma_1=\varepsilon_2 g_2^*\gamma_2\Gamma_1.\)
 By writing in terms of Mal'cev coordinates (see below), we have \(\int_{G_2/\Gamma_2}(1-\Psi_2)=\int_0^11-\psi(qt_1)\mathrm{d}t_1\ll \delta_2^{1/2},\) and we also find that  \(\Vert \Psi_2 \Vert_{\textup{Lip}},\Vert \Psi_2 F_2\Vert_{\textup{Lip}}\ll \delta_1^{-O(1)}\delta_2^{-1/2}\)  by the discussion at the end of Section \ref{nilbackground}.

Partition the interval \([X]\) into arithmetic progressions \(P_{j,k}\) of common difference \(q,\) with elements congruent to \(j\) modulo \(q,\) and with size \(\delta_2 X\leq |P_{j,k}|\leq 2\delta_2 X.\) 
Pick an element \(n_{j,k}\in P_{j,k}.\) Then
\begin{align*}\mathbb{E}_{n\in [X]}F_1(g_1(n)\Gamma_1)
&=\mathbb{E}_{n\in[X]}F_1(\varepsilon_2(n)g_2(n)\gamma_2(n)\Gamma_1)\\ &\leq \max_{j,k}\left|\mathbb{E}_{n\in P_{j,k}}F_1(\varepsilon_2(n)g_2(n)\gamma_2(j)\Gamma_1)\right|\\&= \max_{j,k}\left|\mathbb{E}_{n\in P_{j,k}}F_1(\varepsilon_2(n_{j,k})g_2(n)\gamma_2(j)\Gamma_1)\right|+E\\
&= \max_{j,k}\left|\mathbb{E}_{n\in P_{j,k}}F_2(g_2(n)\Gamma_2)\right|+E
\\&=\max_{j,k}\left|\mathbb{E}_{n\in P_{j,k}}\Psi_2F_2(g_2(n)\Gamma_2)\right|+O(\delta_1^{-O(1)}\delta_2^{1/2})+E
\\&=\left|\int_{G_2/\Gamma_2} \Psi_2 F_2\right|+O(\delta_1^{-O(1)}\delta_2^{1/2})+E.
\end{align*} Here the first \(O(\delta_1^{-O(1)}\delta_2^{1/2})\) term comes from a nearly identical calculation to \eqref{smoothdiffexp}, and 
\[E=\max_{j,k}\left|\mathbb{E}_{n\in P_{j,k}}F_1(\varepsilon_2(n)g_2(n)\gamma_2(j)\Gamma_1)-F_1(\varepsilon_2(n_{j,k})g_2(n)\gamma_2(j)\Gamma_1)\right|\] is the error incurred by replacing \(n\) with a representative element \(n_{j,k}\) in the `smooth' part, and where we choose \(\varepsilon_2=\varepsilon_2(n_{j,k})\) and \(\gamma_2=\gamma_2(j)\) for the definition of \(\Psi_2\) and \(F_2\) (and although \(\Psi_2\) and \(F_2\) depend on \(j\) and \(k\) for example, we suppress this in our notation).  The fact that this \(\gamma_2\) satisfies the property \(\gamma_2^{-1}\Gamma_2\gamma_2\subseteq \Gamma_1\) follows from the calculation
\begin{equation}\label{conjugation}
\begin{pmatrix}
1 & \frac{-aj}{q} & 0\\
0 & 1 & 0\\
0 & 0 & 1
\end{pmatrix}\begin{pmatrix}
1 & bx & y\\
0 & 1 & qx\\
0 & 0 & 1
\end{pmatrix}\begin{pmatrix}
1 & \frac{aj}{q} & 0\\
0 & 1 & 0\\
0 & 0 & 1
\end{pmatrix}
= \begin{pmatrix}
1 & bx & y-ajx\\
0 & 1 & qx\\
0 & 0 & 1\end{pmatrix}
\end{equation}
and that, by doubling \(a,b,\) and \(q\) in \eqref{theta1} if necessary, we may assume without loss of generality that \(b\) is even, so that \(x,y-bqx^2/2\in \mathbb{Z}\) if and only if \(x,y\in \mathbb{Z}.\)

Observe that 
\begin{align*}\varepsilon_2(n_{j,k})e^{t_1Y_1}e^{t_2Y_2}\gamma_2(j)&=\begin{pmatrix}
1 & tn_{j,k} & 0\\
0 & 1 & 0\\
0 & 0 & 1\end{pmatrix}
\begin{pmatrix}
1 & bt_1 & \frac{bqt_1^2}{2}\\
0 & 1 & qt_1\\
0 & 0 & 1\end{pmatrix}
\begin{pmatrix}
1 & 0 & t_2\\
0 & 1 & 0\\
0 & 0 & 1\end{pmatrix}
\begin{pmatrix}
1 & \frac{aj}{q} & 0\\
0 & 1 & 0\\
0 & 0 & 1\end{pmatrix}
\\&= \begin{pmatrix}
1 & \frac{aj}{q}+bt_1+tn_{j,k} & \frac{bqt_1^2}{2}+qt_1tn_{j,k}+t_2\\
0 & 1 & qt_1\\
0 & 0 & 1\end{pmatrix},
\end{align*}
so that in Mal'cev coordinates \begin{equation}\label{f2formula}\begin{split}F_2(e^{t_1Y_1}e^{t_2Y_2}\Gamma_2)&=F_1(\varepsilon_2(n_{j,k})e^{t_1Y_1}e^{t_2Y_2}\gamma_2(j)\Gamma_1)\\&= e\left(\frac{bqt_1^2}{2}+qt_1tn_{j,k}+t_2-\left(\frac{aj}{q}+bt_1+tn_{j,k}\right) \lfloor qt_1\rfloor\right).\end{split}\end{equation}
Therefore \begin{align*}
&\int_{G_2/\Gamma_2}\Psi_2F_2 \\&=\int_0^1\int_0^1  e\left(\frac{bqt_1^2}{2}+qt_1tn_{j,k}+t_2-\left(\frac{aj}{q}+bt_1+tn_{j,k}\right) \lfloor qt_1\rfloor\right)\psi(qt_1) ~\mathrm{d}t_1\, \mathrm{d}t_2=0  
\end{align*} since the integral over \(t_2\) is 0.

Since 
\begin{align*}\varepsilon_2(n_{j,k})g_2(n)\gamma_2(j)&=\begin{pmatrix}
1 & tn_{j,k} & 0\\
0 & 1 & 0\\
0 & 0 & 1
\end{pmatrix}\begin{pmatrix}
1 & \frac{b\beta n}{q} & -\beta t n^2\\
0 & 1 &  \beta n\\
0 & 0 & 1
\end{pmatrix}\begin{pmatrix}
1 & \frac{aj}{q} & 0\\
0 & 1 & 0\\
0 & 0 & 1
\end{pmatrix}
\\&=\begin{pmatrix}
1 & \frac{aj+b\beta n}{q}+tn_{j,k}  & \beta nn_{j,k} t-\beta n^2t\\
0 & 1 & \beta n\\
0 & 0 & 1
\end{pmatrix},
\end{align*}
it follows that
\begin{multline*} F_1(\varepsilon_2(n) g_2(n)\gamma_2(j)\Gamma_1) -F_1(\varepsilon_2(n_{j,k})g_2(n)\gamma_2(j)\Gamma_1)\\ =e\left(-\left(\frac{aj+b\beta n}{q}+tn\right)\lfloor \beta n\rfloor\right )-e\left(\beta nn_{j,k} t-\beta n^2t-\left(\frac{aj+b\beta n}{q}+tn_{j,k} \right)\lfloor\beta n\rfloor\right),\end{multline*}
which we may bound by 
\begin{align*}
&\ll |e(-tn\lfloor \beta n\rfloor+tn_{j,k}\lfloor \beta n\rfloor -\beta nn_{j,k}t+\beta n^2t)-1|
\\ &\ll |-tn\lfloor \beta n\rfloor+tn_{j,k}\lfloor \beta n\rfloor -\beta nn_{j,k}t+\beta n^2t|
\\& =|t||\lfloor \beta n\rfloor(n_{j,k}-n)-\beta n (n_{j,k}-n)|
\\ &=|t||\lfloor \beta n\rfloor-\beta n||n_{j,k}-n|.
\end{align*}
Therefore \[E \ll |t||P_{j,k}|q\ll|t|q\delta_2 X\ll \delta_1^{-O(1)}\delta_2.\]
This completes the proof. 
\end{proof}

To derive the same conclusion as above, Neale begins by showing that equidistribution of \((g_2(n)\Gamma_2)_{n\in[X]}\) implies  equidistribution of \((\gamma_2^{-1}g_2(n)\gamma_2(\gamma_2^{-1}\Gamma_2\gamma_2))_{n\in[X]}\) on a `conjugate nilmanifold'  of the shape \((\gamma_2^{-1}G_2\gamma_2)/(\gamma_2^{-1}\Gamma_2\gamma_2).\) Working with such sequences requires extra computation as well as technical machinery of Green and Tao \cite[Appendix A]{GT} to handle the associated Lipschitz norm (see e.g.~\cite[Appendix B]{gtmobius} for a similar manoeuvre). We avoid this entirely by our choice of function \(F_2:G_2/\Gamma_2\rightarrow\mathbb{C}\) which we observe is well-defined  provided that  \(\gamma_2^{-1}\Gamma_2\gamma_2\subseteq \Gamma_1.\) This simplification is similarly applied to \(G_3/\Gamma_3\) given later.

\begin{lemma}[Lack of total equidistribution in \(G_2/\Gamma_2\) implies being in major arcs]\label{delta2equid} Let \(\delta_1,\delta_2\) satisfy \eqref{deltas} and let \(X\) be sufficiently large in terms of \(\beta.\) Suppose \(\theta\) is of the form \eqref{theta1}. If \((g_2(n)\Gamma_2)_{n\in [X]}\) is not totally \(\delta_2\)-equidistributed in \(G_2/\Gamma_2,\) then \begin{equation}\label{theta2}\theta=\frac{a }{q}+t\end{equation} for integers \(a\) and \(q\) with \(1\leq q\ll \delta_1^{-O(1)}\) and \(t\ll \delta_2^{-O(1)}X^{-2}.\)  
\end{lemma}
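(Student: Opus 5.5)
The plan is to apply Theorem \ref{GTequid} to \((g_2(n)\Gamma_2)_{n\in[X]}\) on \(G_2/\Gamma_2\), using the \(1\)-rational Mal'cev basis \((Y_1,Y_2)\); here \(\dim=2\) and the degree is bounded. We may take \(b\) even, as in the proof of Lemma \ref{totalequidg2}. Then \(\Gamma_2\) corresponds exactly to \(t_1,t_2\in\mathbb{Z}\), where \(e^{t_1Y_1}e^{t_2Y_2}\) has entries \(bt_1\), \(qt_1\) and \(bqt_1^2/2+t_2\). Since \(G_2\) is abelian, every horizontal character has the form \(\eta(e^{t_1Y_1}e^{t_2Y_2})=k_1t_1+k_2t_2\) with \(k_1,k_2\in\mathbb{Z}\).

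First I will write \(g_2(n)\) in Mal'cev coordinates. Matching entries gives \(t_1=\beta n/q\), which is consistent with the \((1,2)\) entry \(b\beta n/q\). The upper-right entry gives \(t_2=-\beta tn^2-\frac{b\beta^2}{2q}n^2\). Hence
\[\eta(g_2(n))=\frac{k_1\beta}{q}n-k_2\Big(\beta t+\frac{b\beta^2}{2q}\Big)n^2 .\]
Writing \(n^2=2\binom n2+n\), Theorem \ref{GTequid} yields \(0<|(k_1,k_2)|\ll\delta_2^{-O(1)}\) and two bounds:
\[X^2\Big\Vert 2k_2\Big(\beta t+\tfrac{b\beta^2}{2q}\Big)\Big\Vert_{\mathbb{R}/\mathbb{Z}}\ll\delta_2^{-O(1)},\qquad X\Big\Vert \tfrac{k_1\beta}{q}-k_2\Big(\beta t+\tfrac{b\beta^2}{2q}\Big)\Big\Vert_{\mathbb{R}/\mathbb{Z}}\ll\delta_2^{-O(1)} .\]

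Next I rule out \(k_2=0\). In that case \(k_1\neq0\) and \(\Vert k_1\beta/q\Vert\ll\delta_2^{-O(1)}X^{-1}\). Multiplying by \(q\) gives \(\Vert k_1\beta\Vert\ll q\,\delta_2^{-O(1)}X^{-1}\ll\delta_1^{-O(1)}\delta_2^{-O(1)}X^{-1}\). Because \(\beta\) is irrational and non-Liouville, we also have \(\Vert k_1\beta\Vert\gg_\beta|k_1|^{-O(\mu)}\gg\delta_2^{O(\mu)}\). Together these force \(X\ll\delta_1^{-O(\mu)}\delta_2^{-O(\mu)}\), which contradicts \eqref{deltas} once \(\kappa_1,\kappa_2\) are small and \(X\) is large. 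So \(k_2\neq0\).

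With \(k_2\neq 0\), the quadratic bound says \(k_2b\beta^2/q+2k_2\beta t=m+c\) for some integer \(m\) and some \(|c|\ll\delta_2^{-O(1)}X^{-2}\). From \eqref{theta1} we know \(|t|\ll\delta_1^{-O(1)}X^{-1}\). Multiplying by \(q\) therefore gives
\[\Vert k_2b\beta^2\Vert_{\mathbb{R}/\mathbb{Z}}\ll q\,(|k_2\beta t|+|c|)\ll\delta_1^{-O(1)}\delta_2^{-O(1)}X^{-1}.\]

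The main obstacle is the Diophantine step showing \(b=0\), and the bound just obtained is what it feeds on. If \(b\neq0\), then \(k_2b\) is a nonzero integer of size \(\ll\delta_1^{-O(1)}\delta_2^{-O(1)}\). Since \(\beta^2\) is irrational and non-Liouville, \(\Vert k_2b\beta^2\Vert\gg_\beta (\delta_1\delta_2)^{O(\mu)}\). This again contradicts \eqref{deltas} for large \(X\); this is where the factor \(1/\mu\) in the exponents is used. Hence \(b=0\), and \(\theta=a/q+t\) with \(1\le q\ll\delta_1^{-O(1)}\).

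It remains to improve the bound on \(t\). Now \(\Vert 2k_2\beta t\Vert\ll\delta_2^{-O(1)}X^{-2}\), while \(|2k_2\beta t|\ll\delta_2^{-O(1)}\delta_1^{-O(1)}X^{-1}<1/2\) for large \(X\). So the distance to the nearest integer is just the absolute value, giving \(|t|\ll_\beta\delta_2^{-O(1)}X^{-2}/|k_2|\le\delta_2^{-O(1)}X^{-2}\), as required for \eqref{theta2}.
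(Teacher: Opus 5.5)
Your proposal is correct and follows the paper's proof essentially step for step. It uses the same horizontal character (your \(k_1t_1+k_2t_2\) is the paper's \(k_3x+k_4(y-bqx^2/2)\)), the same two smoothness bounds, and the same sequence of deductions: \(k_2\neq 0\) via non-Liouville \(\beta\), then \(b=0\) via non-Liouville \(\beta^2\), then the sharpened bound on \(t\). The differences are only cosmetic: you multiply through by \(q\) where the paper uses \(\Vert\alpha\Vert_{\mathbb{R}/\mathbb{Z}}/q\le\Vert\alpha/q\Vert_{\mathbb{R}/\mathbb{Z}}\), and you explicitly check \(|2k_2\beta t|<1/2\) before replacing the distance to the nearest integer by the absolute value, a step the paper leaves implicit.
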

\begin{proof}
As \((g_2(n)\Gamma_2)_{n\in [X]}\) is not totally \(\delta_2\)-equidistributed in \(G_2/\Gamma_2,\)  by Theorem \ref{GTequid} there exists a non-trivial horizontal character \(\eta:G_2 \rightarrow \mathbb{R}/\mathbb{Z},\) with \(|\eta|\ll \delta_2^{-O(1)},\) such that \(\Vert\eta \circ g_2\Vert_{C^\infty[X]}\ll \delta_2^{-O(1)}.\) Such characters are of the form \(\eta\begin{psmallmatrix}
1 & bx & y\\
0 & 1 & qx\\
0 & 0 & 1
\end{psmallmatrix}=k_3x+k_4(y-bqx^2/2)\) for integers \(k_3,k_4\ll\delta_2^{-O(1)} \) not both zero. Since \begin{align*}
\eta(g_2(n))&=k_3\frac{\beta  n}{q}-k_4\left(\beta  tn^2+\frac{bq}{2}\left(\frac{\beta  n}{q}\right)^2\right)\\  &= \left(k_3\frac{\beta  }{q}-k_4\beta t -k_4\frac{b\beta^2 }{2q} \right)n-2k_4\left(\beta  t+\frac{b\beta ^2 }{2q}\right)\binom{n}{2},
\end{align*}
we see that \begin{equation}\label{eqn1}\left\Vert k_3\frac{\beta  }{q}-k_4\beta  t -k_4\frac{b\beta^2 }{2q}\right\Vert_{\mathbb{R}/\mathbb{Z}}\ll \frac{\delta_2^{-O(1)}}{X}\end{equation} and 
\begin{equation}\label{eqn2} \left\Vert -2k_4\left(\beta  t+\frac{b\beta^2 }{2q}\right)\right\Vert_{\mathbb{R}/\mathbb{Z}}\ll \frac{\delta_2^{-O(1)}}{X^2}.\end{equation}

We first claim that this implies  \(k_4\neq 0.\) Indeed,
 if \(k_4=0,\) then \(k_3\neq 0\) and so \eqref{eqn1} implies
\[\frac{1}{q}\left\Vert k_3\beta  \right\Vert_{\mathbb{R}/\mathbb{Z}}\leq \left\Vert k_3\frac{\beta }{q}\right\Vert_{\mathbb{R}/\mathbb{Z}}\ll \frac{\delta_2^{-O(1)}}{X}\] which contradicts   \(\beta\) being irrational and non-Liouville as, for some positive integer \(M=M(\beta)\leq 2\mu\), we have 
\[\frac{1}{q}\left\Vert k_3\beta  \right\Vert_{\mathbb{R}/\mathbb{Z}}\gg_\beta\frac{1}{q|k_3|^{M-1}}\gg \delta_1^{O(1)}\delta_2^{O(M)}.\]  Hence \(k_4\neq 0\) as claimed.

Next, we claim that \(b=0\) because  \(\beta^2\) is irrational and non-Liouville. Indeed, by the triangle inequality 
\[\left\Vert -2k_4\left(\beta  t+\frac{b\beta^2 }{2q}\right)\right\Vert_{\mathbb{R}/\mathbb{Z}}\geq\left\Vert k_4\frac{b\beta^2  }{q}\right\Vert_{\mathbb{R}/\mathbb{Z}}-\left\Vert 2k_4\beta  t\right\Vert_{\mathbb{R}/\mathbb{Z}}.\] Since \(t\ll\delta_1^{-O(1)}X^{-1}\) and \(k_4\ll \delta_2^{-O(1)},\)  we have  \begin{equation}\label{smallt}\left\Vert 2k_4\beta  t\right\Vert_{\mathbb{R}/\mathbb{Z}}=| 2k_4\beta  t|\ll \frac{\delta_1^{-O(1)}\delta_2^{-O(1)}}{X}.\end{equation}
Then \eqref{eqn2} gives \[\frac{1}{q}\left\Vert k_4b\beta^2 \right\Vert_{\mathbb{R}/\mathbb{Z}}\leq\left\Vert k_4\frac{b\beta^2  }{q}\right\Vert_{\mathbb{R}/\mathbb{Z}}\ll \frac{\delta_2^{-O(1)}}{X^2}+\frac{\delta_1^{-O(1)}\delta_2^{-O(1)}}{X}.\] However \(\beta^2\) is irrational and non-Liouville, so if \(k_4b\beta^2\neq 0\) then there exists some positive integer \(M'=M'(\beta^2)\leq 2\mu\) such that 
\[\frac{1}{q}\left\Vert k_4b \beta^2 \right\Vert_{\mathbb{R}/\mathbb{Z}}\gg_\beta \frac{1}{q|k_4b|^{M'-1}}\gg \delta_1^{O(M')}\delta_2^{O(M')},\] which is incompatible with the previous bound.  Therefore \(b=0\) as claimed.
We may now strengthen \eqref{smallt}, since \eqref{eqn2} implies
\[| 2k_4\beta  t|=\left\Vert 2k_4\beta  t\right\Vert_{\mathbb{R}/\mathbb{Z}}\ll\frac{\delta_2^{-O(1)}}{X^2},\] and therefore \[t\ll\frac{\delta_2^{-O(1)}}{X^2},\] completing the proof.
\end{proof}
\section{Equidistribution in \texorpdfstring{$G_3/\Gamma_3$}{G3/Γ3}}\label{sectiong3}
Let \(G_3=\left\{\begin{psmallmatrix}
1 & 0 & 0\\
0 & 1 & qx\\
0 & 0 & 1
\end{psmallmatrix}: x\in \mathbb{R}\right\}\) and  \(\Gamma_3=\left\{\begin{psmallmatrix}
1 & 0 & 0\\
0 & 1 & qx\\
0 & 0 & 1
\end{psmallmatrix}: x\in\mathbb{Z}\right\},\)
with Mal'cev basis given by
\(Z_1=\begin{psmallmatrix}
0 & 0 & 0\\
0 & 0 & q\\
0 & 0 & 0
\end{psmallmatrix}.\) This basis is clearly \(1\)-rational, and in fact all the structure constants equal \(0.\)  Let \(\theta\) be as in \eqref{theta2}. We can once again factorise the polynomial  
\begin{align*}g_1(n)&=\begin{pmatrix}
1 & \theta n & 0\\
0 & 1 & \beta n\\
0 & 0 & 1
\end{pmatrix}
\\&=\begin{pmatrix}
1 & \left(\frac{a}{q}+t\right) n & 0\\
0 & 1 & \beta n\\
0 & 0 & 1
\end{pmatrix}
\\&=\begin{pmatrix}
1 & tn & -\beta n^2t\\
0 & 1 & 0\\
0 & 0 & 1
\end{pmatrix}\begin{pmatrix}
1 & 0 & 0\\
0 & 1 & \beta n\\
0 & 0 & 1
\end{pmatrix}\begin{pmatrix}
1 & \frac{an}{q} & 0\\
0 & 1 & 0\\
0 & 0 & 1
\end{pmatrix}
\\&=\varepsilon_3(n)g_3(n)\gamma_3(n), 
\end{align*}
where \(\varepsilon_3(n)\) is `smooth', \(g_3(n)\in G_3\) is a polynomial, and \(\gamma_3(n)\Gamma_1\) is `periodic'. 
\begin{lemma}[Total equidistribution in \(G_3/\Gamma_3\)] \label{equidg3} Let \(\delta_1,\delta_2,\delta_3\) satisfy \eqref{deltas} and let \(X\) be sufficiently large in terms of \(\beta.\) Then \((g_3(n)\Gamma_3)_{n\in [X]}\) is totally \(\delta_3\)-equidistributed in \(G_3/\Gamma_3.\) 
\end{lemma}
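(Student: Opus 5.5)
We argue by contradiction, following the pattern of Lemmas \ref{delta1equid} and \ref{delta2equid}. Suppose \((g_3(n)\Gamma_3)_{n\in [X]}\) is not totally \(\delta_3\)-equidistributed in \(G_3/\Gamma_3\). The nilmanifold \(G_3/\Gamma_3\) is one-dimensional and abelian, with the \(1\)-rational Mal'cev basis \(Z_1\). We may take the trivial filtration of degree \(1\), with respect to which \(g_3(n)=e^{(\beta n/q)Z_1}\) is a (linear) polynomial sequence. Theorem \ref{GTequid} therefore applies with \(m=d=1\). It produces a nontrivial horizontal character \(\eta\) with \(0<|\eta|\ll\delta_3^{-O(1)}\) and \(\Vert\eta\circ g_3\Vert_{C^\infty[X]}\ll \delta_3^{-O(1)}\).

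Every horizontal character on \(G_3\) has the form \(\eta\begin{psmallmatrix}1&0&0\\0&1&qx\\0&0&1\end{psmallmatrix}=k_5x\) with \(k_5\in\mathbb{Z}\setminus\{0\}\) and \(|k_5|\ll\delta_3^{-O(1)}\). Since \(\eta(g_3(n))=k_5\beta n/q\), the smoothness norm bound says
\[\left\Vert \frac{k_5\beta}{q}\right\Vert_{\mathbb{R}/\mathbb{Z}}\ll\frac{\delta_3^{-O(1)}}{X}.\]
Using \(\Vert qy\Vert_{\mathbb{R}/\mathbb{Z}}\le q\Vert y\Vert_{\mathbb{R}/\mathbb{Z}}\), this gives \(\Vert k_5\beta\Vert_{\mathbb{R}/\mathbb{Z}}\ll q\,\delta_3^{-O(1)}X^{-1}\). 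By Lemma \ref{delta2equid} we have \(q\ll\delta_1^{-O(1)}\), so the right-hand side is \(\ll \delta_1^{-O(1)}\delta_3^{-O(1)}X^{-1}\).

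On the other hand, \(\beta\) is irrational and non-Liouville, so there is \(M=M(\beta)\le 2\mu\) with
\[\Vert k_5\beta\Vert_{\mathbb{R}/\mathbb{Z}}\gg_\beta |k_5|^{-(M-1)}\gg\delta_3^{O(M)}.\]
Combining the two bounds gives \(X\ll_\beta \delta_1^{-O(1)}\delta_3^{-O(\mu)}\). By \eqref{deltas} the right-hand side is \(X^{O(\kappa_1/\mu)+O(\kappa_3)}\). Once \(\kappa_1,\kappa_3\) are sufficiently small this exponent is below \(1\), which is false for \(X\) sufficiently large in terms of \(\beta\). This contradiction proves the lemma.

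The only step needing care is the bookkeeping of exponents. The implied \(O(1)\) exponents from Theorem \ref{GTequid} are absolute here because \(m=d=1\). One must also check that \(\mu\ge M/2\) makes the \(\delta_3^{O(M)}=X^{-O(\kappa_3M/\mu)}\) loss a small power of \(X\) independent of \(\beta\), and that \(q\) enters only through the earlier bound \(q\ll\delta_1^{-O(1)}\). With these checks, the smallness of the \(\kappa_i\) closes the argument. I do not expect a genuine obstacle: \(G_3\) is abelian, so only the linear coefficient and the Diophantine property of \(\beta\) (not \(\beta^2\)) are involved.
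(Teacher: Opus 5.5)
Your proposal is correct and is essentially the paper's argument. You apply Theorem \ref{GTequid} to obtain a character \(k_5x\), convert \(\Vert k_5\beta/q\Vert_{\mathbb{R}/\mathbb{Z}}\ll\delta_3^{-O(1)}X^{-1}\) into a bound on \(\Vert k_5\beta\Vert_{\mathbb{R}/\mathbb{Z}}\) using \(q\ll\delta_1^{-O(1)}\), and contradict the non-Liouville lower bound for \(\beta\) via \eqref{deltas}. Your extra bookkeeping (the degree-\(1\) filtration, and \(\delta_3^{-O(M)}=X^{O(\kappa_3)}\) since \(M\le 2\mu\)) only makes explicit what the paper leaves implicit.
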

\begin{proof}
Suppose \((g_3(n)\Gamma_3)_{n\in [X]}\) were not totally \(\delta_3\)-equidistributed in \(G_3/\Gamma_3.\) Then by Theorem \ref{GTequid}, there exists a non-trivial horizontal character \(\eta:G_3\rightarrow \mathbb{R}/\mathbb{Z}\) with \(|\eta|\ll \delta_3^{-O(1)},\)   such that \(\Vert\eta \circ g_3\Vert_{C^\infty[X]}\ll \delta_3^{-O(1)}.\) Such characters are of the form \(\eta\begin{psmallmatrix}
1 & 0 & 0\\
0 & 1 & qx\\
0 & 0 & 1
\end{psmallmatrix}=k_5x\) for a non-zero integer \(k_5\ll\delta_3^{-O(1)}.\)  
Since \(\eta(g_3(n))=k_5\beta n/q ,\) we see that 
\[\frac{1}{q}\left\Vert k_5 \beta \right\Vert_{\mathbb{R}/\mathbb{Z}}\leq \left\Vert \frac{k_5 \beta}{q} \right\Vert_{\mathbb{R}/\mathbb{Z}}\ll \frac{\delta_3^{-O(1)}}{X}.\] 

However, since \(\beta\) is irrational and non-Liouville, for some positive integer \(M=M(\beta)\leq 2\mu\) we have
\[\frac{1}{q}\Vert k_5 \beta  \Vert_{\mathbb{R}/\mathbb{Z}}\gg_\beta \frac{1}{q|k_5|^{M-1}} \gg \delta_1^{O(1)}\delta_3^{O(M)}.\] This is impossible for large \(X\)  by our choice of \(\delta_1\) and \(\delta_3\) in \eqref{deltas}.
\end{proof}
Next we obtain a major arc estimate, whose main term  should be  compared to the main term \eqref{root2main} when \(\beta=\sqrt{2},\) or Lemma \ref{majorarcrootestimate} below. 
\begin{lemma}[Major and minor arc estimates]\label{majorarcest} Suppose \(\beta\) and \(\beta^2\) are  real, irrational, and non-Liouville. Let \(\delta_1,\delta_2,\delta_3\) satisfy \eqref{deltas} and let \(X\) be sufficiently large in terms of \(\beta.\) Assume \(\theta\in \mathbb{R}.\) Then 
\[\mathbb{E}_{n\in [X]}e(-\theta n \lfloor \beta  n\rfloor)\ll \delta_1^{1/2},\] or  \(\theta\) is of the form \eqref{theta2}. Furthermore, for all \(\theta\) as in \eqref{theta2} with \((a,q)=1\) we have 
\begin{align*}
\mathbb{E}_{n\in [X]}e(-\theta n \lfloor \beta  n\rfloor)=\frac{1}{qX}\int_{0}^{X}e(-\beta v^2 t) ~\mathrm{d}v+O(\delta_2^{-O(1)}\delta_3^{1/2}).
\end{align*}
\end{lemma}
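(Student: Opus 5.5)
The first assertion is just the chain of earlier lemmas. If \((g_1(n)\Gamma_1)_{n\in[X]}\) is \(\delta_1\)-equidistributed, Lemma \ref{equidG1} gives the bound \(\ll \delta_1^{1/2}\). Otherwise Lemma \ref{delta1equid} puts \(\theta\) in the form \eqref{theta1}. Then either \((g_2(n)\Gamma_2)_{n\in[X]}\) is totally \(\delta_2\)-equidistributed, in which case Lemma \ref{totalequidg2} gives a bound \(\ll\delta_1^{-O(1)}\delta_2^{1/2}\), or Lemma \ref{delta2equid} gives the form \eqref{theta2}. Since \(\kappa_1\) is sufficiently small in terms of \(\kappa_2\), we have \(\delta_1^{-O(1)}\delta_2^{1/2}\le\delta_1^{1/2}\). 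This proves the dichotomy.

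For the major arc estimate, fix \(\theta=a/q+t\) as in \eqref{theta2} with \((a,q)=1\), and use the factorisation \(g_1=\varepsilon_3 g_3\gamma_3\). Partition \([X]\) into progressions \(P_{j,k}\) of common difference \(q\), residue \(j\) mod \(q\), and length between \(\delta_3 X\) and \(2\delta_3X\). On each such progression I replace \(\varepsilon_3(n)\) by \(\varepsilon_3(n_{j,k})\). The phase is \(-tn\lfloor\beta n\rfloor\), and freezing only the smooth factor produces the error
\[
|t|\,|\beta n-\lfloor\beta n\rfloor|\,|n-n_{j,k}|\,X\ll \delta_2^{-O(1)}X^{-2}\cdot \delta_3 X\cdot X .
\]
The factor \(X\) appears because the computation mirrors Lemma \ref{totalequidg2}: the discrepancy is \(t(\lfloor\beta n\rfloor-\beta n)\) times \(n\)-differences, plus a term \(t\beta(n^2-\dots)\) that is absorbed exactly into the frozen factor. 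Altogether this is \(\ll\delta_2^{-O(1)}\delta_3\).

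Next define \(F_3(g\Gamma_3)=F_1(\varepsilon_3(n_{j,k})\,g\,\gamma_3(j)\Gamma_1)\), together with the cutoff \(\Psi_3\) built from \(\psi\) with \(\varepsilon=\delta_3^{1/2}\). This is well defined because \(\gamma_3(j)^{-1}\Gamma_3\gamma_3(j)\subseteq\Gamma_1\), which follows from the conjugation \eqref{conjugation} with \(b=0\). Computing as in \eqref{f2formula} with \(g=e^{sZ_1}\),
\[
F_3(e^{sZ_1}\Gamma_3)=e\Bigl(-\beta n_{j,k}^2t\cdot(\ldots)-\bigl(\tfrac{aj}{q}+tn_{j,k}\bigr)\lfloor qs\rfloor\Bigr),
\]
up to the precise bookkeeping of the smooth factor. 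The derivatives of \(\Psi_3F_3\) are \(\ll q\,\delta_3^{-1/2}+|t|X^2\ll \delta_2^{-O(1)}\delta_3^{-1/2}\).

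Lemma \ref{equidg3} gives total \(\delta_3\)-equidistribution, so the average of \(\Psi_3F_3\) over \(P_{j,k}\) equals \(\int_0^1 \Psi_3F_3\,\mathrm{d}s\) up to \(O(\delta_2^{-O(1)}\delta_3^{1/2})\). Removing \(\Psi_3\) costs another \(O(\delta_2^{-O(1)}\delta_3^{1/2})\), by the argument of \eqref{smoothdiffexp}. The integral itself becomes
\[
\mathbb{E}_{m\bmod q}\, e\Bigl(-\frac{ajm}{q}\Bigr)\, e\bigl(-\beta n_{j,k}^2 t\bigr)+O\bigl(|t|n_{j,k}q\bigr),
\]
where \(m=\lfloor qs\rfloor\) ranges over residues mod \(q\). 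The small error arises because \(tn_{j,k}\lfloor qs\rfloor\ll |t|Xq\) and \(\beta n^2t\) is replaced by \((\beta n)(n)t\) with \(\lfloor\beta n\rfloor\approx \beta n\).

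Averaging over \(j\) mod \(q\) gives
\[
\mathbb{E}_{j,m}\,e(-ajm/q)=\frac{1}{q},
\]
since \((a,q)=1\). Summing over \(k\), the values \(e(-\beta n_{j,k}^2t)\) form a Riemann sum for \(\frac{1}{X}\int_0^X e(-\beta v^2t)\,\mathrm{d}v\). Each block has length \(\delta_3X\) and the derivative of the phase is \(\ll |t|X\ll\delta_2^{-O(1)}X^{-1}\), so the Riemann sum error is \(O(\delta_2^{-O(1)}\delta_3)\). Collecting the errors gives the stated formula.

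The main obstacle is the bookkeeping in the middle step. One has to split the phase \(-tn\lfloor\beta n\rfloor\) correctly between the frozen smooth factor and the \(G_3\)-coordinate, so that \(F_3\) depends on \(n\) only through \(g_3(n)\Gamma_3\), while all the \(t\)-dependent errors stay of size \(\delta_2^{-O(1)}\delta_3\) via \(t\ll\delta_2^{-O(1)}X^{-2}\). The safest choice is to keep the factor \(e(-\beta n^2t)\) exact, using \(\varepsilon_3(n)\) only in its \((1,3)\) entry, and to freeze only the \(tn\) entry.
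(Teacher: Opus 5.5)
Your proposal is correct and follows essentially the same route as the paper. The dichotomy comes from the same chain of lemmas, using $\kappa_1$ small in terms of $\kappa_2$. The major arc estimate uses the same ingredients: the factorisation $g_1=\varepsilon_3g_3\gamma_3$, progressions mod $q$ of length $\asymp\delta_3X$, total equidistribution from Lemma \ref{equidg3} with the cutoff $\Psi_3$, the geometric sum giving $1/q$, and a Riemann sum for the integral.

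One small correction concerns the freezing error. If you freeze all of $\varepsilon_3(n)$, as the paper does, the phase discrepancy is $t(n_{j,k}-n)(\lfloor\beta n\rfloor+\beta n_{j,k})$. It is not your expression involving $\beta n-\lfloor\beta n\rfloor$. Its size is nevertheless $\ll|\beta t|\,|n-n_{j,k}|\,X\ll\delta_2^{-O(1)}\delta_3$, which is the bound you claimed, so the conclusion is unaffected.
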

\begin{proof}
By the work in the Sections \ref{sectiong1} and \ref{sectiong2}, we may assume that \(\theta\) is of the form \eqref{theta2}. The coprimality of \(a\) and \(q\) is not used until the very end of the proof. By Lemma \ref{equidg3}, \((g_3(n)\Gamma_3)_{n\in [X]}\) is totally \(\delta_3\)-equidistributed in \(G_3/\Gamma_3.\) Therefore, 
 \[\left|\mathbb{E}_{n\in P} \Psi_3F_3(g_3(n)\Gamma_3)-\int_{G_3/\Gamma_3} \Psi_3F_3\right|\leq \delta_3\Vert \Psi_3 F_3 \Vert_{\textrm{Lip}}\] for all arithmetic progressions \(P\subseteq [X]\) of size \(|P|\geq \delta_3X,\) where \(\Psi_3,F_3:G_3/\Gamma_3\rightarrow\mathbb{C}\) are given by \(F_3(g_3\Gamma_3)=F_1(\varepsilon_3g_3\gamma_3\Gamma_1),\) for some specially chosen \(\varepsilon_3,\gamma_3\in G_3,\) and \(\Psi_3(g_3\Gamma_3)=\Psi_1(\varepsilon_3g_3\gamma_3\Gamma_1)\) with \(\varepsilon=\delta_3^{1/2}.\) As before, the functions \(\Psi_3\) and \(F_3\) are  well-defined  provided \(\gamma_3^{-1}\Gamma_3\gamma_3\subseteq \Gamma_1.\) 
 By writing in terms of Mal'cev coordinates (see below), we have \(\int_{G_3/\Gamma_3}(1-\Psi_3)=\int_0^1 1-\psi(qt_1)\mathrm{d}t_1\ll \delta_3^{1/2},\) and we also find that  \(\Vert \Psi_3 \Vert_{\textup{Lip}},\Vert \Psi_3 F_3\Vert_{\textup{Lip}}\ll \delta_1^{-O(1)}\delta_3^{-1/2}\)  by the discussion at the end of Section \ref{nilbackground}.
 
Partition the interval \([X]\) into arithmetic progressions  \(P_{j,k}\) of common difference \(q\) with elements congruent to \(j\) modulo \(q,\)  with each progression of size \( |P_{j,k}|=\lceil\delta_3 X \rceil,\) and some leftover set of size \(O(\delta_3 X q).\)
Pick an element \(n_{j,k}\in P_{j,k}.\)   By a similar analysis to the previous section,
\begin{align*}
\mathbb{E}_{n\in [X]} F_1(g_1(n)\Gamma_1)&=\mathbb{E}_{n\in [X]}F_1(\varepsilon_3(n)g_3(n)\gamma_3(n)\Gamma_1)
\\&=\mathbb{E}_{j,k}\mathbb{E}_{n\in P_j,k}F_1(\varepsilon_3(n)g_3(n)\gamma_3(j)\Gamma_1)+O(\delta_3 Xq/X)
\\&=\mathbb{E}_{j,k}\mathbb{E}_{n\in P_j,k}F_1(\varepsilon_3(n_{j,k})g_3(n)\gamma_3(j)\Gamma_1)+O(\delta_1^{-O(1)}\delta_3)+E'
\\&=\mathbb{E}_{j,k}\mathbb{E}_{n\in P_j,k}F_3(g_3(n)\Gamma_3)+O(\delta_1^{-O(1)}\delta_3)+E'
\\&=\mathbb{E}_{j,k}\mathbb{E}_{n\in P_j,k}\Psi_3F_3(g_3(n)\Gamma_3)+O(\delta_1^{-O(1)}\delta_3^{1/2})+E'
\\&=\mathbb{E}_{j,k}\int_{G_3/\Gamma_3}\Psi_3F_3+O(\delta_1^{-O(1)}\delta_3^{1/2})+E'
\\&=\mathbb{E}_{j,k}\int_{G_3/\Gamma_3}F_3+O(\delta_1^{-O(1)}\delta_3^{1/2})+E'.
\end{align*}
Here
\[E'=\max_{j,k}\left|\mathbb{E}_{n\in P_{j,k}}F_1(\varepsilon_3(n)g_3(n)\gamma_3(j)\Gamma_1)-F_1(\varepsilon_3(n_{j,k})g_3(n)\gamma_3(j)\Gamma_1)\right|\] is the error incurred by replacing \(n\) with a representative element \(n_{j,k}\) in the `smooth' part, and where we choose \(\varepsilon_3=\varepsilon_3(n_{j,k})\) and \(\gamma_3=\gamma_3(j)\) for the definition of \(\Psi_3\) and \(F_3\) (and although \(\Psi_3\) and \(F_3\) depend on \(j\) and \(k\) for example, we suppress this in our notation). One can see  that this \(\gamma_3\) satisfies the property \(\gamma_3^{-1}\Gamma_3\gamma_3\subseteq \Gamma_1\) as before in \eqref{conjugation}, and so our \(\Psi_3\) and \(F_3\) are well-defined. 

Since 
\begin{align*}\varepsilon_3(n_{j,k})g_3(n)\gamma_3(j)&=\begin{pmatrix}
1 & tn_{j,k} & -\beta n_{j,k}^2t\\
0 & 1 & 0\\
0 & 0 & 1
\end{pmatrix}\begin{pmatrix}
1 & 0 & 0\\
0 & 1 & \beta n\\
0 & 0 & 1
\end{pmatrix}\begin{pmatrix}
1 & \frac{aj}{q} & 0\\
0 & 1 & 0\\
0 & 0 & 1
\end{pmatrix}
\\&=\begin{pmatrix}
1 & \frac{aj}{q}+tn_{j,k} & \beta n  n_{j,k} t-\beta n_{j,k}^2t\\
0 & 1 & \beta n\\
0 & 0 & 1
\end{pmatrix},
\end{align*}
it follows that 
\begin{multline*} F_1(\varepsilon_3(n) g_3(n)\gamma_3(j)\Gamma_1) -F_1(\varepsilon_3(n_{j,k})g_3(n)\gamma_3(j)\Gamma_1)\\ =e\left( -\left(\frac{aj}{q}+tn\right)\lfloor \beta n\rfloor\right)-e\left(\beta n  n_{j,k} t-\beta n_{j,k}^2t-\left(\frac{aj}{q}+tn_{j,k}\right)\lfloor \beta n\rfloor\right),\end{multline*}
which we may bound by 
\begin{align*}
&\ll |e(-tn\lfloor \beta n\rfloor+tn_{j,k}\lfloor \beta n\rfloor-\beta n  n_{j,k} t+\beta n_{j,k}^2t)-1|
\\ &\ll |-tn\lfloor \beta n\rfloor+tn_{j,k}\lfloor \beta n\rfloor-\beta n  n_{j,k} t+\beta n_{j,k}^2t|
\\ &=|t||(n_{j,k}-n)\lfloor \beta n\rfloor+\beta n_{j,k}(n_{j,k}-n)|
\\&=|t||n_{j,k}-n||\lfloor \beta n\rfloor+\beta n|.
\end{align*}
Therefore \[E' \ll |\beta||t||P_{j,k}|qX\ll |\beta||t|q\delta_3 X^2 \ll \delta_2^{-O(1)}\delta_3 .\]

Now we consider the integral. Observe that 
\begin{align*}\varepsilon_3(n_{j,k})e^{t_1Z_1}\gamma_3(j)&=\begin{pmatrix}
1 & tn_{j,k} & -\beta n_{j,k}^2t\\
0 & 1 & 0\\
0 & 0 & 1\end{pmatrix}
\begin{pmatrix}
1 & 0 & 0\\
0 & 1 & qt_1\\
0 & 0 & 1\end{pmatrix}
\begin{pmatrix}
1 & \frac{aj}{q} & 0\\
0 & 1 & 0\\
0 & 0 & 1\end{pmatrix}
\\&= \begin{pmatrix}
1 & \frac{aj}{q}+tn_{j,k} & qt_1tn_{j,k}-\beta n_{j,k}^2t\\
0 & 1 & qt_1\\
0 & 0 & 1\end{pmatrix},
\end{align*}
so that in Mal'cev coordinates \begin{equation}\label{f3formula}
\begin{split}F_3(e^{t_1Z_1}\Gamma_3)&=F_1(\varepsilon_3(n_{j,k})e^{t_1Z_1}\gamma_3(j)\Gamma_1)\\&= e\left(qt_1tn_{j,k}-\beta n_{j,k}^2t-\left(\frac{aj}{q}+tn_{j,k}\right)\lfloor q t_1\rfloor\right)\\&= e\left(-\beta n_{j,k}^2t-\frac{aj}{q}\lfloor q t_1\rfloor\right)+O(|t|qX),\end{split}\end{equation} where the last line follows from Taylor's theorem (assuming \(0\leq t_1<1\)). 
Therefore \[\int_{G_3/\Gamma_3}F_3=\int_0^1  e\left(-\beta n_{j,k}^2t-\frac{aj}{q}\lfloor q t_1\rfloor\right)~\mathrm{d}t_1+O(|t|qX). \]
Splitting the interval into regions where \(\lfloor qt_1 \rfloor\) equals a constant \(m,\) we see that the right-hand integral  is
\[\sum_{0\leq m<q}\int_{m/q}^{(m+1)/q}  e\left(-\beta n_{j,k}^2t-\frac{ajm}{q}\right) ~\mathrm{d}t_1=e(-\beta n_{j,k}^2t)\mathbb{E}_{m}e \left(\frac{-ajm}{q}\right).
\]
Taking the average over \(j,k\) yields 
\[\mathbb{E}_{j,k}\int_{G_3/\Gamma_3}F_3=\mathbb{E}_{j,k}e(-\beta n_{j,k}^2t)\mathbb{E}_{m}e \left(\frac{-ajm}{q}\right)+O(|t|qX).\]

We may choose \(P_{j,k}\) such that we can take as our representative element \(n_{j,k}=kq\lceil \delta_3 X \rceil +O(q)\) where \(0\leq j< q\) and \(1\leq k\leq X/(q\lceil \delta_3 X \rceil ).\) 
Then we can approximate \(n_{j,k}^2=(kq\lceil \delta_3 X \rceil)^2 +O(qn_{j,k})=(kq\lceil \delta_3 X \rceil)^2 +O(qX),\) and so
by Taylor expansion \begin{align*}\mathbb{E}_{k}e(-\beta n_{j,k}^2t)&=\mathbb{E}_{k}e(-\beta (kq\lceil \delta_3 X \rceil)^2t)+O(|\beta||t|qX)
\\& =\frac{q\lceil \delta_3 X \rceil}{X}\sum_{ k\leq X/(q\lceil \delta_3 X \rceil )} e(-\beta (kq\lceil \delta_3 X \rceil )^2 t)+O(|\beta||t|qX)
\\&= \frac{q\lceil \delta_3 X \rceil}{X}\int_{0}^{X/(q\lceil \delta_3 X \rceil )}e(-\beta (qu\lceil \delta_3 X \rceil )^2 t) ~\mathrm{d}u +O(|\beta||t|q\delta_3 X^2)
\\& =\frac{1}{X}\int_{0}^{X}e(-\beta v^2 t) ~\mathrm{d}v +O(\delta_2^{-O(1)}\delta_3).
\end{align*}
Here we have approximated the sum by an integral using
\begin{multline*}\sum_{k\leq K}f(k)=\sum_{k\leq K}\int_{k-1}^kf(k)~\mathrm{d}u=\sum_{k\leq K}\int_{k-1}^kf(u)+O\left(\max_{v\in[k-1,k]}|f'(v)|\right)~\mathrm{d}u\\=\int_0^Kf(u) ~\mathrm{d}u+O\left(K\max_{v\in[0,K]}|f'(v)|\right).\end{multline*} 

Assembling these estimates gives
\[\mathbb{E}_{n\in [X]}e(-\theta n \lfloor \beta  n\rfloor)=\mathbb{E}_{j,m ~(\textrm{mod } q)}e \left(\frac{-ajm}{q}\right)\frac{1}{X}\int_{0}^{X}e(-\beta v^2 t) ~\mathrm{d}v+O(\delta_2^{-O(1)}\delta_3^{1/2}).\]
We have not yet used the assumption that \((a,q)=1.\) With this coprimality condition, the average over residues modulo \(q\) can be evaluated by separately considering  \(j=0\) and \(j\neq 0\) (and computing a geometric sum) for the desired result. \end{proof}

Though we do not need the next result, we record it for the interested reader. Suppose instead that \(\beta\in \mathbb{R}\setminus\mathbb{Q}\) and  \(\beta^2\in \mathbb{Q}.\) As shown by Neale \cite[\S6]{neale}, the major arcs are then given by   \begin{equation}\label{majorsqroot}\theta=\frac{a+b\beta }{q}+t\end{equation} for integers \(a,b,q\) with \(b,q\ll \delta_1^{-O(1)}, q\geq 1,\) and \(t\ll \delta_2^{-O(1)}X^{-2}.\) The accompanying estimate below was proved by Neale  for the case \(\beta=\sqrt{2}\)  (see work of Daskalakis \cite{dask} for the general case).
\begin{lemma}[Major and minor arc estimates for square roots of rationals]\label{majorarcrootestimate} Suppose  \(\beta \in \mathbb{R}\setminus  \mathbb{Q}\)  satisfies \(\beta^2=c/d\) for some  positive \(c,d\in \mathbb{Z}.\) 
 Let \(\delta_1,\delta_2,\delta_3\) satisfy \eqref{deltas} and let \(X\) be sufficiently large in terms of \(\beta.\) Assume \(\theta\in \mathbb{R}.\) Then 
\[\mathbb{E}_{n\in [X]}e(-\theta n \lfloor \beta  n\rfloor)\ll \delta_1^{1/2},\] or  \(\theta\) is of the form \eqref{majorsqroot}. Furthermore, for all \(\theta\) as in \eqref{majorsqroot} we have
\begin{multline*}
\mathbb{E}_{n\in [X]}e(-\theta n \lfloor \beta  n\rfloor)=\\ \mathbb{E}_{\substack{j~(\textup{mod } 2dq)\\ m~(\textup{mod } 2q)}}e\left(\frac{-ajm}{q}-\frac{b}{2q}(\beta^2j^2+m^2)\right)\int_0^1 e\left(\frac{b}{2q}x^2\right)~\mathrm{d}x\frac{1}{X}\int_{0}^{X}e(-\beta v^2 t) ~\mathrm{d}v\\+O(\delta_2^{-O(1)}\delta_3^{1/2}).
\end{multline*}
\end{lemma}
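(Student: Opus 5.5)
We follow the same three-stage descent as for Lemma \ref{majorarcest}, but now with \(\beta^2=c/d\) rational. This changes the outcome of the \(G_2/\Gamma_2\) analysis: \(b\) can no longer be forced to vanish, so the \(G_2\) stage itself becomes the terminal stage.

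\textbf{Step 1 (minor arcs).} Lemma \ref{equidG1} and Lemma \ref{delta1equid} only use that \(\beta\) is irrational and non-Liouville, which holds here since \(\beta\) is a quadratic irrational. So either the sum is \(\ll\delta_1^{1/2}\), or \(\theta=(a+b\beta)/q+t\) with \(b,q\ll\delta_1^{-O(1)}\) and \(t\ll\delta_1^{-O(1)}X^{-1}\).

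\textbf{Step 2 (improving \(t\)).} We factorise \(g_1=\varepsilon_2g_2\gamma_2\) as in Section \ref{sectiong2}. If \(g_2\) is totally \(\delta_2\)-equidistributed, Lemma \ref{totalequidg2} gives cancellation, and that lemma did not use \(\beta^2\notin\mathbb{Q}\). Otherwise we redo the proof of Lemma \ref{delta2equid}. The case \(k_4=0\) is excluded exactly as before. Now the term \(k_4b\beta^2/q=k_4bc/(dq)\) is rational with denominator dividing \(dq\). After multiplying \(q\) by \(d\) and adjusting \(a,b\), that is, replacing \(k_4\) by a multiple, \eqref{eqn2} then yields \(\|2k_4\beta t\|\ll\delta_2^{-O(1)}X^{-2}\), hence \(t\ll\delta_2^{-O(1)}X^{-2}\). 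This gives \eqref{majorsqroot}. The cleanest way to arrange the bookkeeping is to first absorb the rational part \(b\beta^2/(2q)\) into the \(n^2\) coefficient and note that it is \(2dq\)-periodic in its contribution.

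\textbf{Step 3 (major arc asymptotic).} With \(t\ll\delta_2^{-O(1)}X^{-2}\), we show that \(g_2\) modulo the rational part is totally \(\delta_3\)-equidistributed on a suitable subnilmanifold. The cleanest route is to rewrite \(g_2(n)\) as \(\varepsilon(n)\tilde g(n)\tilde\gamma(n)\), where \(\tilde\gamma\) is periodic with period dividing \(2dq\): the \(n^2\) coefficient \(b\beta^2/(2q)\) is rational with that denominator. The remaining \(\tilde g(n)=\exp(\beta n Y_1/q)\) is irrational and linear. Its equidistribution on the torus \(G_2/\Gamma_2\), which is abelian since \(Y_1,Y_2\) commute, follows by the argument of Lemma \ref{equidg3}: a character \(k_3x+k_4(\cdot)\) with \(k_4\ne0\) is excluded because the quadratic coefficient is then of size \(k_4\beta t\), which forces \(t\) tiny, and otherwise the non-Liouville property of \(\beta\) applies.

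We then split \([X]\) into progressions modulo \(2dq\) of length \(\asymp\delta_3X\), freeze the smooth part at representatives \(n_{j,k}\), and replace averages by \(\int_{G_2/\Gamma_2}F_2\), with the smoothing error \(O(\delta_2^{-O(1)}\delta_3^{1/2})\) as in Lemma \ref{majorarcest}. The integral, computed from \eqref{f2formula}, is \(\int_0^1\!\int_0^1 e(bqt_1^2/2+t_2-\cdots)\). The \(t_2\)-integral no longer kills it after the shift, because the \(t_2\) coordinate is now a genuine \(y-bqx^2/2\) offset. Splitting \(t_1\) according to \(\lfloor qt_1\rfloor=m\) and substituting \(qt_1=m+x\) produces \(e(-ajm/q-\tfrac{b}{2q}(\beta^2j^2+m^2))\int_0^1e(\tfrac b{2q}x^2)\,dx\). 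Here the \(\beta^2 j^2\) term comes from the periodic part, evaluated via the identity \(\{\beta n\}^2\equiv\beta^2n^2+\lfloor\beta n\rfloor^2-2\beta n\lfloor\beta n\rfloor\). Averaging the smooth factor over \(k\) gives \(\frac1X\int_0^Xe(-\beta v^2t)\,dv\), exactly as in Lemma \ref{majorarcest}.

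The main obstacle is Step 3's algebra. We must choose the correct periodic factor, making sure its conjugation keeps \(\gamma^{-1}\Gamma_2\gamma\subseteq\Gamma_1\), which requires the modulus \(2dq\). We must also verify that the resulting Mal'cev-coordinate integrand separates into the stated Gauss-type average times \(\int_0^1e(bx^2/(2q))\,dx\), with the correct signs. I would first compute directly \(e(-\theta n\lfloor\beta n\rfloor)\) for \(n\equiv j\), \(\lfloor\beta n\rfloor\equiv m\) modulo \(2dq\), using the displayed identity, to identify the correct main term. Only then would I match this against the nilmanifold integral.
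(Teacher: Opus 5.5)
Your Steps 1 and 2 are fine and agree with the paper: multiplying \eqref{eqn2} through by \(dq\) removes the rational \(\beta^2\) contribution, so \(b\) need not vanish, but \(k_4\neq 0\) still gives \(t\ll\delta_2^{-O(1)}X^{-2}\).

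The gap is in Step 3, where you work on the wrong nilmanifold.
\begin{itemize}
\item The orbit \(\exp(\beta nY_1/q)\Gamma_2\) has Mal'cev coordinates \((\beta n/q,0)\). It therefore lies on the one-dimensional subtorus \(\{e^{t_1Y_1}\Gamma_2\}\) and is not equidistributed in \(G_2/\Gamma_2\) at all. The nontrivial character \(\eta(e^{t_1Y_1}e^{t_2Y_2})=t_2\) (that is, \(k_3=0\), \(k_4=1\)) vanishes identically on it.
\item Your reason for excluding \(k_4\neq 0\) does not apply. The \(k_4\)-component of \(\eta\circ\tilde g\) is zero, because the \(\beta tn^2\) term was moved into the smooth factor. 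Even for \(g_2\) itself, a bound \(t\ll\delta_2^{-O(1)}X^{-2}\) is just the major arc condition, not a contradiction.
\item The limit you propose to use is zero. \(e^{t_2Y_2}\) is central and \(F_1\) picks up exactly the factor \(e(t_2)\) from it, so \(\int_{G_2/\Gamma_2}F_2=0\) for every choice of translating elements. This is precisely the vanishing that Lemma \ref{totalequidg2} exploits, and your Step 2 relies on that lemma.
\end{itemize}
So your claim that ``the \(t_2\)-integral no longer kills it'' is false. If your equidistribution claim held, the sum would be negligible, contradicting the main term you are trying to prove.

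The missing idea is the paper's descent to the one-dimensional subnilmanifold \(G_3=\{e^{xY_1}:x\in\mathbb{R}\}\), \(\Gamma_3=\{e^{xY_1}:x\in\mathbb{Z}\}\). One factorises \(g_1=\varepsilon_3g_3\gamma_3\), where:
\begin{itemize}
\item \(g_3(n)=e^{(\beta n/q)Y_1}\), whose upper-right entry is \(b\beta^2n^2/(2q)\);
\item \(\varepsilon_3(n)\) carries the entries \(tn\) and \(-\beta tn^2\);
\item \(\gamma_3(n)\) carries the entries \(an/q\) and \(-b\beta^2n^2/(2q)\).
\end{itemize}
This \(\gamma_3\) is \(2dq\)-periodic modulo \(\Gamma_1\). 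The condition \(\gamma_3^{-1}\Gamma_3\gamma_3\subseteq\Gamma_1\) requires \(bq\) even, which is arranged by doubling \(a,b,q\); this is why \(m\) ranges modulo \(2q\).

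On \(G_3/\Gamma_3\) the only horizontal characters are \(e^{xY_1}\mapsto k_5x\). Hence total \(\delta_3\)-equidistribution of \(g_3\) follows from the non-Liouville property of \(\beta\) exactly as in Lemma \ref{equidg3}, and the limiting integral is the single integral over \(t_1\in[0,1)\). Your substitution \(qt_1=m+x\) then correctly completes the square, giving the stated average times \(\int_0^1e(bx^2/(2q))\,\mathrm{d}x\). The \(\beta^2j^2\) term comes directly from the upper-right entry of \(\gamma_3(j)\), so the identity for \(\{\beta n\}^2\) is not needed.
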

\begin{proof} See \cite{neale} or \cite{dask}.  For the convenience of the reader, we  explain  the modifications required to the method we have presented so far. For the analogue of Lemma \ref{delta2equid}, we instead use that \(\beta^2\in \mathbb{Q}\) and \(\Vert \alpha\Vert_{\mathbb{R}/\mathbb{Z}}/q\leq\Vert \alpha/q\Vert_{\mathbb{R}/\mathbb{Z}}\) for integers \(q\geq 1\) to effectively remove the contribution from the \(\beta^2\) term in \eqref{eqn1} and \eqref{eqn2}.    Crucially, we no longer deduce that \(b=0,\) and hence the major arcs are of a different shape.   

Working with  \(G_3=\left\{\begin{psmallmatrix}
1 & bx & \frac{bqx^2}{2}\\
0 & 1 & qx\\
0 & 0 & 1
\end{psmallmatrix}: x\in \mathbb{R}\right\}\) and  \(\Gamma_3=\left\{\begin{psmallmatrix}
1 & bx & \frac{bqx^2}{2}\\
0 & 1 & qx\\
0 & 0 & 1
\end{psmallmatrix}: x\in\mathbb{Z}\right\},\)
with Mal'cev basis given by
\(Z_1=\begin{psmallmatrix}
0 & b & 0\\
0 & 0 & q\\
0 & 0 & 0
\end{psmallmatrix},\)
we write
\begin{align*}g_1(n)=\begin{pmatrix}
1 & tn & -\beta n^2t\\
0 & 1 & 0\\
0 & 0 & 1
\end{pmatrix}\begin{pmatrix}
1 & \frac{b\beta n}{q} & \frac{b\beta^2n^2}{2q}\\
0 & 1 & \beta n\\
0 & 0 & 1
\end{pmatrix}\begin{pmatrix}
1 & \frac{an}{q} & -\frac{b\beta^2n^2}{2q}\\
0 & 1 & 0\\
0 & 0 & 1
\end{pmatrix}=\varepsilon_3(n)g_3(n)\gamma_3(n). 
\end{align*}
 We may choose \(d\) minimal (so that \(X\) is large in terms of  \(d\)) since  any positive integer multiple of this choice gives the same main term as in the lemma. We take the range \(0 \leq j<2dq\) because \(\gamma_3(n)\Gamma_1\) is `periodic' modulo \(2dq.\) To ensure that \(\gamma_3^{-1}\Gamma_3\gamma_3\subseteq\Gamma_1,\) we insist that \(bq\) is even (which was automatically true before because \(b=0\)) by working with the double of \(a,b,\) and \(q\) (just as we did in the proof of Lemma \ref{totalequidg2}). This factor of 2 cancels in the final main term, except for the range of \(m\) in the expectation which is now \(0\leq m<2q.\) The rest of the computation of \(\int_{G_3/\Gamma_3}F_3\) is similar to before, although one finds that there is an extra integral to evaluate by completing the square. This introduces an additional term in the expectation and the Fresnel integral given in the lemma. 
\end{proof}

\section{Positivity at major arcs}\label{sectionpositive}
We are now ready to prove Lemma \ref{nonnegative}, thereby proving Theorem \ref{mainthm1}. Let \(\delta_1,\delta_2,\delta_3\) satisfy \eqref{deltas}.
We have so far shown that for some small absolute constant \(c'>0,\) and for \(X\)  large in terms of \(\beta,\)  we have cancellation \[\left|\mathbb{E}_{n\in [X]}e(-\theta n \lfloor \beta  n\rfloor)\right|\leq X^{-c'/\mu},\]}in which case we are done, or  we have the major arc estimate in Lemma \ref{majorarcest}.
Consider the real part of the integral appearing in the major arc estimate
\[\Re\int_{0}^{X}e(-\beta v^2 t) ~\mathrm{d}v=\int_{0}^{X}\cos(2\pi|\beta t|v^2 ) ~\mathrm{d}v,\] with \(t\ll \delta_2^{-O(1)}X^{-2}.\) If \(|\beta t|X^2<1/100,\) then the integrand is \(\gg 1,\) and thus the integral is \(\gg X.\) If instead \(|\beta  t|X^2\geq 1/100\) then we substitute \(w=\sqrt{|\beta t|}v\) to obtain 
\[\frac{1}{\sqrt{|\beta t|}}\int_0^{\sqrt{|\beta t|}X}\cos(2 \pi w^2)  ~\mathrm{d}w\gg \frac{1}{\sqrt{|\beta t|}}\gg \frac{\delta_2^{O(1)}X}{\sqrt{|\beta|}}. \] Here we have used the fact that the integral is a Fresnel integral, and is known classically to be  \(\gg 1\) since the upper limit is at least \(1/100.\) 
Indeed,  
\[\int_0^{\sqrt{1/4}} \cos(2\pi w^2)~\mathrm{d}w\geq 0.38 \quad \textrm{and} \quad  \left| \int_{\sqrt{1/4}}^{\sqrt{3/4}} \cos(2\pi w^2)~\mathrm{d}w\right| \leq 0.23,\] and for all positive integers \(k,\) one can verify (e.g.~by substituting \(x=w^2\)) that the integral between the zeros \(\sqrt{(4k-1)/4}\) and \(\sqrt{(4k+1)/4}\) is at least as large as the absolute value of the integral between the zeros \(\sqrt{(4k+1)/4}\) and \(\sqrt{(4k+3)/4}.\) 

Therefore, by Lemma \ref{majorarcest},  we in fact deduce a major arc bound of
\[\Re \mathbb{E}_{n\in [X]}e(-\theta n \lfloor \beta  n\rfloor)\geq X^{-c'/\mu}\] (possibly upon adjusting \(c'\)) for  \(\theta\) of the form \eqref{theta2},
 which completes the proof of (slightly more precise versions of) Lemma \ref{nonnegative} and Theorem \ref{mainthm1}. 

\section{Bracket polynomials}\label{bracketpolysection}
In this section, we generalise the previous argument, replacing \(n\lfloor \beta n\rfloor\) with 
\[P(n)-Q(n)\lfloor R(n) \rfloor,\] where
\(Q,R\) are as in  Theorem \ref{polythm} (we initially handle the case  \(R^2\in \mathbb{L}'[n]\setminus\mathbb{Q}[n]\)), and \(P\in \mathbb{Q}[n]\) such that \(P:\mathbb{Z}\rightarrow \mathbb{Z}\) and \(\deg P\leq \deg Q, \deg R.\)
In fact, we eventually specialise to  \(P=-\ell Q\) for some \(\ell\in \mathbb{Z},\) but we choose to  work with the less restricted scenario until the very end in order to attain more general minor and major arc estimates. 
Let \(X\) be sufficiently large in terms of \(P,Q,\) and \(R.\)    Let \(\kappa_1,\kappa_2,\kappa_3>0\) be  constants which are sufficiently small  in terms of \(\deg Q,\deg R\) (which is needed because of  the parameters in Theorem \ref{GTequid}). Let \(\mu'\geq 1\) be an upper bound for all  the irrationality exponents of the coefficients of both \(R\) and \(R^2\) (which are all finite by assumption).  Once again, we assume that \(\kappa_1\) is sufficiently small in terms of \(\kappa_2,\) and that \(\kappa_2\) is sufficiently small in terms of \(\kappa_3.\) Let
 \begin{equation}\label{deltas2}\delta_1=X^{-\kappa_1/\mu'},\delta_2=X^{-\kappa_2/\mu'}, \textrm{ and } \delta_3=X^{-\kappa_3/\mu'}.\end{equation}
 As before, we take \(\theta\) to be real.
\subsection{Equidistribution in \texorpdfstring{$G_1/\Gamma_1$}{G1/Γ1}, II}
Let  \(\widetilde{g_1}(n)=\begin{psmallmatrix}
1 & \theta Q(n) & \theta P(n)\\
0 & 1 & R(n)\\
0 & 0 & 1
\end{psmallmatrix},\) and let \(F_1:G_1/\Gamma_1\rightarrow\mathbb{C}\) be as before.  The analogue of Lemma \ref{equidG1} is identical. The argument in Lemma \ref{delta1equid} gives
\[\Vert \eta \circ \widetilde{g_1}\Vert_{C^\infty[X]}\ll \delta_1^{-O(1)}\] where 
\[\eta(\widetilde{g_1}(n))=k_1\theta Q(n)+k_2R(n)\]
for integers \(k_1,k_2\ll \delta_1^{-O(1)}\) not both zero.
Write \(Q(n)=\sum_{i\geq 1}q_i\binom{n}{i}\) and \(R(n)=\sum_{i\geq 1}r_i\binom{n}{i}.\) Since \(R\not \in \mathbb{Q}[n],\) there is some \(i\) for which \(r_i\) is irrational. For the largest such \(i,\) by considering the coefficient of \(n^i\) we must have \(r_i\) also non-Liouville  with irrationality exponent at most \(\mu'\)  (because the irrationality exponent of a non-Liouville number does not change when multiplied by a non-zero rational or added to a rational). Because of such a coefficient \(r_i,\) we once again have \(k_1\neq 0.\) 
Therefore, for all \(i\geq 1,\) \begin{equation}\label{thetamulti} q_i\theta=\frac{a_i-k_2r_i}{k_1}+t_i'\end{equation} for some integers \(a_i\)  and  some real \(t_i'\ll\delta_1^{-O(1)}X^{-i}.\)
Writing \(a(n)=\sum_{i\geq 1} a_i \binom{n}{i}\) and \(t(n)=\sum_{i\geq 1} t_i' \binom{n}{i},\) we also see that 
\begin{equation}\label{thetapolyform}\theta Q(n)+\frac{k_2}{k_1}R(n)=\frac{1}{k_1}a(n)+t(n).\end{equation}

\subsection{Equidistribution in \texorpdfstring{$G_2/\Gamma_2$}{G2/Γ2}, II}\label{gamma2ii}
Let \(U=\deg Q,\) so that \(q_U\neq 0\) in \eqref{thetamulti}. Supposing \(\theta\) satisfies \eqref{thetamulti} and \eqref{thetapolyform}, we 
factorise
\begin{align*}\widetilde{g_1}(n)&=\begin{pmatrix}
1 & \theta Q(n) & \theta P(n)\\
0 & 1 & R(n)\\
0 & 0 & 1
\end{pmatrix}
\\&=\begin{pmatrix}
1 & \frac{-k_2R(n)+a(n)}{k_1}+t(n) & \frac{1}{q_U}\left(\frac{a_U-k_2r_U}{k_1}+t_U'\right)P(n)\\
0 & 1 &  R(n)\\
0 & 0 & 1
\end{pmatrix}
\\&=\begin{pmatrix}
1 & t(n) & \frac{t_U'P(n)}{q_U}\\
0 & 1 & 0\\
0 & 0 & 1
\end{pmatrix}\begin{pmatrix}
1 & \frac{-k_2 R(n)}{k_1} & \frac{-k_2r_U P(n)}{k_1q_U}-t(n)R(n)\\
0 & 1 & R(n)\\
0 & 0 & 1
\end{pmatrix}\begin{pmatrix}
1 & \frac{a(n)}{k_1} & \frac{a_U P(n)}{k_1q_U}\\
0 & 1 & 0\\
0 & 0 & 1
\end{pmatrix}\\
\\&=\widetilde{\varepsilon_2}(n)\widetilde{g_2}(n)\widetilde{\gamma_2}(n).
\end{align*}
Define \(G_2,\Gamma_2\) and Mal'cev basis elements \(Y_1,Y_2\) as before, but with the parameters \(b\) and \(q\) replaced with \(-k_2\) and \(k_1\) respectively. 
The analogue of Lemma \ref{totalequidg2} holds with some modifications which we now explain.  Our version of \eqref{conjugation}, showing that \(\widetilde{\gamma_2}(j)^{-1}\Gamma_2\widetilde{\gamma_2}(j)\subseteq \Gamma_1,\) follows from 
\begin{align*}
\begin{pmatrix}
1 & \frac{-a(j)}{k_1} & \frac{-a_U P(j)}{k_1q_U}\\
0 & 1 & 0\\
0 & 0 & 1
\end{pmatrix}\begin{pmatrix}
1 & -k_2x & y\\
0 & 1 & k_1x\\
0 & 0 & 1
\end{pmatrix}&\begin{pmatrix}
1 & \frac{a(j)}{k_1} & \frac{a_U P(j)}{k_1q_U}\\
0 & 1 & 0\\
0 & 0 & 1
\end{pmatrix}
= \begin{pmatrix}
1 & -k_2x & y-a(j)x\\
0 & 1 & k_1x\\
0 & 0 & 1\end{pmatrix}
\end{align*} and the fact that \(a(j)\) is an integer (and, as before, we may assume that \(k_2\) is even). Observe that \(\widetilde{\gamma_2}(n)\Gamma_1\) is `periodic' with period \(q^*\) defined as follows.
Let \(q^*\) be a positive integer such that \(a(n)/k_1=a^*(n)/q^*\) and \(a_U P(n)/(k_1q_U)=P^*(n)/q^*\) with \(a^*,P^*\in \mathbb{Z}[n].\)
If \(n\equiv j \pmod {q^*}\) then  \[\frac{a(n)}{k_1}=\frac{a^*(n)}{q^*}\equiv \frac{a^*(j)}{q^*}=\frac{a(j)}{k_1} \pmod 1,\] and similarly \[\frac{a_U P(n)}{k_1q_U}=\frac{ P^*(n)}{q^*}\equiv \frac{P^*(j)}{q^*}=\frac{a_U P(j)}{k_1q_U} \pmod 1.\] It follows that if \(n\equiv j \pmod {q^*}\) then \(F_1(\widetilde{\varepsilon_2}(n)\widetilde{g_2}(n)\widetilde{\gamma_2}(n)\Gamma_1)=F_1(\widetilde{\varepsilon_2}(n)\widetilde{g_2}(n)\widetilde{\gamma_2}(j)\Gamma_1).\) We again consider short  progressions \(P_{j,k}\) of length \(\asymp \delta_2 X,\) but this time with common difference \(q^*,\) with elements congruent to \(j\) modulo \(q^*.\) It is not too difficult to verify  that we may take \(q^*\ll \delta_1^{-O(1)}.\)

Furthermore, if we denote
\[\widetilde{E}=\max_{j,k}\left|\mathbb{E}_{n\in P_{j,k}}F_1(\widetilde{\varepsilon_2}(n)\widetilde{g_2}(n)\widetilde{\gamma_2}(j)\Gamma_1)-F_1(\widetilde{\varepsilon_2}(n_{j,k})\widetilde{g_2}(n)\widetilde{\gamma_2}(j)\Gamma_1)\right|,\] 
then as
\begin{align*}
\widetilde{\varepsilon_2}&(n_{j,k})\widetilde{g_2}(n)\widetilde{\gamma_2}(j)\\&=\begin{pmatrix}
1 & \frac{a(j)-k_2R(n)}{k_1}+t(n_{j,k})  & t(n_{j,k})R(n)-t(n)R(n)+\frac{t_U' P(n_{j,k})}{q_U}+\frac{a_U P(j)-k_2r_U P(n)}{k_1 q_U}\\
0 & 1 & R(n)\\
0 & 0 & 1
\end{pmatrix},
\end{align*}
we find, with \(P(n)=\sum_{i\geq 0}p_i\binom{n}{i},\) that a similar calculation to before gives
\begin{align*}
\widetilde{E}&\ll  \left|-t(n)\lfloor R(n) \rfloor +t(n_{j,k})\lfloor R(n)\rfloor-t(n_{j,k})R(n)+t(n)R(n)+\frac{t_U'(P(n)- P(n_{j,k}))}{q_U} \right|
\\ &\ll |\lfloor R(n) \rfloor -R(n)||t(n_{j,k})-t(n)|+|t_U'q_U^{-1}||P(n)-P(n_{j,k})|
\\& \leq \sum_{i\geq 1}(|t_i'|+|t_U' q_U^{-1} p_i|)\left|\binom{n_{j,k}}{i}-\binom{n}{i}\right|
\\ & \ll  \sum_{i\geq 1}(|t_i'|+|t_U'q_U^{-1} p_i|)\left|n_{j,k}-n\right|X^{i-1}
\\& \ll \delta_1^{-O(1)}\delta_2.
\end{align*}
In the last inequality we  used that \(\deg P \leq U.\) We thus establish a suitable variant of Lemma \ref{totalequidg2}.

Next, the analogue of Lemma \ref{delta2equid} gives \[\Vert \eta \circ \widetilde{g_2}\Vert_{C^\infty[X]} \ll \delta_2^{-O(1)}\] where
\[\eta(\widetilde{g_2}(n))=\frac{k_3}{k_1}R(n)+k_4\left(\frac{-k_2r_U}{k_1q_U}P(n)-t(n)R(n)+\frac{k_1 k_2}{2}\left(\frac{R(n)}{k_1}\right)^2\right)\]
for integers \(k_3,k_4\ll \delta_2^{-O(1)}\) and, as before, \(k_4\neq 0\) (by considering the specific \(r_i\) that we used to deduce   \eqref{thetamulti}). Write \(V=\deg R.\) Then the above smoothness condition and the fact that \(\deg P\leq V\) together imply that for all \(i> V\) we have \begin{equation}\label{tstar}\left\Vert k_4\left(-t_i^*+\frac{k_2}{2k_1}r_i^*\right)\right\Vert_{\mathbb{R}/\mathbb{Z}}\ll \frac{\delta_2^{-O(1)}}{X^i},\end{equation} where  \(t_i^*\) and \(r_i^*\) respectively represent the coefficients from
\[t(n)R(n)=\left(\sum_{i\geq 1}t_i'\binom{n}{i}\right)\left(\sum_{1\leq i\leq V}r_i\binom{n}{i}\right)=\sum_{ i\geq 1}t_{i}^*\binom{n}{i}\] and
\[R(n)^2=\left(\sum_{1\leq i\leq V}r_i\binom{n}{i}\right)^2=\sum_{ 1\leq i\leq 2V}r_{i}^*\binom{n}{i}.\]

In order to make use of \eqref{tstar}, we prove the following lemma.
\begin{lemma}[Irrational coefficients of high degree]\label{squarecoeff}  Suppose \(R,R^2\in \mathbb{R}[n]\setminus \mathbb{Q}[n]\) with \(R(0)=0\) and \(V=\deg R\geq 1.\)
 Then for some \(k>V\) we have \(r_k^*\in \mathbb{R}\setminus \mathbb{Q}.\) 
\end{lemma}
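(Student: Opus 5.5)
The plan is to argue by contradiction, after first moving from the binomial basis to the monomial basis. For the reduction, expand \(\binom{n}{i}\) in powers of \(n\). The change of basis between \((\binom{n}{i})_i\) and \((n^i)_i\) is upper triangular with rational entries and nonzero diagonal \(1/i!\).

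It follows that the binomial coefficient \(r_k^*\) of \(R^2\) is a rational linear combination of the monomial coefficients of \(R^2\) of degree \(\geq k\), and conversely. So all \(r_k^*\) with \(k>V\) are rational if and only if all coefficients of \(n^k\) in \(R^2\) with \(k>V\) are rational. We therefore suppose, for contradiction, that every monomial coefficient of \(R^2\) of degree greater than \(V\) is rational.

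Write \(R(n)=\sum_{1\leq i\leq V}c_in^i\) with \(c_V\neq 0\); there is no constant term because \(R(0)=0\). The top coefficient of \(R^2\) is \(c_V^2\), which lies in \(\mathbb{Q}\) by assumption (note \(2V>V\) since \(V\geq 1\)). Since \(c_V^2\in\mathbb{Q}\setminus\{0\}\), we have \(c_V^{-1}\mathbb{Q}=c_V\mathbb{Q}\).

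We then show by downward induction on \(j=1,\dots,V-1\) that \(c_{V-j}\in c_V\mathbb{Q}\). The coefficient of \(n^{2V-j}\) in \(R^2\) is
\[2c_Vc_{V-j}+\sum_{0<i<j}c_{V-i}c_{V-j+i}.\]
Since \(j\leq V-1\), the degree \(2V-j\) exceeds \(V\), so this coefficient is rational by assumption. By the inductive hypothesis, each product \(c_{V-i}c_{V-j+i}\) with \(0<i<j\) lies in \(c_V^2\mathbb{Q}=\mathbb{Q}\). Hence \(2c_Vc_{V-j}\in\mathbb{Q}\), and so \(c_{V-j}\in c_V^{-1}\mathbb{Q}=c_V\mathbb{Q}\).

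The range \(1\leq j\leq V-1\) covers exactly \(c_1,\dots,c_{V-1}\). These are all the coefficients of \(R\) apart from \(c_V\), because \(R(0)=0\) removes \(c_0\). The hypothesis \(R(0)=0\) is genuinely needed here: \(c_0\) would only be reached at degree \(V\), which the assumption does not control.

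Consequently \(R=c_VS\) for some \(S\in\mathbb{Q}[n]\). Then \(R^2=c_V^2S^2\in\mathbb{Q}[n]\), contradicting \(R^2\notin\mathbb{Q}[n]\). This contradiction gives some \(k>V\) with \(r_k^*\in\mathbb{R}\setminus\mathbb{Q}\).

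The hypothesis \(R\notin\mathbb{Q}[n]\) is not actually needed for this conclusion. The main point requiring care is the basis-change step: one must confirm that triangularity runs in the right direction, so that the binomial coefficients of index \(>V\) are determined by the monomial coefficients of degree \(>V\) alone. This holds because \(n^m\) expands only into \(\binom{n}{i}\) with \(i\leq m\), and \(\binom{n}{i}\) only into \(n^m\) with \(m\leq i\). The rest of the argument is the short induction above.
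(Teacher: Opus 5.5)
Your proof is correct and is essentially the paper's argument: pass from the binomial to the monomial coefficients of $R^2$, then recurse downward through the coefficients of $n^{2V},n^{2V-1},\dots,n^{V+1}$ to force every coefficient of $R$ into $c_V\mathbb{Q}$, which contradicts $R^2\notin\mathbb{Q}[n]$. The only differences are cosmetic: your single contradiction argument absorbs the paper's separate case $j_0<V$ (where $c_V$ is rational), and you justify the triangular basis change explicitly where the paper simply asserts it.
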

\begin{proof}
It is convenient to write \[R(n)^2=\left(\sum_{1\leq i\leq V}\rho_in^i\right)^2=\sum_{1\leq  i\leq 2V}\rho_i^*n^i.\]
Observe that if \(\rho_i^*\) is irrational then \(r_j^*\) is irrational for some \(j\geq i.\) Thus it suffices to show \(\rho_i^*\) is irrational for some \(i>V.\) We now have the formula
\[\rho_{V+j}^*=\rho_V\rho_j+\rho_{V-1}\rho_{j+1}+\dots +\rho_{j}\rho_{V}.\]
Let \(j_0=\max\{i:\rho_i\in\mathbb{R}\setminus \mathbb{Q}\},\) which is well-defined by the assumed property of \(R.\) 

If \(j_0<V\) then \(\rho_{V+j_0}^*\) is irrational by the above formula. Suppose instead that \(j_0=V,\) so that \(\rho_V\) is irrational.  We have 
\[\rho_{2V}^*=\rho_V^2.\] If \(\rho_V^2\) is irrational, then we are done. If \(\rho_V^2\) is rational, then from
\[\rho_{2V-1}^*=\rho_{V}\rho_{V-1}+\rho_{V-1}\rho_V,\] it follows that \(\rho_{2V-1}^*\) is irrational and we are done, or \(\rho_{V-1}=Q_{V-1}\rho_V\) for some \(Q_{V-1}\in \mathbb{Q}.\) By applying the same argument recursively, we see that at least one of \(\rho_{2V}^*,\dots,\rho_{V+1}^*\) is  irrational, or we have \(\rho_{i}=Q_i\rho_V,\) where \(Q_i\in \mathbb{Q}\) for all \(1\leq i\leq V\) (which covers all the coefficients of \(R\) because \(R(0)=0\)). However, the latter scenario implies \[R(n)^2=\rho_V^2\left(\sum_{1\leq i\leq V}Q_in^i\right)^2 \in \mathbb{Q}[n],\] contradicting the assumed property of \(R^2.\) 
\end{proof}
By Lemma \ref{squarecoeff},  there exists some irrational   coefficient \(r_{k}^*\) of \(R^2,\) with \(k>V.\)  For the largest such \(k,\) we see that \(r_k^*\) is also non-Liouville  with irrationality exponent at most \(\mu'\) (by considering the coefficient of \(n^k\) similarly to how we deduced  \eqref{thetamulti}).
 Since all \(t_i'\ll \delta_1^{-O(1)}X^{-1},\) and \(X\) is large in terms of the coefficients \(r_i,\) it follows that all \(t_i^*\ll\delta_1^{-O(1)}X^{-1}.\) Furthermore, from \eqref{tstar} and the triangle inequality, we have \[\left\Vert \frac{k_2 k_4}{2k_1}r_k^*\right\Vert_{\mathbb{R}/\mathbb{Z}}\ll\frac{\delta_2^{-O(1)}}{X^k}+\left\Vert k_4 t_k^*\right\Vert_{\mathbb{R}/\mathbb{Z}}=\frac{\delta_2^{-O(1)}}{X^k}+| k_4 t_k^*|\ll \frac{\delta_2^{-O(1)}}{X^k}+\frac{\delta_1^{-O(1)}\delta_2^{-O(1)}}{X},\] which contradicts the Diophantine  conditions on \(r_k^*\) as before, unless \(k_2=0.\) Therefore \(k_2=0.\) We then deduce from \eqref{tstar} that \(t_i^*\ll \delta_2^{-O(1)}X^{-i}\) for all \(i>V.\) By  considering the coefficient of largest degree in \(t(n)R(n)\) as the base case  (note that \eqref{thetamulti} implies \(t_i'=0\) when \(i>U\)) and working inductively, we obtain the estimate \(t_i'\ll\delta_2^{-O(1)}X^{-V-i}\) for all \(i\geq 1.\)

\subsection{Equidistribution in \texorpdfstring{$G_3/\Gamma_3$}{G3/Γ3}, II}\label{gamma3ii} We have thus far shown that our exponential sum is small unless
 \begin{equation}\label{majorarcsbracket}\theta=\frac{a}{q}+t\end{equation} for some  integers \(a,q\) with \(1\leq q\ll\delta_1^{-O(1)}\) and real \(t\ll \delta_2^{-O(1)}X^{-U-V},\) where \(U=\deg Q\) and \(V=\deg R.\)  
 
\begin{lemma}[Major and minor arc estimates for bracket polynomials, I]\label{majorbracketlemma} Let \(P,Q\in \mathbb{Q}[n]\) be such that \(P,Q:\mathbb{Z}\rightarrow \mathbb{Z}.\) Suppose \(R,R^2\in \mathbb{L}'[n]\setminus \mathbb{Q}[n].\) Assume \(\deg Q, \deg R \geq \max\{1,\deg P\}\) and \(P(0)=Q(0)=R(0)=0.\) Suppose  \(h\in\mathbb{Z}\) is positive such that \(hP,hQ\in\mathbb{Z}[n].\) 
Let \(\delta_1,\delta_2,\delta_3\) satisfy \eqref{deltas2} and let \(X\) be sufficiently large in terms of \(P,Q,\) and \(R.\) Assume \(\theta\in \mathbb{R}.\) Then 
\[\mathbb{E}_{n\in[X]} e(\theta(P(n)- Q(n)\lfloor R(n)\rfloor))\ll \delta_1^{1/2},\] 
or  \(\theta\) is of the form \eqref{majorarcsbracket}. Furthermore, for all \(\theta\) as in \eqref{majorarcsbracket} we have 
\begin{align*}&\mathbb{E}_{n\in[X]}e(\theta (P(n)-Q(n)\lfloor R(n)\rfloor))\\&=\mathbb{E}_{\substack{j~(\textup{mod } hq)\\ m~(\textup{mod } q)}}e\left(\frac{a}{q}\left(P(j)-Q(j)m\right)\right)\frac{1}{X}\int_0^Xe(tP(v)-tQ(v)R(v))~\mathrm{d}v +O(\delta_2^{-O(1)}\delta_3^{1/2}).\end{align*}
\end{lemma}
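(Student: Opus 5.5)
The first dichotomy is already essentially established by the preceding two subsections, so I will only assemble it. Given \(\theta\), either \((\widetilde{g_1}(n)\Gamma_1)_{n\in[X]}\) is \(\delta_1\)-equidistributed, in which case the analogue of Lemma \ref{equidG1} gives the bound \(\delta_1^{1/2}\). Note that \(F_1(\widetilde{g_1}(n)\Gamma_1)=e(\theta P(n)-\theta Q(n)\lfloor R(n)\rfloor)\). Otherwise \eqref{thetamulti} and \eqref{thetapolyform} hold, and the analogue of Lemma \ref{totalequidg2} shows the sum is \(\ll\delta_1^{-O(1)}\delta_2^{1/2}\) unless \(\widetilde{g_2}\) fails total \(\delta_2\)-equidistribution. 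That failure forces \(k_2=0\) and the bounds \(t_i'\ll\delta_2^{-O(1)}X^{-V-i}\), via Lemma \ref{squarecoeff}. With \(k_2=0\), \eqref{thetamulti} at \(i=U\) gives \(\theta=a_U/(k_1q_U)+t_U'/q_U\). Clearing the rational \(q_U\), this is of the form \eqref{majorarcsbracket} with \(q\ll\delta_1^{-O(1)}\) and \(t\ll\delta_2^{-O(1)}X^{-U-V}\). Since \(\delta_1^{-O(1)}\delta_2^{1/2}\le\delta_1^{1/2}\) after adjusting \(\kappa\)'s, this establishes the first claim, up to reducing \(a/q\) to lowest terms if desired (which is harmless).

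For the major arc estimate, I mimic Lemma \ref{majorarcest} with the factorisation
\[\widetilde{g_1}(n)=\begin{pmatrix}1&tQ(n)&tP(n)-tQ(n)R(n)\\0&1&0\\0&0&1\end{pmatrix}\begin{pmatrix}1&0&0\\0&1&R(n)\\0&0&1\end{pmatrix}\begin{pmatrix}1&\frac{aQ(n)}{q}&\frac{aP(n)}{q}\\0&1&0\\0&0&1\end{pmatrix}=\widetilde{\varepsilon_3}(n)\widetilde{g_3}(n)\widetilde{\gamma_3}(n).\]
Here \(\widetilde{g_3}\) lives in \(G_3\), taken with \(q\) as its parameter. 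The conjugation check \eqref{conjugation} still gives \(\widetilde{\gamma_3}^{-1}\Gamma_3\widetilde{\gamma_3}\subseteq\Gamma_1\), because the relevant entry \(aQ(j)/q\cdot qx\) is an integer. Moreover \(\widetilde{\gamma_3}(n)\Gamma_1\) is periodic modulo \(hq\), since \(hP,hQ\in\mathbb{Z}[n]\).

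The analogue of Lemma \ref{equidg3} holds. A character \(k_5x\) gives \(\eta(\widetilde{g_3}(n))=k_5R(n)/q\), whose top irrational coefficient is non-Liouville. This contradicts the smoothness bound for \(X\) large.

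I then partition \([X]\) into progressions \(P_{j,k}\) of difference \(hq\) and length \(\asymp\delta_3X\), and replace \(n\) by \(n_{j,k}\) in \(\widetilde{\varepsilon_3}\). The error is
\[\ll |t|\,|Q(n)-Q(n_{j,k})|\,X^{V}+|t|\,|P(n)-P(n_{j,k})|\ll |t|\,q\,\delta_3X^{U+V}\ll\delta_2^{-O(1)}\delta_3.\]
Applying total equidistribution to \(\Psi_3F_3\) then yields the integral
\[\int_0^1 e\Big(tP(n_{j,k})-tQ(n_{j,k})R(n_{j,k})+\tfrac{a}{q}\big(P(j)-Q(j)\lfloor qt_1\rfloor\big)\Big)\,\mathrm{d}t_1+O(\text{small}).\]
This needs Taylor's theorem again to discard terms like \(tQ(n_{j,k})(qt_1-\lfloor qt_1\rfloor)\), which are \(O(|t|qX^U)\). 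Splitting according to \(m=\lfloor qt_1\rfloor\) gives \(e(\cdot)\,\mathbb{E}_m e\big(\tfrac{a}{q}(P(j)-Q(j)m)\big)\). Averaging over \(j\) modulo \(hq\) and over \(k\), and converting the \(k\)-sum to \(\frac1X\int_0^X e(tP(v)-tQ(v)R(v))\,\mathrm{d}v\), produces the stated formula.

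The main obstacle is bookkeeping in the smooth part, where the phase \(t(P-QR)\) has derivative of size \(|t|X^{U+V-1}\ll\delta_2^{-O(1)}X^{-1}\). Both the replacement error and the sum-to-integral error are controlled by progression length times this derivative, giving \(\delta_2^{-O(1)}\delta_3\). A secondary difficulty is the Lipschitz norm of \(\Psi_3F_3\), which grows with \(q\) and the heights of the coefficients. This is only polynomial in \(\delta_1^{-1}\), hence absorbed into \(\delta_2^{-O(1)}\delta_3^{1/2}\).
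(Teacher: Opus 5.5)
Your proposal is correct and follows the paper's proof essentially step for step. It uses the same factorisation $\widetilde{\varepsilon_3}\widetilde{g_3}\widetilde{\gamma_3}$, the same conjugation and period-$hq$ checks, the same replacement of $n$ by a representative $n_{j,k}$ with Taylor bounds, and the same evaluation of $\int_{G_3/\Gamma_3}F_3$ by splitting on $m=\lfloor qt_1\rfloor$.

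Two cosmetic points. First, your displayed replacement error omits the term $tQ(n_{j,k})\bigl(R(n)-R(n_{j,k})\bigr)$; this is harmless, since your final bound $|t|q\delta_3X^{U+V}$ already covers it. Second, as the paper remarks, you should take $h$ minimal so that $h\ll_{P,Q}1$ in the error terms; this is legitimate because the main term is unchanged when $h$ is replaced by a multiple.
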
 
 \begin{proof}
By the work in Section \ref{bracketpolysection}, we assume \(\theta\) is of the form \eqref{majorarcsbracket} and factorise
\begin{align*}\widetilde{g_1}(n)&=\begin{pmatrix}
1 & \theta Q(n) & \theta P(n)\\
0 & 1 & R(n)\\
0 & 0 & 1
\end{pmatrix}
\\&=\begin{pmatrix}
1 &  \left(\frac{a}{q}+t\right)Q(n) & \left(\frac{a}{q}+t\right)P(n)\\
0 & 1 &  R(n)\\
0 & 0 & 1
\end{pmatrix}
\\&=\begin{pmatrix}
1 & tQ(n) & tP(n)- tQ(n)R(n)\\
0 & 1 & 0\\
0 & 0 & 1
\end{pmatrix}\begin{pmatrix}
1 & 0 & 0\\
0 & 1 & R(n)\\
0 & 0 & 1
\end{pmatrix}\begin{pmatrix}
1 & \frac{a}{q}Q(n) & \frac{a}{q}P(n)\\
0 & 1 & 0\\
0 & 0 & 1
\end{pmatrix}
\\&=\widetilde{\varepsilon_3}(n)\widetilde{g_3}(n)\widetilde{\gamma_3}(n).
\end{align*}
Define \(G_3,\Gamma_3\) and Mal'cev basis element \(Z_1\) as before (with parameter \(q\)).
One can check that \(\widetilde{\gamma_3}(j)^{-1}\Gamma_3\widetilde{\gamma_3}(j)\subseteq \Gamma_1\) still holds, using the fact that \(Q:\mathbb{Z}\rightarrow \mathbb{Z}.\) In Mal'cev coordinates \[F_3(e^{t_1Z_1}\Gamma_3)=F_1(\widetilde{\varepsilon_3}(n_{j,k})e^{t_1Z_1}\widetilde{\gamma_3}(j)\Gamma_1)\] is equal to  \begin{align*}&e\left(tQ(n_{j,k})qt_1+tP(n_{j,k})-tQ(n_{j,k})R(n_{j,k})+\frac{a}{q}P(j)-\left(\frac{a}{q}Q(j)+tQ(n_{j,k})\right)\lfloor qt_1\rfloor\right)
\\&=e\left(tP(n_{j,k})-tQ(n_{j,k})R(n_{j,k})+\frac{a}{q}P(j)-\frac{a}{q}Q(j)\lfloor q t_1\rfloor\right)+O\left(\delta_1^{-O(1)}\delta_2^{-O(1)}X^{-V}|t_1|\right).\end{align*}
The error term from  Taylor expanding is small enough to ignore, as we take \(0\leq t_1< 1.\)

As before, \((\widetilde{g_3}(n)\Gamma_3)_{n\in [X]}\) is totally \(\delta_3\)-equidistributed in \(G_3/\Gamma_3.\)
 Let \(q'\) be such that \(\widetilde{\gamma_3}(n)\Gamma_3\) is `periodic' with period \(q'\geq 1\) (see earlier period calculation). We may choose \(h\) minimal (so that \(X\) is large in terms of \(h\)) since  any positive integer multiple of this choice gives the same main term as in the lemma. Take \(q'=hq\) and take \(P_{j,k}\) to be short arithmetic progressions of length \(\lceil \delta_3 X\rceil\) as before, but with common difference \(q'\) and whose elements are all congruent to \(j\) modulo \(q'.\) 
One can bound the quantity
\[\widetilde{E'}=\max_{j,k}\left|\mathbb{E}_{n\in P_{j,k}}F_1(\widetilde{\varepsilon_3}(n)\widetilde{g_3}(n)\widetilde{\gamma_3}(j)\Gamma_1)-F_1(\widetilde{\varepsilon_3}(n_{j,k})\widetilde{g_3}(n)\widetilde{\gamma_3}(j)\Gamma_1)\right|,\] 
 ignoring the error term discussed above, by
\begin{align*}
&\ll \left|e\left(t(P(n)-Q(n)R(n))-t(P(n_{j,k})-Q(n_{j,k})R(n_{j,k}))\right)-1\right|
\\&\ll |t|\left|P(n)-Q(n)R(n)-P(n_{j,k})-Q(n_{j,k})R(n_{j,k})\right|
\\& \ll|t| |n-n_{j,k}|\left(X^{(\deg P)-1}+X^{U+V-1}\right)
\\&  \ll \delta_2^{-O(1)}X^{-U-V} \delta_3 X X^{U+V-1}\ll \delta_{2}^{-O(1)}\delta_3,
\end{align*}
where we have used that \(\deg P \leq U+V.\)
We integrate as before: \begin{align*}\int_{G_3/\Gamma_3}F_3&=\int_0^1  e\left(tP(n_{j,k})-tQ(n_{j,k})R(n_{j,k})+\frac{a}{q}P(j)-\frac{a}{q}Q(j)\lfloor q t_1\rfloor\right) ~\mathrm{d}t_1 
\\&= e\left(tP(n_{j,k})-tQ(n_{j,k})R(n_{j,k})\right)\mathbb{E}_{m~(\textrm{mod } q)} e\left(\frac{a}{q}(P(j)-Q(j))m)\right).\end{align*}

Taking the average over \(j\) and \(k\) as in Section \ref{sectiong3}, we similarly arrive at
\begin{align*}&\mathbb{E}_{n\in[X]}e(\theta (P(n)-Q(n)\lfloor R(n)\rfloor))\\&=\mathbb{E}_{\substack{j~(\textrm{mod } q')\\ m~(\textrm{mod } q)}}e\left(\frac{a}{q}\left(P(j)-Q(j)m\right)\right)\frac{1}{X}\int_0^Xe(tP(v)-tQ(v)R(v))~\mathrm{d}v +O(\delta_2^{-O(1)}\delta_3^{1/2})\end{align*}
as desired.
\end{proof}

 Although it is natural to include the \(e(tP(v))\) term in the above formula, in our case it makes a negligible  contribution by Taylor's theorem because  \(\deg P\) is small. 

\begin{lemma}[Major and minor arc estimates for bracket polynomials, II]\label{majorbracketlemmaii}
The result of Lemma \ref{majorbracketlemma} holds when \(R^2\in \mathbb{Q}[n]\) instead of \(R^2\in \mathbb{L}'[n]\setminus \mathbb{Q}[n],\) provided that \(P,Q,\) and \(R\) are not all constant multiples of one another, and all other conditions remain the same.
\end{lemma}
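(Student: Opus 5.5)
The plan is to rerun the three-stage argument of Section~\ref{bracketpolysection} unchanged, except at the one place where the hypothesis \(R^2\in \mathbb{L}'[n]\setminus\mathbb{Q}[n]\) was used. That place is Lemma \ref{squarecoeff} and the subsequent deduction that \(k_2=0\) in the \(G_2/\Gamma_2\) stage. The \(G_1/\Gamma_1\) stage only needs \(R\in\mathbb{L}'[n]\setminus\mathbb{Q}[n]\), which still holds. So we again obtain \(k_1\neq 0\), together with \eqref{thetamulti} and \eqref{thetapolyform}, and the factorisation \(\widetilde{\varepsilon_2}\widetilde{g_2}\widetilde{\gamma_2}\). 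The analogue of Lemma \ref{totalequidg2}, including the bound on \(\widetilde{E}\), also goes through verbatim. Likewise, the \(G_3/\Gamma_3\) stage and the integral computation in the proof of Lemma \ref{majorbracketlemma} never used irrationality of \(R^2\). They used only that \(\theta\) has the shape \eqref{majorarcsbracket}, that \(Q\) is integer-valued, and total equidistribution of \(R(n)/q\), which follows from irrationality of the top irrational coefficient of \(R\). Hence everything reduces to re-proving, under the new hypothesis, that a failure of total \(\delta_2\)-equidistribution of \(\widetilde{g_2}\) forces \(k_2=0\) and \(t_i'\ll\delta_2^{-O(1)}X^{-V-i}\).

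For that step I would use the hypothesis that \(P,Q,R\) are not all proportional in place of the Diophantine input on \(R^2\). Suppose \(k_2\neq 0\). From \(\eqref{thetapolyform}\), \(\theta Q\) equals \(-\frac{k_2}{k_1}R\) up to a rational polynomial plus the small polynomial \(t(n)\). Write \(R=\lambda R_0\) with \(R_0\in\mathbb{Q}[n]\), which is possible since \(R^2\in\mathbb{Q}[n]\) (compare the last step of the proof of Lemma \ref{squarecoeff}). Comparing coefficients in \eqref{thetamulti} then shows that \(Q\) and \(R\) are proportional up to rational and small corrections. If \(Q\) is not a rational multiple of \(R_0\), then some coefficient forces \(\lambda k_2/k_1\) to be close to a rational with small denominator, contradicting the non-Liouville property of the coefficients of \(R\); hence \(k_2=0\). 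If \(Q\) is a rational multiple of \(R_0\) but \(P\) is not proportional, I would instead exploit the \(P\)-term \(\frac{-k_2r_U}{k_1q_U}P(n)\) in \(\eta(\widetilde{g_2}(n))\). Its coefficients are irrational multiples of rationals, and they cannot be cancelled by the rational polynomial \(R^2\) term except along the direction of \(R_0^2\). So a coefficient of degree \(\leq V\) that is not aligned with \(R\) yields a non-Liouville obstruction via \(k_4\neq 0\).

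Once \(k_2=0\) is established, the inductive estimate on \(t_i'\) from the end of Section \ref{gamma2ii} applies unchanged. The major arcs are then \eqref{majorarcsbracket}, and the rest of the proof of Lemma \ref{majorbracketlemma} finishes the argument.

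The main obstacle is the case analysis forcing \(k_2=0\). When \(R^2\) is rational, the \(R^2\) contribution in \eqref{tstar} is harmless modulo \(1\) after multiplying by bounded denominators. This is exactly the mechanism that creates the \((a+b\beta)/q\) major arcs of Lemma \ref{majorarcrootestimate}. So nonproportionality must genuinely produce an irrational, non-Liouville coefficient in \(\eta\circ\widetilde{g_2}\) or \(\eta\circ\widetilde{g_1}\). I would try first to handle the case where \(Q\) is not a multiple of \(R\) directly at the \(G_1\) stage, by matching leading terms, and only then treat the \(P\) direction.
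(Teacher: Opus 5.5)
Your proposal is correct and follows essentially the same route as the paper. Assuming $k_2\neq 0$, comparing two instances of \eqref{thetamulti} forces $R$ to be a constant multiple of $Q$, because all $r_i$ are rational multiples of one quadratic irrational. Then the $G_2/\Gamma_2$ coefficient relation, with the now-rational $r_i^*$ discarded, forces $P$ to be a constant multiple of $R$, and the rest of the argument is unchanged.

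One wording correction in your $P$-step: the irrational coefficients of the $P$-term cannot be cancelled by the $R^2$ term at all, since that term is rational and drops out modulo small-denominator rationals. Only the $\frac{k_3}{k_1}R(n)$ term can cancel them, which is exactly why the conclusion is that $P$ is a multiple of $R$ (not anything along $R_0^2$).
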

\begin{proof}
Starting with the leading coefficient and working backwards (as in  the proof of Lemma \ref{squarecoeff}) we find that \(R/\beta\in\mathbb{Q}[n]\) for some \(\beta\in\mathbb{R}\setminus \mathbb{Q}\) such that \(\beta^2\in \mathbb{Q}.\) Note that such a \(\beta\) is non-Liouville with irrationality exponent \(2.\)

We modify the proof of Lemma \ref{majorarcsbracket}. By \eqref{thetamulti}, when \(q_i\neq 0\) we may write \[\theta=\frac{a_U-k_2r_U}{k_1q_U}+\frac{t_U'}{q_U}=\frac{a_i-k_2r_i}{k_1q_i}+\frac{t_i'}{q_i}.\]
Suppose \(k_2\neq 0.\) Since all \(r_i\) are rational multiples of \(\beta,\)  we find that  \(r_i/q_i=r_U/q_U\) is constant.  On the other hand, if \(q_i= 0\)  then by \eqref{thetamulti} we similarly deduce that   \(r_i=0.\) Thus, upon scaling \(\beta\) by a rational number if necessary, we deduce that \(R=\beta Q.\) 

Adapting the analogue of Lemma \ref{delta2equid} given in Section \ref{gamma2ii}, we may remove the contribution from \(r_i^*\) in \eqref{tstar} because \(r_i^*\in \mathbb{Q}\) and \(\Vert \alpha\Vert_{\mathbb{R}/\mathbb{Z}}/q\leq\Vert \alpha/q\Vert_{\mathbb{R}/\mathbb{Z}}\) for integers \(q\geq 1,\) and proceed similarly to before.  Moreover, recall that
\[\left\Vert\frac{k_3}{k_1}r_i+k_4\left(\frac{-k_2r_U}{k_1q_U}p_i-t_i^*+\frac{k_2}{2k_1}r_i^* \right)\right\Vert_{\mathbb{R}/\mathbb{Z}}\ll\frac{\delta_2^{-O(1)}}{X^i}.\]
Again \(r_i^*\in\mathbb{Q}\)  can effectively be ignored, and \(t_i^*\) is small and so is easily dealt with using the triangle inequality.  Since \(r_U/q_U=\beta\) and all \(r_i\) are rational multiples of \(\beta,\) we  arrive at \(P=k_3 R/(k_2k_4 \beta).\)

In other words, if  \(R=\beta Q\) holds for no \(\beta\in \mathbb{R}\) or if \(P= \beta' R\) holds for no \(\beta'\in \mathbb{R},\) then \(k_2=0\) and so our major arcs are of the claimed shape. 
The remainder of the argument is identical.
\end{proof}
The remaining case (which we include for interest only) is when \(R^2\in \mathbb{Q}[n],\)   \(R=\beta Q,\) and \(P=\beta' R\) for some \(\beta,\beta'\in \mathbb{R}.\) In this scenario  we find that we have major arcs at
\begin{equation}\label{majorarcsbracket2}\theta=\frac{a+b\beta}{q}+t\end{equation} for some integers \(a,b,q\) with \(b, q\ll\delta_1^{-O(1)},\) \(q\geq 1,\) and real \(t\ll \delta_2^{-O(1)}X^{-U-V},\)  where \(U=\deg Q\) and \(V=\deg R,\) and  the major arc estimate is the following.
\begin{lemma}[Major and minor arc estimates for bracket polynomials,  III] Let \(P,Q\in \mathbb{Q}[n]\) be such that \(P,Q:\mathbb{Z}\rightarrow \mathbb{Z}.\) Assume \(\deg Q\geq 1\) and \(Q(0)=0.\) Suppose \(\beta\in \mathbb{R}\setminus \mathbb{Q}\) satisfies \(\beta^2\in \mathbb{Q},\) and let \(R=\beta Q.\) Suppose \(f,g,h\in \mathbb{Z}\) with \(g,h\geq 1\) are such that \(P=fQ/g\) and \(hP,hQ,hR^2/2\in\mathbb{Z}[n].\)
Let \(\delta_1,\delta_2,\delta_3\) satisfy \eqref{deltas2} and let \(X\) be sufficiently large in terms of \(P,Q,\) and \(R.\) Assume \(\theta\in \mathbb{R}.\) Then 
\[\mathbb{E}_{n\in[X]} e(\theta(P(n)- Q(n)\lfloor R(n)\rfloor))\ll \delta_1^{1/2},\] or  \(\theta\) is of the form \eqref{majorarcsbracket2}.  Furthermore, for all \(\theta\) as in \eqref{majorarcsbracket2} we have 
\begin{multline*}
\mathbb{E}_{n\in[X]} e(\theta(P(n)- Q(n)\lfloor R(n)\rfloor))=\\ \mathbb{E}_{\substack{j~(\textup{mod } hq)\\ m~(\textup{mod } 2gq)}}e\left(\frac{a}{q}P(j)-\frac{a}{q}Q(j)m-\frac{b}{2q}R(j)^2-\frac{b}{2q}\left(m-\frac{f}{g}\right)^2\right)\\ \times \int_0^1 e\left(\frac{b}{2q}\left(x+\frac{f}{g}\right)^2\right)~\mathrm{d}x  \frac{1}{X}\int_{0}^{X}e( t(P(v)-Q(v)R(v)) ) ~\mathrm{d}v+O(\delta_2^{-O(1)}\delta_3^{1/2}).
\end{multline*}
\end{lemma}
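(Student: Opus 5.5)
The proof follows the three-stage template already used for Lemmas \ref{majorbracketlemma} and \ref{majorarcrootestimate}, with the $G_3$ step now carrying the extra $b$-term. The minor arc alternative is unchanged. By the $G_1$ analysis of Section \ref{bracketpolysection}, either the sum is $\ll\delta_1^{1/2}$ or \eqref{thetamulti} and \eqref{thetapolyform} hold. Since $R=\beta Q$ with $\beta^2\in\mathbb{Q}$, every $r_i=\beta q_i$. Hence \eqref{thetamulti} collapses to the single relation $\theta=(a+b\beta)/q+t$, with $q\asymp k_1 q_U$, $b\asymp -k_2$ (after clearing denominators) and $t\ll\delta_1^{-O(1)}X^{-U}$.

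Next I run the $G_2$ step of Section \ref{gamma2ii}. As in the proof of Lemma \ref{majorbracketlemmaii}, the contributions of the rational coefficients $r_i^*$ of $R^2$ are removed using $\Vert\alpha\Vert_{\mathbb{R}/\mathbb{Z}}/q\le\Vert\alpha/q\Vert_{\mathbb{R}/\mathbb{Z}}$. This time we cannot force $k_2=0$; what we extract is only the improved bound on $t$. The estimate \eqref{tstar} together with the induction on the top coefficient of $t(n)R(n)$ gives $t\ll\delta_2^{-O(1)}X^{-U-V}$, which yields \eqref{majorarcsbracket2}.

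For the major arc estimate, I factorise as in Lemma \ref{majorarcrootestimate}:
\[
\widetilde{g_1}(n)=\begin{pmatrix}1&tQ&tP-tQR\\0&1&0\\0&0&1\end{pmatrix}\begin{pmatrix}1&\frac{b}{q}R&\frac{b}{2q}R^2\\0&1&R\\0&0&1\end{pmatrix}\begin{pmatrix}1&\frac aqQ&\frac aqP-\frac{b}{2q}R^2\\0&1&0\\0&0&1\end{pmatrix}.
\]
Here $R(n)=\beta Q(n)$, $\theta Q=\frac{a}{q}Q+\frac{b}{q}R+tQ$, and $P=fQ/g$ lets the $\theta P$ entry be split compatibly. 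The middle factor lies in $G_3=\{(bx,\,bqx^2/2,\,qx)\}$ with $g_3(n)=e^{(R(n)/q)Z_1}$. The right factor is periodic modulo $hq$ in $\Gamma_1$, because $hP,hQ,hR^2/2\in\mathbb{Z}[n]$; doubling $a,b,q$ where necessary ensures $bq$ is even and $\gamma_3^{-1}\Gamma_3\gamma_3\subseteq\Gamma_1$.

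Total $\delta_3$-equidistribution of $g_3$ follows as in Lemma \ref{equidg3}. A character $k_5x$ would force $\Vert k_5 r_V/q\Vert_{\mathbb{R}/\mathbb{Z}}$ to be small, contradicting the fact that $r_V$ is a rational multiple of $\beta$, which is non-Liouville. The smooth-part error $\widetilde{E'}\ll\delta_2^{-O(1)}\delta_3$ is then handled exactly as in Lemma \ref{majorbracketlemma}, and so is the conversion of the average over $k$ of $e(t(P-QR)(n_{j,k}))$ into $\frac1X\int_0^X$.

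The main obstacle is evaluating $\int_{G_3/\Gamma_3}F_3$. In Mal'cev coordinate $t_1$, the function $F_3$ equals, up to the negligible Taylor error,
\[
e\Bigl(\tfrac{bq t_1^2}{2}+\tfrac aqP(j)-\tfrac{b}{2q}R(j)^2-\bigl(\tfrac aqQ(j)+bt_1\bigr)\lfloor qt_1\rfloor\Bigr),
\]
with $R(j)$ read modulo the period. I then substitute $qt_1=m+x$ with $0\le m<q$ (or $m<2gq$ after the doublings) and $x\in[0,1)$. This turns $\tfrac{bqt_1^2}{2}-bt_1\lfloor qt_1\rfloor$ into $\tfrac{b}{2q}(x^2-m^2)$.

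The shift by $f/g$ comes from the $P$-entry. Writing $\frac aqP(j)=\frac{a}{q}\frac fgQ(j)$ and tracking how the $\frac{b}{q}R$ coordinate interacts with the floor, I complete the square in $x$. This should produce the Fresnel factor $\int_0^1e\bigl(\tfrac{b}{2q}(x+f/g)^2\bigr)\,\mathrm dx$, while the $m$-dependence gives $-\tfrac{b}{2q}(m-f/g)^2$. The range $m \bmod 2gq$ is needed so that this expression is $1$-periodic in $m$. I expect the bookkeeping of these $f/g$ shifts and the doublings to be the delicate step, and I would check it against the case $f=0$, $g=1$, which must recover Lemma \ref{majorarcrootestimate}.
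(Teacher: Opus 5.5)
Your factorisation of $\widetilde{g_1}(n)$ is wrong as soon as $bf\neq0$, and this is exactly where the new content of this lemma lies. Multiplying your three matrices gives the top-right entry $tP+\frac{b}{2q}R^2+\frac aqP-\frac{b}{2q}R^2=(\frac aq+t)P$. It should be $\theta P$, so the term $\frac{b\beta}{q}P(n)=\frac{bf}{gq}R(n)$ has been dropped. This term is neither small nor periodic, so it must go into the middle factor. That factor then becomes $\begin{psmallmatrix}1&\frac bqR&\frac{b}{2q}R^2+\frac{bf}{gq}R\\0&1&R\\0&0&1\end{psmallmatrix}$, which does not lie in your $G_3=\{(bx,\,bqx^2/2,\,qx)\}$. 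Enlarging $G_3$ by the central direction does not help: the sequence would still lie on the proper one-parameter subgroup described next, so it is not equidistributed there.

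The paper instead uses the one-parameter group generated by $Z_1=\begin{psmallmatrix}0&bg&bf\\0&0&gq\\0&0&0\end{psmallmatrix}$. Its elements $e^{xZ_1}$ have entries $bgx$, $\frac{bg^2qx^2}{2}+bfx$ and $gqx$, and $g_3(n)=e^{(R(n)/(gq))Z_1}$. The scaling by $g$ is what makes $e^{xZ_1}$ integral for $x\in\mathbb{Z}$ (given $bq$ even), so that $\gamma_3^{-1}\Gamma_3\gamma_3\subseteq\Gamma_1$.

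Consequently your account of the $f/g$ shift cannot work. The term $\frac aqP(j)$ does not depend on the Mal'cev coordinate, so it cannot shift the Fresnel variable; it simply survives as the phase $\frac aqP(j)$ in the main term. With your $G_3$, the substitution $qt_1=m+x$ yields only $\frac{b}{2q}(x^2-m^2)$, which gives the $f=0$ main term.

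In the correct coordinates, $F_3$ contains the extra linear term $bft_1$. Substituting $gqt_1=m+x$ with $0\le m<gq$ gives
$\frac{b}{2q}(x^2-m^2)+\frac{bf}{gq}(m+x)=\frac{b}{2q}(x+\frac fg)^2-\frac{b}{2q}(m-\frac fg)^2$.
This produces both the shifted Fresnel integral and the shifted quadratic in $m$, with $m$ running modulo $gq$, or $2gq$ after doubling, rather than modulo $q$. So the step you flagged as delicate bookkeeping is in fact a missing ingredient.

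The following parts of your proposal are fine as they stand:
\begin{itemize}
\item the minor-arc reduction;
\item the $G_2$ step giving $t\ll\delta_2^{-O(1)}X^{-U-V}$;
\item the total equidistribution of $g_3$;
\item the smooth-part error and the sum-to-integral conversion.
\end{itemize}
They carry over once you use the corrected $G_3$ and $g_3$.
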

\begin{proof} We adapt the proofs of Lemma \ref{majorbracketlemma} and Lemma \ref{majorbracketlemmaii} as follows. As in the proof of Lemma \ref{majorbracketlemmaii}, we deduce that major arcs take the form of \eqref{majorarcsbracket2}.  We factorise
\begin{align*}\widetilde{g_1}(n)
&=\begin{pmatrix}
1 & tQ(n) & tP(n)- tQ(n)R(n)\\
0 & 1 & 0\\
0 & 0 & 1
\end{pmatrix}\begin{pmatrix}
1 & \frac{b\beta}{q}Q(n) & \frac{b}{2q}R(n)^2+\frac{b\beta}{q}P(n)\\
0 & 1 & R(n)\\
0 & 0 & 1
\end{pmatrix}\\ &\quad \times\begin{pmatrix}
1 & \frac{a}{q}Q(n) & \frac{a}{q}P(n)-\frac{b}{2q}R(n)^2\\
0 & 1 & 0\\
0 & 0 & 1
\end{pmatrix}
\\&=\widetilde{\varepsilon_3}(n)\widetilde{g_3}(n)\widetilde{\gamma_3}(n).
\end{align*}
Consider  \(G_3=\left\{\begin{psmallmatrix}
1 & bgx & \frac{bg^2qx^2}{2}+bfx\\
0 & 1 & gqx\\
0 & 0 & 1
\end{psmallmatrix}: x\in \mathbb{R}\right\}\) and  \(\Gamma_3=\left\{\begin{psmallmatrix}
1 & bgx & \frac{bg^2qx^2}{2}+bfx\\
0 & 1 & gqx\\
0 & 0 & 1
\end{psmallmatrix}: x\in\mathbb{Z}\right\},\)
with Mal'cev basis given by
\(Z_1=\begin{psmallmatrix}
0 & bg & bf\\
0 & 0 & gq\\
0 & 0 & 0
\end{psmallmatrix}.\)   We may choose \(g\) and \(h\) minimal (so that \(X\) is large in terms of \(g\) and \(h\)) since, as before,  any positive integer multiples would give the same main term as in the lemma.
The computation of the major arc estimate is then essentially a combination of those given in Lemma \ref{majorarcrootestimate} and Lemma \ref{majorbracketlemma}.
\end{proof}
 
\subsection{Positivity at major arcs, II}
Consider the major arc approximation from Lemma \ref{majorbracketlemma} and Lemma \ref{majorbracketlemmaii}. 
  On evaluating the average over \(j\) and \(m,\) we have \[\mathbb{E}_je\left(\frac{a}{q}P(j)\right)\mathbb{E}_m e\left(-\frac{a}{q}Q(j)m\right)=\mathbb{E}_je\left(\frac{a}{q}P(j)\right)1_{aQ(j)\in q\mathbb{Z}}=\mathbb{E}_{j}1_{aQ(j)\in q\mathbb{Z}}\geq \frac{1}{hq},\] where the first equality comes from summing a geometric series and the fact that \(Q:\mathbb{Z}\rightarrow \mathbb{Z},\)  where the second equality is deduced from our assumption that \(P=-\ell Q\) for some \(\ell \in \mathbb{Z},\) and where the inequality holds because \(Q(0)=0.\)

  For the integral, note that \(e(tP(v)-tQ(v)R(v))=e(sv^{U+V})+O(\delta_2^{-O(1)}X^{-1})\) for \(s=-tq_Ur_V.\) By a similar analysis to before, for all integers  \(d\geq 2,\) we have 
 \[\Re\frac{1}{X}\int_0^Xe(sv^d)~\mathrm{d}v=\frac{1}{X}\int_0^X\cos(2\pi |s|v^d)~\mathrm{d}v\gg \delta_2^{O(1)},\] where the only new information needed is that
 \[\int_0^{(1/4)^{1/d}} \cos(2\pi w^d)~\mathrm{d}w \geq \int_0^{\sqrt{1/4}} \cos(2\pi w^2)~\mathrm{d}w \geq  0.38 \] and \[ \left| \int_{(1/4)^{1/d}}^{(3/4)^{1/d}} \cos(2\pi w^2)~\mathrm{d}w\right| \leq \left(\frac{3}{4}\right)^{1/d}-\left(\frac{1}{4}\right)^{1/d}\leq \sqrt{\frac{3}{4}}-\sqrt{\frac{1}{4}}\leq 0.37.\]
 
 Hence  there is a small absolute constant \(c'>0\) depending only on \(U\) and \(V\)   such that   for all  \(X\)  large enough in terms of \(Q,R,\) and \(\ell,\) we have
 \[|\mathbb{E}_{n\in[X]}e(-\theta Q(n)\lfloor R(n)+\ell\rfloor)|\leq X^{-c'/\mu'},\] or  \(\theta\) is of the form \eqref{majorarcsbracket} which implies
\[ \Re \mathbb{E}_{n\in[X]}e(-\theta Q(n)\lfloor R(n)+\ell\rfloor)\geq X^{-c'/\mu'}.\]  From this we deduce (a slightly more precise version of) Theorem \ref{polythm}.
    
\bibliography{main}
\bibliographystyle{amsalpha}

\end{document}